\title{\vspace{-0.7cm}Random-Player Maker-Breaker games}
\author{ Michael Krivelevich \thanks{School of Mathematical Sciences,
Raymond and Beverly Sackler Faculty of Exact Sciences, Tel Aviv
University, Tel Aviv, 6997801, Israel. Email:
krivelev@post.tau.ac.il. Research supported in part by USA-Israel
BSF Grant 2010115 and by grant 912/12 from the Israel Science
Foundation.} \and Gal Kronenberg
\thanks{School of Mathematical Sciences, Raymond and Beverly Sackler Faculty of Exact Sciences, Tel Aviv University, Tel Aviv, 6997801, Israel. Email: galkrone@mail.tau.ac.il.}}

\documentclass[12pt]{article}
\usepackage{amsmath,amssymb,latexsym,color,epsfig,enumerate,a4,dsfont,xcolor,graphicx,pict2e,tikz,caption}

\newif\ifnotesw\noteswtrue


\parindent 0in
\parskip 2mm

\addtolength{\textwidth}{1.2in}
\addtolength{\oddsidemargin}{-0.4in}
\addtolength{\evensidemargin}{-0.4in}
\addtolength{\topmargin}{-0.5in}
\addtolength{\textheight}{0.4in}

\newtheorem{theorem}{Theorem}[section]
\newtheorem{lemma}[theorem]{Lemma}
\newtheorem{claim}[theorem]{Claim}

\newtheorem{observation}[theorem]{Observation}
\newtheorem{corollary}[theorem]{Corollary}

\newcommand{\Bin}{\ensuremath{\textrm{Bin}}}

\newenvironment{proof}{\noindent{\bf Proof\,}}{\hfill$\Box$}

\begin{document}
\maketitle

\begin{abstract}
	In a $(1:b)$ Maker-Breaker game, one of the central questions is to find the maximal value of $b$ that  allows Maker to win the game (that is, the \emph{critical bias} $b^*$). Erd\H{o}s conjectured that the critical bias for many Maker-Breaker games played on the edge set of $K_n$ is the same as if both players claim edges randomly. Indeed, in many Maker-Breaker games, ``Erd\H{o}s Paradigm" turned out to be true. Therefore, the next natural question to ask is the (typical) value of the critical bias for Maker-Breaker games where only one player claims edges randomly. A random-player Maker-Breaker
	game is a two-player game, played the same as an ordinary (biased) Maker-Breaker
	game, except that one player plays according to his best strategy and claims one element in each round, while the other plays randomly and claims $b$ (or $m$) elements. In fact, for every (ordinary) Maker-Breaker game, there are two different random-player versions; the $(1:b)$ random-Breaker game and the $(m:1)$ random-Maker game. We analyze the
	random-player version of several classical Maker-Breaker games such as the
	Hamilton cycle game, the perfect-matching game and the $k$-vertex-connectivity game (played on the edge set of $K_n$). For each of these games we find or estimate the asymptotic values of the bias (either $b$ or $m$) that allow each player to typically win the game. In fact, we provide the ``smart" player with an explicit winning strategy for the corresponding value of the bias.

\end{abstract}

\section{Introduction}
Let $X$ be a finite set and let ${\mathcal F} \subseteq 2^X$ be a
family of subsets. In the $(a:b)$ Maker-Breaker game ${\mathcal F}$,
two players, called Maker and Breaker, take turns in claiming
previously unclaimed elements of $X$. The
set $X$ is called the \emph{board} of the game and the members of
${\mathcal F}$ are referred to as the \emph{winning sets}. Maker
claims $a$ board elements per round, whereas Breaker claims $b$
elements. The parameters $a$ and $b$ are called the \emph{bias} of
Maker and of Breaker, respectively. We assume that Breaker moves
first. Maker wins the game as soon as he occupies all elements of
some winning set. If Maker does not fully occupy any winning set by
the time every board element is claimed by either of the players,
then Breaker wins the game. We say that the $(a:b)$ game ${\mathcal
	F}$ is \emph{Maker's win} if Maker has a strategy that ensures his
victory against any strategy of Breaker, otherwise the game is
\emph{Breaker's win}. The most basic case is $a=b=1$, the so-called
\emph{unbiased} game, while for all other choices of $a$ and $b$ the
game is called \emph{biased}.

It is natural to play Maker-Breaker games on the edge set of a graph
$G=(V,E)$. In this case, $X=E$ and the winning sets are all edge
sets of subgraphs of $G$ which possess some given  graph property
$\mathcal P$. In this case, we refer to this game as the $(a:b)$
$\mathcal P$-game. In the special case where $G=K_n$ we denote
$\mathcal P_n:=\mathcal P(K_n)$. In the {\em connectivity game},
Maker wins if and only if his edges contain a spanning tree of $G$.
In the \emph{perfect-matching} game the winning sets are all sets containing
$\lfloor |V(G)|/2 \rfloor$ independent edges of $G$. Note that if
$|V(G)|$ is odd, then such a matching covers all vertices of $G$ but
one. In the \emph{Hamiltonicity game} the winning sets are all edge
sets containing a Hamilton cycle of $G$. 
In the \emph{$k$-connectivity game} the winning sets are all edge sets of
$k$-vertex-connected spanning subgraphs of $G$.

Playing unbiased Maker-Breaker games on the edge set of $K_n$ is
frequently in a favor of Maker. For example, it is easy to see (and
also follows from \cite{Lehman}) that for every $n\geq 4$, Maker can
win the unbiased connectivity game in $n-1$ moves (which is clearly
also the fastest possible strategy). Other unbiased games played on
$E(K_n)$ like the perfect-matching game, the Hamiltonicity game, the
$k$-vertex-connectivity game and the $T$-game where $T$ is a given
spanning tree with bounded maximum degree, are also known to be easy wins for Maker (see e.g., \cite{CFGHL,FH,HKSS2009b}). It is thus
natural to give Breaker more power by allowing him to claim $b>1$
elements in each turn.

Given a monotone increasing graph property $\mathcal P$, it is easy
to see that the Maker-Breaker game $\mathcal P(G)$ is \emph{bias
	monotone}. That is, none of the players can be harmed by claiming
more elements. Therefore, it makes sense to study $(1:b)$ games and
the parameter $b^*$ which is the \emph{critical bias} of the game,
that is, $b^*$ is the maximal bias $b$ for which Maker wins the
corresponding $(1:b)$ game $\mathcal F$.

The most fundamental question in the field of Maker-Breaker games is finding the value of the critical bias $b^*$. Erd\H{o}s suggested the following (rather unexpected) approach which has become known as the ``probabilistic intuition" or the ``Erd\H{o}s Paradigm". Consider the $(1:b)$ Maker-Breaker game $\mathcal P_n$ where $\mathcal P$ is a monotone graph property. Then according to the intuition, the maximum value of $b$ which allows Maker to win the game playing according to his best strategy should be approximately the same as the maximum value of $b$ for which Maker is the typical winner where both players play \emph{randomly}. Observe that if indeed both players play randomly, then the resulting graph (of Maker) is distributed according to the well-known random graph model $\mathcal {G}(n,m)$ for $m\sim \frac {\binom n2}{b+1}$, and thus we can estimate the value of $b^*$, according to the Erd\H{o}s Paradigm, by having the value of the threshold function $m^*$ for the same graph property $\mathcal P$. 

In several important Maker-Breaker games Erd\H{o}s Paradigm turned out to be true.  
 For example, Chv\'atal and Erd\H{o}s \cite{CE} showed
that for every $\varepsilon >0$, playing with bias
$b=\frac{(1+\varepsilon)n}{\ln n}$, Breaker can isolate a vertex in
Maker's graph while playing on the board $E(K_n)$. It thus follows
that with this bias, Breaker wins every game for which the winning
sets consist of subgraphs of $K_n$ with positive minimum degree, and
therefore, for each such game we have that $b^*\leq
\frac{(1+o(1))n}{\ln n}$. Much later, Gebauer and Szab\'o showed in
\cite{GS} that the critical bias for the connectivity game played on
$E(K_n)$ is indeed asymptotically equal to $\frac n{\ln n}$. In a
relevant development, the first author of this paper proved in
\cite{Ham} that the critical bias for the Hamiltonicity game is
asymptotically equal to $\frac{n}{\ln n}$ as well. Indeed, the critical bias in all of the above results (and more) corresponds to the threshold function for the same properties in the random graph model (see e.g., \cite{Bol}). We refer the
reader to \cite{Beck,PG} for more background  on the Erd\H{o}s Paradigm, positional games in
general and Maker-Breaker games in particular.

This probabilistic intuition relates the fields of positional games and random graphs.
Therefore, it is natural to study Maker-Breaker games that involve randomness.
One such version of Maker-Breaker games is the random-turn Maker-Breaker
games. A \emph{$p$-random-turn Maker-Breaker game} is the same as an
ordinary Maker-Breaker game, except that instead of alternating
turns, before each turn a biased coin is being tossed and Maker
plays this turn with probability $p$ independently of all other
turns. Maker-Breaker games under this setting were 
considered in \cite{PSSW} and in \cite{RandomTurn}.

In this paper we consider a different (randomized) version of Maker-Breaker
games. Since the Erd\H{o}s Paradigm relates biased Maker-Breaker games with biased Maker-Breaker games where both players play randomly, a natural question  to ask is how the critical bias changes when \emph{only one} player plays randomly.  In the  $(m:b)$ \emph{random-player Maker-Breaker game} one of the
players plays according to his best
strategy and claims exactly one element in each round, while the other player claims in every round
$b$ elements, if he is Brekaer (or $m$ if he is Maker), uniformly among all unclaimed edges.

Clearly, this (random) version of the bias Maker-Breaker games is also bias monotone. More explicitly, if for some bias $b$, it is known that Breaker w.h.p.\ wins the $(1:b)$ random-Breaker game with respect to some monotone increasing graph property $\mathcal P$, then w.h.p.\ Breaker will also win the $(1:(b+1))$ random-Breaker game (a similar statement holds for the random-Maker game). Therefore, it makes sense to study $(1:b)$ (respectively, $(m:1)$) games and
the parameter $b^*$ (respectively, $m^*$) which is the \emph{critical bias} of the game,
that is, the maximal bias for which the ``smart" player  w.h.p.\ (see Section 1.1 for a formal definition of this notion) wins the
corresponding random-player game $\mathcal F$.

Maker-Breaker games for which one player plays randomly have already been implicitly discussed: Bednarska  and  \L uczak \cite{BL} showed that in the $(1:b)$ (ordinary) Maker-Breaker games on $E(K_n)$, where Maker's goal is to build a copy of a fixed graph $H$, the ``random strategy" is nearly optimal for Maker. In particular, they proved that if $H$ is a fixed graph with at least 3 non-isolated vertices, then the critical bias for the $H$-game is $b^*=\Theta (n^{1/m(H)})$, where $m(H)=\max _{\overset{H'\subseteq H}{v(H')\geq 3}}\{\frac {e(H')-1}{v(H')-2}\}$. In fact, they showed that if in each turn, Maker claims one element \emph{randomly} and $b\leq cn^{1/m(H)}$ for some constant $c>0$, then Maker is the typical winner of the game and therefore there exists a deterministic winning strategy for Maker for these values of $b$. 

In this paper, we study the critical bias of the random-player version for some well-known Maker-Breaker games played on the edge set of a complete graph. Furthermore, in cases where the typical winner is ``smart", we present strategies that w.h.p.\ are winning strategies for the game. We do this for both random-Maker and random-Breaker games.

 In the $(1:b)$ random-Breaker game, $(X,\mathcal F)$, there are two players, Maker and Breaker. In each round, Breaker claims $b$ elements from the board, chosen independently uniformly at random among all unclaimed elements, and then Maker claims one element from the board (according to his best strategy). Clearly, the critical bias for the random-Breaker games in bounded from below by the critical bias in the ordinary Maker-Breaker games. We also have a trivial upper bound for the value of $b^*$ which is the value of $b$ that, independent of the course of the game, does not allow Maker's  graph to achieve the desired property, due to trivial graph-theoretic reasons.  For example, in the Hamiltonicity game, Maker needs at least $n$ edges in his graph and thus $b^*\leq \frac n2$. We show that in the random-Breaker games, the upper bound is actually the correct value of $b^*$ in several well-studied games played on the edge set of $K_n$. 
 
 Let $\mathcal{H}_n$ be the game played on $E(K_n)$ where
 Maker's goal in to build a Hamilton cycle. The following theorem states that if Breaker is the random player and claims $b\leq(1-\varepsilon)\frac n2$ edges in each round, then Maker can typically win the Hamiltonicity game. Clearly this result is asymptotically tight, since for $b\geq(1+\varepsilon)\frac n2$, after claiming all the edges in the graph, Maker has less than $n$ edges.

 \begin{theorem}\label{thm:RBHam}
 	Let $\varepsilon>0$, let $n$ be an integer and let
 	$b\leq(1-\varepsilon)\frac n2$. Then Maker has a 
 	strategy which is w.h.p.\ a winning strategy for the random-Breaker $(1:b)$ $\mathcal{H}_n$ game in $(1+o(1))n$
 	rounds.
 \end{theorem}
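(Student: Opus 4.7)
Maker adopts a P\'osa-type rotation-extension strategy modeled on the proof in \cite{Ham}. Throughout the game he maintains his current longest path $P_i$ together with a small collection of auxiliary (``chord'') edges in his graph $G_i$. In each round he tries, in order of preference: (i) to extend $P_i$ by claiming an unclaimed edge from an endpoint of $P_i$ to a vertex outside $V(P_i)$; (ii) to extend the longest path of $G_i$ by claiming an unclaimed \emph{booster} --- an edge whose addition to $G_i$ either lengthens the longest path or closes a Hamilton cycle --- which is located via rotations on $P_i$; or (iii) to claim an auxiliary chord that enlarges the set of rotations available in later rounds. After $n-1 + o(n)$ such rounds he has a Hamilton path, which he then closes into a Hamilton cycle in $o(n)$ more rounds by an analogous rotation argument.

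The entire analysis hinges on a uniform expansion property of the graph $U_t$ of unclaimed edges: with probability $1-o(1)$, for every round $t \le T := (1+o(1))n$ and every $S \subseteq V(K_n)$ with $|S| \le n/4$, the number of edges of $U_t$ with exactly one endpoint in $S$ is at least $c\varepsilon n |S|$, for some absolute constant $c>0$. Since Breaker's edges are a uniformly random subset of the edges not yet claimed by Maker and $bT \le (1-\varepsilon/2)\binom{n}{2}$, the unclaimed density remains at least $\varepsilon/2$ throughout the game, and the $S$-to-$(V\setminus S)$ cut size is concentrated around its mean by a Chernoff inequality. A union bound over the $\le 2^n$ choices of $S$ and the $T$ time steps delivers the property. (Note that Maker's own edges total only $(1+o(1))n = o(n^2)$, so they perturb Breaker's distribution negligibly.)

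Conditional on this property, the classical P\'osa lemma (applied to Maker's current graph $G_i$, which the chord moves from step (iii) keep as a sufficiently good expander) implies that the endpoint set produced by iterated rotations on $P_i$ has linear size; by the expansion of $U_t$, this endpoint set is incident to $\Omega(n^2)$ unclaimed edges, so the desired booster or direct extension exists in essentially every round. A careful accounting then shows only $o(n)$ auxiliary rounds are ever needed, and the closing phase is handled similarly. The main technical obstacle is the probabilistic step: for small sets $|S|$ (constant or logarithmic size) the union bound over $\binom{n}{|S|}$ sets is essentially tight and one must use the Chernoff tail sharply in that regime; a secondary subtlety is the endgame accounting, when the unclaimed density has fallen to $\Theta(\varepsilon)$ and only a few vertices lie outside $V(P_i)$, so one must rely on the rotation set generated by the chords from step (iii) to keep finding usable unclaimed edges.
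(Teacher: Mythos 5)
Your probabilistic lemma (uniform expansion of the unclaimed graph $U_t$) is essentially sound and plays the role of Lemmas \ref{lem:MinDegOfFInHam} and \ref{lem:NumOfFreeEdgInHam} in the paper, but the combinatorial heart of your argument has a genuine gap. P\'osa's lemma produces a linear-size rotation endpoint set only if \emph{Maker's own graph} $G_i$ is a $(\Theta(n),2)$-expander, since rotations may only use Maker's edges. Within your round budget, $G_i$ is a nearly spanning path plus $o(n)$ chords, and such a graph is \emph{never} an expander at linear scales: the chords have only $o(n)$ endpoints, so some contiguous path segment $S$ of length $\delta n$ meets $o(n)$ of them, and then $|N_{G_i}(S)|\le 2+o(n)\ll 2|S|$. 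Hence the claim that ``the endpoint set produced by iterated rotations has linear size'' is unsupported, and with it the claim that a booster or extension exists in essentially every round; note also that an unclaimed edge incident to the endpoint set is not in general a booster --- the standard argument needs the $\Theta(n^2)$ booster \emph{pairs} coming from second-level rotations, which again presupposes linear endpoint sets on both sides. Nor can you repair this by claiming more chords: turning a graph that contains a near-Hamilton path into a genuine $(\delta n,2)$-expander requires $\Omega(n)$ additional edges, which destroys the $(1+o(1))n$ round count and, for small $\varepsilon$, even exceeds the total of roughly $n/(1-\varepsilon)$ rounds the game lasts before the board is exhausted. So the ``careful accounting showing only $o(n)$ auxiliary rounds'' cannot exist in the form you describe; this is exactly the point where the strategy of \cite{Ham}, which can afford $\Theta(n\ln n)$ Maker edges, does not transfer to the random-Breaker setting.

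For comparison, the paper sidesteps expanders and rotations almost entirely and instead exploits Breaker's randomness directly: w.h.p.\ every vertex keeps $\Omega(\varepsilon n)$ available incident edges throughout, and between any two sets of polynomial size a constant fraction of the edges stays available. Maker therefore grows his path greedily for $n-n^{1/4}$ steps, always moving to a new endpoint that itself retains many available edges into the unused vertex set, and then absorbs the remaining $n^{1/4}$ vertices one at a time by closing the current path into a cycle with three well-chosen edges and reopening the cycle at a vertex having an available edge to an uncovered vertex --- a total of only $O(n^{1/4})$ extra moves. If you wish to salvage your outline, you need an absorption mechanism of this frugal kind (or some other device whose overhead is $o(n)$) in place of the expander-plus-P\'osa machinery.
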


 Under the same setting, let $\mathcal{PM}_n$ be the random-Breaker game played on $E(K_{n})$ where
 Maker's goal in to build a graph which contains a perfect matching (or \emph{a nearly} perfect matching -- if $n$ is odd). In the following theorem we prove that if Breaker plays randomly and claims $b\leq(1-\varepsilon)  n$ edges in each round, then Maker typically wins the perfect-matching game. Clearly this result is also asymptotically tight, since for $b\geq(1+\varepsilon)  n$, after claiming all  edges, Maker has less than $\frac n2$ edges.

 \begin{theorem}\label{thm:RB-PM}
 	Let $\varepsilon>0$, let $n$ be an integer and let
 	$b\leq(1-\varepsilon)  n$. Then Maker has a 
 	strategy which is w.h.p.\ a winning strategy for the $(1:b)$ random-Breaker $\mathcal{PM}_n$ game in $(1+o(1))\frac n2$
 	rounds.
 \end{theorem}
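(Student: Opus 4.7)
The plan is to split Maker's play into a long ``greedy'' phase and a short ``augmenting paths'' phase. Throughout, let $M$ denote Maker's current matching and $U = V\setminus V(M)$ the set of unmatched vertices. In the greedy phase, Maker's rule is simply: play any edge with both endpoints in $U$ that is still unclaimed by Breaker. He continues until $|U|\le \log^2 n$.

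To show the greedy phase never gets stuck w.h.p., I would fix a size $s\ge \log^2 n$ and a set $U\subseteq V$ with $|U|=s$. After at most $n/2$ rounds Breaker has claimed at most $bn/2\le (1-\varepsilon)\binom{n}{2}(1+o(1))$ edges, drawn essentially uniformly without replacement from $E(K_n)$ (Maker's $O(n)$ edges are a negligible perturbation of the sampling universe). Hence the expected number of Breaker edges inside $\binom{U}{2}$ is at most $(1-\varepsilon)\binom{|U|}{2}(1+o(1))$, and a hypergeometric Chernoff bound gives $|E_B\cap \binom{U}{2}|\le (1-\varepsilon/2)\binom{|U|}{2}$ with probability $1-\exp(-\Omega(\varepsilon^2 s^2))$. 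Union-bounding this over the $\binom{n}{s}\le (en/s)^s$ choices of $U$, over all $s\in[\log^2 n,n]$, and over all rounds, the total failure probability is $o(1)$ since $(en/s)^s\cdot\exp(-\Omega(\varepsilon^2 s^2))=\exp(-\Omega(\varepsilon^2 s^2))$ once $s\gg\log n$. This gives a uniform lower bound of $\Omega(\varepsilon^2|U|^2)$ unclaimed edges in $\binom{U}{2}$ throughout the phase, so Maker always has a legal move.

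In the augmenting phase Maker has $|U|\le \log^2 n$. Pair the unmatched vertices arbitrarily as $(u_1,u_2),(u_3,u_4),\ldots$. For each pair $(u,u')$, Maker searches for a matching edge $\{v,w\}\in M$, not previously used for a pair, such that both $(u,v)$ and $(u',w)$ are currently unclaimed; he plays these two edges in the next two rounds. Afterwards the path $u$--$v$--$w$--$u'$ is an augmenting path in Maker's graph, so replacing $(v,w)$ by $\{(u,v),(u',w)\}$ extends the matching by one and covers both $u,u'$. For a fixed candidate edge $\{v,w\}$ the probability that both $(u,v),(u',w)$ are unclaimed is $\varepsilon^2(1-o(1))$ (each edge is claimed with probability $\le 1-\varepsilon$, and the hypergeometric dependence only helps); since $|M|=\Omega(n)$ and Chernoff gives concentration, we find $\Omega(\varepsilon^2 n)$ candidate matching edges per pair, which is far more than the $O(\log^2 n)$ pairs to be served (even accounting for matching edges that Breaker disables in the intervening $O(\log^2 n)$ rounds). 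Total round count: $(n-|U|)/2+|U|=(1+o(1))n/2$, as required.

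The main obstacle is phase 1, because $U_k$ itself depends on Breaker's past random choices; the fix is to union-bound over all potential $U$ of each size rather than conditioning on the realised one, which sidesteps the dependence entirely and, thanks to the $s^2$ in the exponent, still beats the $\binom{n}{s}$ count once $s\ge \log^2 n$. A minor technical point is that Breaker samples without replacement from the shrinking pool of unclaimed edges and that Maker's play is adaptive, but since Maker's edges are $O(n)\ll\binom{n}{2}$ this only affects the hypergeometric estimates by $(1+o(1))$ factors.
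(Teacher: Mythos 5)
Your Phase~1 is essentially sound (and is close in spirit to the paper's first stage), but Phase~2 contains a genuine gap. The claim that each unmatched pair $(u,u')$ has $\Omega(\varepsilon^2 n)$ usable matching edges $\{v,w\}$ (with $(u,v)$ and $(u',w)$ both unclaimed) treats Breaker's edge set as an unconditioned uniform sample, while the matching $M$, the unmatched set $U$, and hence the identity of the candidate connector edges are all functions of that very sample, since Maker's greedy play is adaptive. You correctly identified this issue in Phase~1 and fixed it by union-bounding over all sets $U$ of a given size; but the analogous fix in Phase~2 would require a union bound over all pairs $(u,u')$ \emph{and} all matchings $M$ Maker might be holding, and there are $e^{\Theta(n\log n)}$ such matchings against a per-instance failure probability of only $e^{-\Theta(\varepsilon^2 n)}$, so the bound does not close. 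Nor do the adaptivity-proof uniform statements (such as $\Delta(B)\le(1-\varepsilon/2)n$, or ``many available edges between any two large sets'') suffice when $\varepsilon$ is small: a matching edge $\{v,w\}$ is unusable for $(u,u')$ as soon as Breaker owns two of the four connectors between $\{u,u'\}$ and $\{v,w\}$, and Breaker typically owns about $2(1-\varepsilon)n$ edges at $u$ and $u'$ combined, which is numerically enough to ruin all $\approx n/2$ matching edges; so no degree- or edge-count argument can certify even one good candidate, and the ``$\Omega(\varepsilon^2 n)$ candidates, then Chernoff'' step is exactly where an idea is missing. (The parenthetical ``the hypergeometric dependence only helps'' is also in the wrong direction --- without replacement, the events that two fixed edges are both unclaimed are negatively correlated --- but that costs only a $(1-o(1))$ factor and is cosmetic compared to the adaptivity problem.)

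This missing step is precisely what the paper's Stages II and III are built to handle. Rather than hoping that the leftover vertices can be absorbed through already-built matching edges, the paper keeps a polynomially large reservoir $R$ (size $\Theta(n^{1/4})$ per side), \emph{repairs} it by swapping out reservoir vertices with few available edges into $R$ --- using only facts that hold simultaneously for all vertex sets of polynomial size, hence immune to adaptivity --- and then completes a perfect matching inside $F[R]$ via a Hall-type condition, noting finally that Breaker w.h.p.\ claims no edge inside the small reservoir during the last $n^{1/4}$ rounds. To rescue your approach you would need some comparable mechanism that controls the \emph{joint} structure of available connectors uniformly (or a martingale/exposure argument tailored to the adaptive process), not just per-edge availability probabilities.
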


Let $k$ be an integer and let $\mathcal{C}^k_n$ be the random-Breaker game played on $E(K_n)$ where
 Maker's goal is to build a $k$-connected graph on $n$ vertices. Using Theorem~\ref{thm:RBHam} and Theorem~\ref{thm:RB-PM}, in the following theorem we prove that if Breaker plays randomly and claims $b\leq(1-\varepsilon) \frac nk$ edges in each round, then Maker typically wins the $k$-connectivity game.  This result is asymptotically tight, since for $b\geq(1+\varepsilon) \frac nk$, after claiming all  edges in the graph, the average degree in Maker's graph is smaller than $k$ and therefore the minimum degree is also smaller than $k$.

 \begin{theorem}\label{thm:RB-C^k}
 	Let $\varepsilon>0$, let $n$ be an integer and let
 	$b\leq(1-\varepsilon)\frac nk$. Then Maker has a 
 	strategy which is w.h.p.\ a winning strategy for the $(1:b)$ random-Breaker $\mathcal{C}^k_n$ game in $(1+o(1))\frac {kn}2$
 	rounds.
 \end{theorem}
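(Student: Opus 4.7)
My plan is to reduce Theorem~\ref{thm:RB-C^k} to Theorems~\ref{thm:RBHam} and~\ref{thm:RB-PM} by having Maker build, in parallel, $\lfloor k/2\rfloor$ edge-disjoint Hamilton cycles and, if $k$ is odd, one additional (nearly) perfect matching on top. Maker runs $\lceil k/2\rceil$ ``virtual'' sub-games: in each of the $\lfloor k/2\rfloor$ Hamilton cycle sub-games he follows the strategy from Theorem~\ref{thm:RBHam}, and (for odd $k$) in the perfect matching sub-game he follows Theorem~\ref{thm:RB-PM}. He schedules his own moves by assigning fraction $2/k$ of them to each Hamilton cycle sub-game and (for odd $k$) fraction $1/k$ to the perfect matching sub-game; the fractions sum to $1$, and each sub-game receives enough Maker moves (namely $(1+o(1))n$ per Hamilton cycle sub-game and $(1+o(1))n/2$ for the perfect matching sub-game) to finish within the promised $(1+o(1))kn/2$ rounds.

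The key bookkeeping step is to verify that each sub-game faces an effective random-Breaker bias within the relevant theorem's threshold. Between two consecutive Maker moves in a Hamilton cycle sub-game, roughly $k/2$ global rounds pass, during which Breaker claims about $(k/2)b\le(1-\varepsilon)n/2$ random edges, matching the threshold of Theorem~\ref{thm:RBHam}. In the perfect matching sub-game, about $k$ global rounds pass between Maker's moves, so Breaker claims $kb\le(1-\varepsilon)n$ random edges, within the threshold of Theorem~\ref{thm:RB-PM}. The $\lceil k/2\rceil-1$ moves Maker plays in other sub-games between his consecutive moves in one sub-game are not random from that sub-game's perspective, but since they number $O(k)=o(n)$ they can be absorbed into a tiny loss in the $\varepsilon$ margin, after verifying that the strategies of Theorems~\ref{thm:RBHam} and~\ref{thm:RB-PM} are robust to $o(n)$ adversarial Breaker moves (which is typical of such strategies). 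Thus Theorems~\ref{thm:RBHam} and~\ref{thm:RB-PM} imply that, w.h.p., Maker wins each of his sub-games.

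The most delicate point, and the main obstacle, is the concluding graph-theoretic step: deducing that Maker's graph $G$ is $k$-connected and not merely of minimum degree $k$. Already for $k=3$ a Hamilton cycle $C$ together with an edge-disjoint perfect matching $M$ can produce a $3$-regular graph $G=C\cup M$ with a $2$-vertex cut, so the raw structure alone does not force $k$-connectivity. To close this gap one would either strengthen Theorems~\ref{thm:RBHam} and~\ref{thm:RB-PM} so that the cycles and matching produced enjoy additional joint expansion-type properties (from which, together with minimum degree $k$, $k$-connectivity follows via a Menger-style argument), or exploit the $\Theta(\varepsilon n)$ slack in Maker's move budget to add ``connectivity-booster'' edges that explicitly rule out every potential small vertex cut. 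Either route requires a careful structural argument, and this is where I expect most of the real work of the proof to lie.
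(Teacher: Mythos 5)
Your reduction has a genuine gap, and it is exactly the one you flag at the end: after building $\lfloor k/2\rfloor$ edge-disjoint Hamilton cycles (plus a matching for odd $k$) on the whole vertex set, Maker's graph is only guaranteed to be $k$-regular, not $k$-connected. Your own $k=3$ example already kills the argument (a Hamilton cycle plus an edge-disjoint perfect matching can have a $2$-vertex cut), and the same phenomenon persists for unions of several Hamilton cycles. Since you do not supply the ``additional joint expansion-type properties'' or the ``connectivity-booster'' step you mention, the proof as written does not establish the theorem; the final and essential implication is missing. (Separately, your parallel-scheduling step -- treating the other sub-games' moves as $o(n)$ adversarial noise and asserting that the strategies of Theorems~\ref{thm:RBHam} and~\ref{thm:RB-PM} are ``robust'' to this -- is also not justified by those theorems as stated, though that part could plausibly be repaired; the connectivity step is the real obstruction.)

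The paper sidesteps this entirely by choosing a different decomposition, one whose union is $k$-connected for purely deterministic reasons. Maker partitions the vertex set into $k-1$ parts $V_1,\dots,V_{k-1}$ of size $\lfloor n/(k-1)\rfloor$ (plus a leftover set $U$ of fewer than $k-1$ vertices), builds a perfect matching in each bipartite graph between $V_i$ and $V_j$ (via the strategy of Theorem~\ref{thm:RB-PM2}, the bipartite version of Theorem~\ref{thm:RB-PM}), builds a Hamilton cycle inside each $V_i$ (via the strategy of Theorem~\ref{thm:RBHam}), and finally attaches each vertex of $U$ by $k$ edges. A graph consisting of $k-1$ vertex-disjoint cycles with a perfect matching between every pair of cycles is $k$-connected (Lemma~2.7 of \cite{CFKL}), so no extra expansion or booster argument is needed. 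The sub-games are played sequentially, and the bias bookkeeping works because each sub-board has about $n/(k-1)$ vertices while Breaker's bias is only $(1-\varepsilon)n/k$; the paper re-verifies the two key lemmas (availability of edges between polynomial-size sets, and an upper bound on Breaker's degree restricted to each $G_i$ and $B_{ij}$) over the full $(1+o(1))kn/2$ rounds, then applies a union bound over the $\binom{k-1}{2}+(k-1)$ sub-games. The round count $(1+o(1))\bigl(\lfloor n/(k-1)\rfloor\binom{k-1}{2}+\lfloor n/(k-1)\rfloor(k-1)\bigr)=(1+o(1))\frac{kn}{2}$ matches the statement. If you want to salvage your route, you would have to prove the strengthened joint-expansion version of Theorems~\ref{thm:RBHam} and~\ref{thm:RB-PM} yourself; switching to the paper's partition-based structure is the shorter path.
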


The other version of random-player Maker-Breaker games is the random-Maker games. Unlike the ordinary Maker-Breaker games, for several standard games in this version it turns out to be rather difficult for Maker to win the game. Even in the unbiased version $(1:1)$, it turns out that in many games Breaker is the typical winner of the game. Therefore, it makes sense to study the $(m:1)$ random-Maker games and to look for the critical bias of Maker.
In the $(m:1)$ random-Maker games, $(X,\mathcal F)$, there are two players, Maker and Breaker. In each round, Maker claims $m$ elements from the board, chosen independently uniformly at random among all unclaimed elements, while Breaker claims one element from the board (according to his best strategy).   In this case, the critical bias of the game, $m^*$, is the maximal value of $m$ for which Breaker is the typical winner of the game.

The first and the most basic game we discuss is the game where Breaker's goal is to {\em isolate a vertex} in Maker's graph playing on $E(K_n)$.  Recall the result of
Chv\'atal and Erd\H os \cite{CE} about isolating a vertex in biased Maker-Breaker game. In the following theorem we show that playing a
random-Maker game on $E(K_n)$, Breaker has a strategy
that typically allows him to isolate a vertex in Maker's graph,
provided that $m=O(\ln \ln n)$. It thus follows that for
this range of $m$, Breaker typically wins every game whose winning
sets consist of spanning subgraphs with a positive minimum degree
(such as the Hamiltonicity game, the perfect-matching game, the $k$-connectivity game,  etc.).

\begin{theorem}\label{thm:RMIsoVertex}
	Let  $\varepsilon>0$, let $n$ be an integer  and let $m\leq (\frac 12-\varepsilon)\ln
	\ln n$. Then w.h.p.\ Breaker has a strategy to isolate a vertex in Maker's
	graph while playing the $(m:1)$-random-Maker game.
\end{theorem}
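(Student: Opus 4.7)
The plan is to give Breaker a \emph{block strategy}: partition the game into $K:=\lfloor \ln n/\ln\ln n\rfloor$ consecutive blocks, each of length $L:=n-1$, and use each block to attempt to isolate one vertex. At the start of block $i$, Breaker picks a vertex $v_i\notin\{v_1,\dots,v_{i-1}\}$ at which Maker currently has no edge (call such a $v_i$ \emph{alive and fresh}); during each of the $n-1$ rounds of block $i$, Breaker claims an unclaimed edge incident to $v_i$. Block $i$ is \emph{successful} if $v_i$ is still alive at its end, in which case Breaker has claimed all $n-1$ edges at $v_i$ and hence $v_i$ is isolated in Maker's graph, giving Breaker the win.

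Two analytic estimates drive the argument. \emph{(a) Fresh alive vertices persist.} After $(i-1)(n-1)$ rounds Maker's edges are essentially a uniform random subset of size $m(i-1)(n-1)$ of $E(K_n)$, so the number of vertices untouched by Maker is concentrated around $n\bigl(1-\tfrac{2}{n}\bigr)^{m(i-1)(n-1)}\sim n\,e^{-2m(i-1)}$. For every $i\le K$ we have $2m(i-1)\le(1-2\varepsilon)\ln n$, so this count is at least $n^{2\varepsilon}$, which vastly exceeds $K$; thus a fresh alive $v_i$ is w.h.p.\ available at the start of each block. \emph{(b) Per-block survival probability.} In round $t$ of block $i$, conditionally on the past, the probability that Maker's $m$ uniformly random draws include at least one edge incident to $v_i$ is, up to lower-order terms, $2m\,d_U(v_i)/n^2$ with $d_U(v_i)\le n-t$. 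Summing from $t=1$ to $n-1$ gives an expected number of ``bad'' rounds of at most $m(1+o(1))$, whence the probability $v_i$ survives the whole block is at least $e^{-m(1+o(1))}=(\ln n)^{-(1/2-\varepsilon)(1+o(1))}$.

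Combining these two estimates, the expected number of successful blocks is $K\,e^{-m(1+o(1))}=\Theta\bigl((\ln n)^{1/2+\varepsilon}/\ln\ln n\bigr)\to\infty$. To upgrade expectation to a w.h.p.\ statement I would apply a second-moment argument to $X:=\sum_i X_i$, where $X_i$ is the indicator of block $i$ succeeding: different blocks rely on disjoint per-round slices of Maker's randomness and target distinct vertices, so the pairwise covariance $\mathrm{Cov}(X_i,X_j)$ is of lower order than $\mathbb{E}X_i\,\mathbb{E}X_j$, and Chebyshev then yields $\Pr[X=0]=o(1)$.

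The main obstacle is making the ``disjoint randomness'' intuition of the second-moment step fully rigorous, since the choice of $v_i$ is itself a history-dependent random variable. The technical heart will be a uniform lower bound, over all histories in which the block strategy has been executable so far, of $\Omega(e^{-m})$ for the conditional probability that block $i$ succeeds. This in turn reduces to the fresh-alive property of $v_i$ (so that $d_U(v_i)=n-1$ at the block's start) together with the estimate that the total unclaimed edge pool remains $(1-o(1))\binom{n}{2}$ throughout, which follows from $KL(m+1)\ll\binom{n}{2}$. A minor bookkeeping issue is that previously claimed edges of the form $\{v_j,v_i\}$ only shorten block $i$, which helps Breaker and is absorbed into the $o(1)$ terms.
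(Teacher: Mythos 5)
Your proposal is essentially the paper's own proof: Breaker repeatedly targets a Maker-untouched vertex and spends roughly $n$ rounds trying to claim all edges at it, the per-attempt success probability is $(\ln n)^{-1+\Theta(\varepsilon)}$, there are $\Theta(\ln n/\ln\ln n)$ attempts, and a coupon-collector-type argument guarantees that untouched vertices persist throughout the relevant $\Theta(n\ln n/\ln\ln n)$ rounds. The only real differences are minor: your per-block bound $e^{-(1+o(1))m}$ (exploiting the shrinking unclaimed degree at the target, versus the paper's cruder $e^{-(1+o(1))2m}$) would in fact allow $m\le(1-\varepsilon)\ln\ln n$, and your second-moment/Chebyshev step is superfluous --- once you have the uniform conditional lower bound on a block's success probability that you correctly identify as the technical heart (and which follows from the per-round product bound $\prod_t(1-p_t)\ge e^{-(1+o(1))\sum_t p_t}$, not merely from the expected number of bad rounds), the straightforward bound $\Pr[\text{all blocks fail}]\le(1-e^{-(1+o(1))m})^{K}=o(1)$, i.e.\ the binomial domination the paper uses, finishes the argument.
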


Our next theorem shows that in the random-Maker Hamiltonicity game, if $m=\Omega (\ln \ln n)$ then Maker is the typical winner of the game. Together with Theorem
\ref{thm:RMIsoVertex}, this implies that for the random-Maker Hamiltonicity game, $m^*=\Theta(\ln \ln n)$.

\begin{theorem}\label{thm:RMHam}
There exists a constant $A>0$ such that if $m\geq A\ln
	\ln n$, then w.h.p.\ Maker's graph contains a Hamilton cycle while
	playing the $(m:1)$-random-Maker game.
\end{theorem}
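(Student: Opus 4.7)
The plan is to show that Maker's graph $G_M = K_n \setminus B$ (where $B$ denotes Breaker's graph, with $|E(B)|=(1+o(1))\binom{n}{2}/(m+1)$) w.h.p.\ satisfies a P\'osa-type condition sufficient for Hamiltonicity: $\delta(G_M)\geq k$ for some fixed constant $k$, together with $|N_{G_M}(S)\setminus S|\geq 2|S|+1$ for every $S\subseteq V$ with $|S|\leq(n-2)/2$. From these, a Hamilton cycle follows from P\'osa's rotation-extension technique (or from a packaged Hamiltonicity criterion such as the one of Krivelevich--Sudakov).

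The heart of the proof is the minimum-degree bound. Fix a vertex $v$ and any Breaker strategy that focuses on $v$. During the at most $n-1$ rounds of the attack, the number $Y_v$ of $v$-edges Maker claims is a sum of hypergeometric variables whose mean telescopes to $\sum_{t=1}^{n-1} m\cdot r_v(t)/N_t\approx m$, so a Chernoff/Poisson estimate gives $\Pr[d_M(v)\leq k]\leq O(m^k e^{-m})$. The naive union bound over the $n$ vertices is only $ne^{-m}$, which is \emph{not} $o(1)$ for constant $A$; the crucial saving is that Breaker's attempts must be serial. Each attempt lasts $\Theta(n/m)$ rounds on average (the expected time for Maker to first touch the current target), so after $i$ failed attempts Maker has already placed $\Theta((i-1)n/2)$ uniformly random edges, and the number of vertices $v$ still satisfying $d_M(v)\leq k$ has dropped to $O(ne^{-(i-1)})$. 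Hence Breaker has at most $O(\ln n)$ viable attempts, and
\[
\Pr[\delta(G_M)\leq k]\;\leq\;O(\ln n)\cdot O(m^k e^{-m})\;=\;O((\ln n)^{1-A}(\ln\ln n)^k)\;=\;o(1)
\]
once $A$ is sufficiently large (depending on $k$).

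The expansion condition splits into two regimes. For $|S|\gtrsim n/\sqrt{m}$ a direct budget argument works: any vertex $u\in V\setminus S$ outside $N_{G_M}(S)$ must have all its $|S|$ edges to $S$ in $B$, so at most $|E(B)|/|S|$ such $u$ exist, giving $|N_{G_M}(S)|\geq n-|S|-n^2/(2m|S|)\geq 2|S|+1$. For small $|S|$ (say $|S|\leq\delta(G_M)/3$), the min-degree bound from the previous step yields $|N_{G_M}(S)|\geq\delta(G_M)-|S|+1\geq 2|S|+1$. The intermediate range is handled by applying the min-degree analysis at several constants to cap the number of vertices of large Breaker-degree, which in turn bounds $\sum_{v\in S} d_B(v)$ and hence $|V\setminus S\setminus N_{G_M}(S)|\leq(\sum_{v\in S} d_B(v))/|S|$.

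The main obstacle is the minimum-degree bound: the natural union bound $n\cdot e^{-m}$ is off by roughly a factor of $n/\ln n$ and does not close for any constant $A$. The argument must exploit the serial nature of Breaker's attacks together with the fact that Maker's uniformly random moves rapidly ``cover'' every vertex, so that only $O(\ln n)$ attempts can land on a target whose current Maker-degree is still at most $k$. A secondary technical difficulty is bridging the small- and large-$|S|$ regimes in the expansion step, which requires careful bookkeeping of the few high-$d_B$ vertices permitted by the min-degree analysis.
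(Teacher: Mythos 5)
Your framing via the final graph $G_M=K_n\setminus B$ is legitimate (at the end of the game Maker owns exactly the complement of Breaker's $\approx\binom n2/(m+1)$ edges), but the concluding step does not work. As literally stated your expansion condition is unsatisfiable: for $|S|$ near $(n-2)/2$ one has $|N_{G_M}(S)\setminus S|\le n-|S|<2|S|+1$, so no graph meets it. Even after restricting to sets of size at most $cn$ for small $c$, connectivity plus constant minimum degree plus factor-$2$ expansion does \emph{not} imply Hamiltonicity: P\'osa's rotation--extension technique only yields long paths and, via Lemma~\ref{lemma:boosters}, many boosters; one must still \emph{add} $\Theta(n)$ boosters to actually close a Hamilton cycle. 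This is exactly the step the paper handles dynamically in the last part of its proof: once Maker's graph is a connected $(\delta n,2)$-expander (which happens after only $O(n\ln n/m)$ rounds, long before the game ends), each further random Maker edge is a booster with probability $\Omega(\delta^2)$, and $O(n/\delta^2)$ further moves finish the cycle. A purely static criterion applied to the final graph would require much stronger hypotheses (e.g.\ $d$-expansion with $d\ge 12$ of all sets up to a suitable size together with an edge between any two large disjoint sets, as in Hefetz--Krivelevich--Szab\'o), which your bounds (constant $k$, factor $2$) do not supply; note also that Breaker can force $\delta(G_M)=O(\ln\ln n)$ simply by spending his first $n-1$ moves on one vertex, and your ``intermediate range'' of set sizes -- where a union bound over exponentially many sets is needed -- is precisely where the proposal offers only a vague bookkeeping remark.

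The minimum-degree argument is also only a heuristic as written: it implicitly assumes Breaker mounts disjoint, serial, focused attacks, and the quantitative claims (``each attempt lasts $\Theta(n/m)$ rounds'', ``after $i$ failed attempts only $O(ne^{-(i-1)})$ live vertices remain'', hence ``$O(\ln n)$ viable attempts'') are asserted for that strategy class rather than proved against an arbitrary adaptive Breaker, who may interleave attacks on many vertices, abandon and resume them, and choose targets using Maker's random history; moreover a \emph{successful} attack takes $\approx n$ rounds, not $\Theta(n/m)$, so the accounting needs care. The paper's rigorous substitute for this whole step is the Box-game machinery: Lemma~\ref{lemma:box2} and Corollary~\ref{claim:MinDeg} show that against \emph{any} Breaker strategy, random Maker w.h.p.\ places at least $d$ edges in every box $A_v$ within $O(n\ln n/m)$ rounds, and the tournament decomposition in Lemma~\ref{lemma:buildExpander} reuses this structure to prove genuine $(\delta n,2k)$-expansion of Maker's own graph, which then feeds the connectivity and booster phases. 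So your intuition for why $m=\Theta(\ln\ln n)$ is the right scale is sound, but the two load-bearing components -- an expansion/min-degree statement valid against all adaptive Breaker strategies, and a mechanism converting expansion into Hamiltonicity -- are missing from the proposal.
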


Finally, let $k$ be an integer and consider the $(m:1)$ \emph{random-Maker
	$k$-vertex-connectivity game} played on the edge set of $K_n$, where
Maker's goal is to build a spanning subgraph which is
$k$-vertex-connected.  In the following
theorem we show that for $m=\Omega_k (\ln \ln n)$, Maker is the typical winner of this game. Again, together with Theorem
\ref{thm:RMIsoVertex} we have that in the random-Maker $k$-connectivity game, $m^*=\Theta_k(\ln\ln n)$.

 \begin{theorem}\label{thm:RMCon}
 	For every integer $k>0$, there exists a constant  $A>0$ such that if $m\geq A\ln
 	\ln n$, then playing the $(m:1)$-random-Maker game, w.h.p.\ Maker's graph is $k$-connected.
 \end{theorem}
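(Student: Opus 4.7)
My plan is to reduce $k$-connectivity of Maker's final graph $G_M$ to two structural properties that I can verify in the random-Maker model with $m \geq A\ln\ln n$ for a sufficiently large constant $A = A(k)$. The combinatorial input is the elementary fact that a graph $G$ on $n$ vertices satisfying $\delta(G)\geq k$ and $|N_G(S)\setminus S|\geq k$ for every non-empty $S\subseteq V$ with $|S|\leq n/2$ is $k$-connected: any hypothetical cut $T$ with $|T|<k$ would isolate a smaller side $A$ of $G-T$ of size at most $n/2$ with $N_G(A)\setminus A \subseteq T$, contradicting the expansion assumption.

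The first condition, $\delta(G_M)\geq k$ w.h.p., I would obtain by adapting the analysis that controls minimum degree en route to Theorem~\ref{thm:RMHam}. Fix a vertex $v$. Breaker's best strategy against $v$ is to spend every move on an edge incident to $v$; but each round Maker's $m$ uniformly random picks claim an edge at $v$ with probability roughly $2/n$ each, so over the $\Theta(n)$ rounds Breaker needs to claim nearly all edges at $v$, Maker accumulates about $m$ edges at $v$ in expectation. A refined concentration argument (paralleling the lower-bound analysis behind Theorem~\ref{thm:RMIsoVertex} and the $\delta\geq 2$ step in the proof of Theorem~\ref{thm:RMHam}) shows $d_{G_M}(v)\geq k$ with probability large enough that a union bound over the $n$ vertices yields $\delta(G_M)\geq k$ w.h.p., once $A$ is chosen large in terms of $k$.

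The second condition --- $|N_{G_M}(S)\setminus S|\geq k$ for every non-empty $S\subseteq V$ with $|S|\leq n/2$ --- amounts to showing that for no such $S$ and no $T\subseteq V\setminus S$ with $|T|\leq k-1$ are all $|S|(n-|S|-|T|)$ edges between $S$ and $V\setminus(S\cup T)$ claimed by Breaker. Whenever $|S|\gtrsim n/m$ the number of cross-edges exceeds Breaker's total budget $\binom{n}{2}/(m+1)\approx n^2/(2m)$, ruling out this scenario for budget reasons alone. For the remaining small-$|S|$ regime I would adapt the probabilistic analysis from the proof of Theorem~\ref{thm:RMHam}: estimate the probability that Maker's random $m$-subsets miss every cross-edge throughout the game, and take a union bound over the at most $n^{|S|+k-1}$ relevant pairs $(S,T)$.

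The main obstacle is precisely this small-$|S|$ regime of the expansion step, where Breaker has enough budget to block a specific cut and the naive bound on Maker missing all cross-edges is too weak for the union bound. Overcoming it requires tracking the actual game dynamics --- Breaker adaptively focusing on a small edge set while Maker samples uniform $m$-subsets of the unclaimed edges --- along the lines used for the $|S|=1$ case in the proof of Theorem~\ref{thm:RMHam}, and showing that the effective probability of Maker missing the cut is $o(n^{-(|S|+k-1)})$. Inflating the constant $A$ in front of $\ln\ln n$ by a factor depending on $k$ boosts all such estimates to the required strength, and combining the minimum-degree and expansion conditions with the combinatorial lemma yields $k$-connectivity.
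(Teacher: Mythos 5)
Your overall reduction (minimum degree plus vertex expansion of all sets of size at most $n/2$ implies $k$-connectivity) is sound and plays the role of Lemma~\ref{lemma:connectivity} in the paper, and your budget argument for linear-sized sets matches Part~II of the paper's proof. The genuine gap is exactly in the regime you flag as the ``main obstacle'' --- small sets, including the minimum-degree step itself --- and your proposed remedy does not close it. Against a Breaker who simply devotes every move to a fixed vertex $v$, Breaker needs only about $n$ rounds to finish $v$ off, and during those rounds Maker's expected number of random edges at $v$ is only $\Theta(m)=\Theta(\ln\ln n)$; consequently the probability that $d_M(v)<k$ (or, more generally, that a fixed bounded-size cut $(S,T)$ gets blocked) is of order $(\ln n)^{-\Theta(A)}$. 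This is a \emph{true} probability of the process, so no refined tracking of the dynamics can push it below $o(n^{-1})$, let alone $o(n^{-(|S|+k-1)})$, and increasing the constant $A$ only changes the exponent of $\ln n$, never producing a polynomially small bound. Hence the union bound over the $n$ vertices (or over $n^{|S|+k-1}$ pairs) fails irreparably; this is consistent with Theorem~\ref{thm:RMIsoVertex}, which shows the per-vertex threat is real and only barely loses for $m=\Theta(\ln\ln n)$.

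The missing idea is a global budget/attempt-counting argument that avoids union-bounding over all vertices. In the paper this is packaged as the Box game (Lemma~\ref{lemma:box2}, Corollary~\ref{claim:MinDeg}): the edges of $K_n$ are partitioned into boxes $A_v$ of size about $n/2$, and since Breaker claims one edge per round, within the $O(n\ln n/m)$ rounds during which untouched boxes still exist only $O(\ln n)$ boxes can ever be brought to a dangerous state before Maker's random edges touch them; the union bound is therefore over $O(\ln n)$ Breaker ``attempts,'' each succeeding with probability $(\ln n)^{-\Theta(A)}$, which vanishes for large $A$. This yields (Lemma~\ref{lemma:buildExpander}) that every vertex acquires $d=16k$ Maker edges, each chosen essentially uniformly from at least $n/4$ still-available edges at the moment of choice. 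Small-set expansion is then derived from the randomness of \emph{Maker's own guaranteed edges}: the probability that all $4ka$ edges chosen by the vertices of a set $A$ of size $a$ land inside a prescribed set of size $(2k+1)a$ is at most $\left(O(ka/n)\right)^{4ka}$, which is polynomially small and does survive the union bound over all pairs $(A,N)$. Your plan instead bounds the probability of the event ``Breaker claims all cross-edges of $S$,'' which for bounded $|S|$ is only polylogarithmically unlikely; switching to the paper's two-phase structure (Box game first, then expansion from Maker's near-uniform edges, with your budget argument reserved for linear-sized sets) is what makes the proof go through.
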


\subsection{Notation and terminology}

Our graph-theoretic notation is standard and follows that of \cite{West}. In particular we use the following:

For a graph $G$, let $V=V(G)$ and $E=E(G)$ denote its set of
vertices and edges, respectively. For subsets $U,W\subseteq V$ we
denote by $E_G(U,W)$  all the edges
$e\in E$ with both endpoints in $U\cup W$ for which $e\cap U\neq
\emptyset$ and $e\cap W\neq \emptyset$. 

Playing Maker-Breaker game where the board $X$ is the edge set of some graph $G$, we denote by $M$ the subgraph of $G$ consisting of Maker's edges, at any point during the game. Similarly, we denote by $B$ the graph of Breaker and $F=G\setminus(M\cup B)$ is the subgraph of all unclaimed edges, at any point during the game. We say that an edge $e\in E$ is \emph{available} if $e\in E(F)$.

We also denote by $E_M(U,W)$
(respectively, $E_B(U,W)$) all such edges claimed by Maker
(respectively, Breaker) and by $E_F(U,W)$ all such unclaimed edges. We let $e(U,W)$ denote the number of edges in $E(U,W)$ (respectively, $e_M(U,W)$, $e_B(U,W)$ and $e_F(U,W)$ are the number of edges in $E_M(U,W)$, $E_B(U,W)$ and $E_F(U,W)$). For a  subset $U\subset V$ , we write
$N_G(U) = \{v \in V \setminus U : \exists u\in U\ s.t.\ \{u,v\}\in
E(G)\}$ and $N_M(U) = \{v \in V \setminus U : \exists u\in U\ s.t.\
\{u,v\}\in E(M)\}$ (or $N_B(U)$).

For a graph $G=(V,E)$ let $\overline{G}=(\overline{V},\overline{E})$ denote the complement graph of $G$, that is, $\overline{V}=V$ and $\overline{E}=\{\{u,v\}~|~u\neq v\in V,~ \{v,u\}\notin E\}$. We also write $\Delta(G)$ for the maximum degree in $G$. For a set of vertices $U\subseteq V$, we denote by $G[U]$ the corresponding vertex-induced subgraph of $G$ and we denote by $E_G[U]$ the edges of $G[U]$.

We assume that $n$ is large enough where needed.  We say that an event holds \emph{with high probability} (w.h.p.) if its probability  tends to one as $n$ tends to infinity.  For the sake of simplicity and clarity of presentation, and in order
to shorten some of the proofs, no real effort is made to optimize
the constants appearing in our results. We also sometimes omit floor
and ceiling signs whenever these are not crucial.

\section {Tools}

\subsection{Binomial and Hypergeometric distribution bounds}
We use extensively the following standard bound on the lower and the
upper tails of the Binomial distribution due to Chernoff (see, e.g.,
\cite{AloSpe2008}, \cite{JLR}):

\begin{lemma}\label{Che}
	Let $X\sim \Bin(n,p)$ and $\mu=\mathbb{E}(X)$, then
	\begin{enumerate}
		\item $\Pr\left(X<(1-a)\mu\right)<\exp\left(-\frac{a^2\mu}{2}\right)$ for every $a>0.$
		\item $\Pr\left(X>(1+a)\mu\right)<\exp\left(-\frac{a^2\mu}{3}\right)$ for every $0 < a < 1.$
	\end{enumerate}
\end{lemma}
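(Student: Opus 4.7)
The plan is to prove both tail bounds by the standard Chernoff exponential moment method, then finish with two real-analytic inequalities that produce the specific constants $2$ and $3$. Writing $X=\sum_{i=1}^{n}X_i$ with $X_i$ i.i.d.\ Bernoulli($p$), for any $t>0$ I would apply Markov's inequality to $e^{tX}$ (or $e^{-tX}$) to obtain
\[
\Pr(X>(1+a)\mu)\leq \frac{\mathbb{E}[e^{tX}]}{e^{t(1+a)\mu}},\qquad \Pr(X<(1-a)\mu)\leq \frac{\mathbb{E}[e^{-tX}]}{e^{-t(1-a)\mu}}.
\]
Independence gives $\mathbb{E}[e^{\pm tX}]=(1-p+pe^{\pm t})^{n}$, and the elementary inequality $1+x\leq e^{x}$ with $x=p(e^{\pm t}-1)$ yields $\mathbb{E}[e^{\pm tX}]\leq \exp\!\bigl(\mu(e^{\pm t}-1)\bigr)$.

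Next I would optimize the free parameter $t$. For the upper tail the choice $t=\ln(1+a)$ gives
\[
\Pr(X>(1+a)\mu)\leq \left(\frac{e^{a}}{(1+a)^{1+a}}\right)^{\!\mu}=\exp\!\bigl(-\mu\bigl((1+a)\ln(1+a)-a\bigr)\bigr),
\]
and for the lower tail the choice $t=-\ln(1-a)$ (valid since $0<a$, with $1-a\in(0,1)$ when needed) gives
\[
\Pr(X<(1-a)\mu)\leq \left(\frac{e^{-a}}{(1-a)^{1-a}}\right)^{\!\mu}=\exp\!\bigl(-\mu\bigl((1-a)\ln(1-a)+a\bigr)\bigr).
\]
These are the standard Chernoff–Cramér bounds in exponential form; the remaining task is only to compare the exponents with the clean quadratic functions $\tfrac{a^2}{3}$ and $\tfrac{a^2}{2}$ stated in the lemma.

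The last step is therefore the two real inequalities $(1+a)\ln(1+a)-a\geq \tfrac{a^2}{3}$ for $0<a<1$ and $(1-a)\ln(1-a)+a\geq \tfrac{a^2}{2}$ for all $a>0$ (with $a<1$ so $\ln(1-a)$ is defined; for $a\geq 1$ the lower-tail statement is trivial since $X\geq 0$). I would verify each by defining $f(a)$ equal to the difference of the two sides, checking $f(0)=f'(0)=0$, and showing $f''(a)\geq 0$ on the relevant interval: for the lower-tail estimate the Taylor expansion $(1-a)\ln(1-a)+a=\sum_{k\geq 2}\frac{a^{k}}{k(k-1)}$ makes the bound immediate, while for the upper tail one uses $f''(a)=\tfrac{1}{1+a}-\tfrac{2}{3}\geq 0$ on $0<a<1$. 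The main (modest) obstacle is precisely this calculus step: the asymmetry between the two constants comes entirely from the fact that the clean Taylor bound $a^2/2$ works globally on the left tail but overshoots on the right tail, forcing the weaker constant $3$ there. Once these inequalities are in hand, both bounds of the lemma follow directly.
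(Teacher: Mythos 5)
The paper does not actually prove this lemma: it is quoted as a standard Chernoff-type bound with references to Alon--Spencer and Janson--\L uczak--Ruci\'nski, so there is no in-paper argument to compare against. Your route --- the exponential moment method, optimizing $t$, and then comparing the resulting entropy-type exponents with the quadratics $a^2/2$ and $a^2/3$ --- is exactly the standard proof in those references, and the overall structure is sound. Your lower-tail step is fully correct: the expansion $(1-a)\ln(1-a)+a=\sum_{k\geq 2}\frac{a^k}{k(k-1)}\geq \frac{a^2}{2}$ is valid, and the remark that the case $a\geq 1$ is trivial because $X\geq 0$ takes care of ``for every $a>0$'' in item 1.

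There is, however, one concrete flaw in your justification of the upper-tail comparison. For $f(a)=(1+a)\ln(1+a)-a-\frac{a^2}{3}$ you have $f''(a)=\frac{1}{1+a}-\frac{2}{3}$, and this is nonnegative only for $a\leq \frac12$; on $(\tfrac12,1)$ it is strictly negative, so the convexity argument you state does not establish $f\geq 0$ on all of $(0,1)$. The inequality itself is true and the fix is short: $f'(a)=\ln(1+a)-\frac{2a}{3}$ satisfies $f'(0)=0$, is increasing on $[0,\tfrac12]$ and decreasing on $[\tfrac12,1]$, and $f'(1)=\ln 2-\frac{2}{3}>0$, so $f'\geq 0$ throughout $[0,1]$; hence $f$ is nondecreasing and $f(a)\geq f(0)=0$ on $(0,1)$, which is what you need. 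With that repair (or any equivalent verification of $(1+a)\ln(1+a)-a\geq \frac{a^2}{3}$ on $(0,1)$), your proof is complete and matches the standard argument the paper implicitly relies on.
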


 Let $HG(N,K,n)$ be the Hypergeometric distribution with parameters $N$, $K$ and $n$, where $N$ is the size of the population containing exactly $K$ successes and $n$ is the number of draws. The following lemma is a Chernoff-type bound on the upper and lower tails of the Hypergeometric distribution.

\begin{lemma}\label{CheHyper}
	\label{chernoff_hypg}
	Let $N\ge 0$, and let $0\le K,n\le N$ be natural numbers. Let
	$X\sim HG(N,K,n)$, $\mu=\mathbb E[X]=nKN^{-1}$. Then, inequalities
	1 and 2 from Lemma \ref{Che} hold.
\end{lemma}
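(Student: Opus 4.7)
The plan is to fix $T := C_0 n\ln n / m$ rounds (with $C_0 = C_0(k)$ a large constant), play the $(m{:}1)$ random-Maker game for $T$ rounds, and show that Maker's graph $M_T$ is $k$-connected w.h.p.\ against any Breaker strategy. Since $k$-connectivity means that for every partition $V(K_n) = A \sqcup S \sqcup B$ with $|S|\leq k-1$ and $|A|,|B|\geq 1$, $M_T$ contains an edge between $A$ and $B$, I would split by $a := |A| \leq (n-k+1)/2$ into a ``small $a$'' regime handled via minimum degree and a ``large $a$'' regime handled by cross-edge counting. Throughout, Breaker and Maker together claim at most $T(m+1) = O(n\ln n) = o(n^2)$ edges, so $|E(F_t)| = (1-o(1))\binom{n}{2}$ at every round $t \leq T$.

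\emph{Small $a$ regime.} Choose a threshold $D \sim m$. Rule out partitions with $a \leq D$ by establishing $\delta(M_T) \geq D + k - 1$ w.h.p., since in any such separating cut each vertex of $A$ has degree $\leq a + k - 2$. To prove this min-degree, exploit that Breaker's total budget of $T = O(n\ln n / \ln\ln n)$ edges forces at most $O(T/n) = O(\ln n / \ln\ln n)$ vertices to have Breaker-degree $\geq n/2$. For each ``lightly attacked'' vertex $v$, the expected Maker-degree over the $T$ rounds is $\Theta(C_0 \ln n)$, so Lemma~\ref{CheHyper} gives $\Pr[\deg_{M_T}(v) < D+k-1] \leq n^{-\Omega(C_0)}$, which union-bounds to $o(1)$ over $n$ vertices for $C_0$ large. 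For each ``heavily attacked'' vertex $v$, restrict to the first $n/4$ rounds, during which $\deg_{F_t}(v) \geq n/2 - 1$; the expected number of Maker-edges at $v$ is $\Omega(m) = \Omega(A \ln\ln n)$, giving a failure probability of $(\ln n)^{-\Omega(A)}$ per heavy vertex, summable to $o(1)$ over the $O(\ln n/\ln\ln n)$ such vertices for $A$ large.

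\emph{Large $a$ regime.} For each partition $(A,S,B)$ with $a > D$, count Maker cross-edges directly. Double-counting yields that the average number of Breaker cross-edges per cut is $O(T/n) = O(\ln n/\ln\ln n)$, much smaller than $a(n-a-k+1) \geq an/2$. Hence by Markov, all but an $o(1)$-fraction of cuts retain at least $an/3$ cross-edges available throughout the game; for each such cut each Maker pick is a cross-edge with probability $\geq a/(3n)$. Over the $Tm = C_0 n\ln n$ Maker picks the expected number of Maker cross-edges is then $\geq C_0 a\ln n / 3$ and the bad-probability is $\leq \exp(-\Omega(C_0 a\ln n))$; the union bound over $\binom{n}{a}\binom{n-a}{k-1} \leq e^{(a+k)\ln n}$ cuts is $o(1)$ for $C_0$ large compared to $k$. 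The vanishing fraction of heavily-attacked cuts (those with $\geq an/2$ Breaker cross-edges) is controlled separately: Breaker's budget permits at most $O(T/(an))$ such cuts, and for each, the probability that Maker picks no cross-edge during Breaker's $\sim an$-round attack is at most $\exp(-\Omega(ma^2))$ by integrating the per-round pick probability $\Omega(m\cdot c_t/n^2)$ over the attack interval; summing over these few cuts is $o(1)$ for $A$ large.

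The main obstacle is fitting these two regimes together across $a \sim \ln\ln n$: the min-degree bound attainable is only $\Theta(m) = \Theta(\ln\ln n)$, since adversarial Breaker can force low degree at individual targets, while the naive union bound for cross-edges requires $a \gg \ln n/m$, leaving a gap that must be bridged by the Markov-plus-budget argument in the large-$a$ regime above. All concentration estimates throughout are instances of the hypergeometric Chernoff bound of Lemma~\ref{CheHyper}, applied round-by-round after conditioning on the game history; the negative association of Maker's $m$ picks within a single round, combined with the uniform estimate $|E(F_t)| = (1-o(1))\binom{n}{2}$, lets us treat the sequence of Maker picks as if they were independent with the appropriate marginal probability.
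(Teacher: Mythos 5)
Your proposal does not address the statement at all. The statement to be proved is Lemma~\ref{CheHyper}, the Chernoff-type tail bound for a hypergeometric random variable $X\sim HG(N,K,n)$ (namely that inequalities 1 and 2 of Lemma~\ref{Che} hold with $\mu=nK/N$). What you have written is instead a proof sketch for the random-Maker $k$-connectivity game (Theorem~\ref{thm:RMCon}): fixing $T=C_0 n\ln n/m$ rounds, a small-cut/large-cut case split, union bounds over separators, and so on. Worse, your argument explicitly \emph{invokes} Lemma~\ref{CheHyper} as its concentration tool (``all concentration estimates throughout are instances of the hypergeometric Chernoff bound of Lemma~\ref{CheHyper}''), so as a purported proof of that lemma it would be circular.

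For the record, the lemma itself is a standard fact and the paper treats it as such, citing the same sources as for the binomial Chernoff bound. A correct short argument: writing $X$ as the number of successes in a sample of size $n$ drawn without replacement, one uses Hoeffding's observation that for every convex function $f$ one has $\mathbb{E}[f(X)]\le \mathbb{E}[f(Y)]$ where $Y\sim \Bin(n,K/N)$; applying this with $f(x)=e^{\lambda x}$ shows that the moment generating function of $X$ is dominated by that of $Y$, and the usual Chernoff computation then yields both the lower- and upper-tail bounds of Lemma~\ref{Che} with the same $\mu=nK/N$. Alternatively one can appeal to negative association of the sampling indicators. Either route is a few lines; none of the game-theoretic material in your proposal is relevant, and separately, if it were offered as a proof of Theorem~\ref{thm:RMCon} it diverges substantially from the paper's expander-plus-Box-game strategy and would need its own careful verification (in particular the treatment of adversarially ``heavily attacked'' vertices and cuts is only sketched).
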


\subsection{Properties of graphs and subgraphs}

First we state a standard fact about subgraphs of large minimum degree. We use this observation in the proof of Theorem \ref{thm:RBHam}.

\begin{observation}\label{re:SubGraphMinDeg} [See, e.g., Ex. 1.3.44 in \cite{West}]
Let $r>0$, then every graph with average degree at least $2r$ contains a subgraph with minimum degree at least $r+1$.
\end{observation}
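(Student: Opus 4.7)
The plan is to prove this by a greedy vertex-deletion argument. Starting from the given graph $G$ on $n$ vertices, the average-degree hypothesis translates into the edge count $|E(G)| \ge rn$. I would iteratively delete any vertex whose current degree in the remaining subgraph $H$ is at most $r$, and stop as soon as no such vertex exists. The output of this process is an induced subgraph in which, by construction, every vertex has degree at least $r+1$, which is exactly the desired conclusion.

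The only thing to verify is that the process does not strip $G$ down to the empty graph. For that, I would track the invariant $|E(H)| \ge r \cdot |V(H)|$ throughout the process. It holds at the start by hypothesis, and it is preserved by each deletion: removing a vertex of degree at most $r$ discards at most $r$ edges and exactly one vertex, so the invariant is maintained by a one-line subtraction $|E(H)| - r \ge r(|V(H)| - 1)$. As long as $|V(H)| \ge 1$, the invariant forces $|E(H)| \ge r > 0$, which in turn forces $|V(H)| \ge 2$ since a graph on a single vertex has no edges. Hence the process cannot terminate at the empty graph, and must instead halt at a nonempty induced subgraph whose minimum degree is at least $r+1$.

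There is essentially no main obstacle here: this is a standard textbook exercise, which is why the authors only cite West. The only subtlety worth flagging is precisely the termination check above — without the edge-count invariant, one could a priori worry that the deletions continue until nothing remains. Once that invariant is in place, the argument is immediate.
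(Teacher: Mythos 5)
Your proof is correct and is precisely the standard vertex-deletion argument with the edge-count invariant $|E(H)|\ge r\,|V(H)|$ that underlies the exercise the paper cites; the paper itself gives no proof of this observation, so there is nothing different to compare against. One small remark: the final step ``no vertex of degree $\le r$ remains, hence minimum degree $\ge r+1$'' uses integrality of degrees and so yields $r+1$ only when $r$ is an integer (for non-integer $r$ it yields $\lceil r\rceil$, which is in fact all the statement can promise, as $K_2$ with $r=\tfrac12$ shows) --- this is a defect in the phrasing of the observation rather than in your argument.
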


The next two  claims are used to prove Theorem \ref{thm:RB-PM}. In the claims we consider a bipartite graph $G$ satisfying some pseudo-random properties.

\begin{claim}\label{claim:TisSmall}
	Let $0<\varepsilon,\alpha<1$ be constants and let $G=(A_0\cup A_1,E)$ be a bipartite graph with parts of size $n$, satisfying the following property:
		For every $X_0\subseteq A_0$, $X_1\subseteq A_1$ such that $|X_0|= n^{\alpha}$, $|X_1|= n^{\alpha/2}$, we have $e(X_0,X_1)\geq {\varepsilon} |X_0|\cdot |X_1|$. 
	
	Then for every two subsets $U_i\subseteq A_i$, $|U_0|=|U_1|=n^{\alpha}$ the following holds ($i\in \{0,1\}$). 
	\begin{enumerate}[(a)]
		\item  The sets of vertices  $T_i=\{v\in U_i~|~e(v, U_{1-i})<\frac {\varepsilon}2 n^{\alpha}\}$ are of size less than $n^{\alpha /2}$.
		\item In every set $W_i\subseteq A_i$ of size $\frac  {\varepsilon}5n$, there is a vertex $w\in W_i$ such that $e(w,U_{1-i})\geq \frac {\varepsilon}2 n^{\alpha}$.
	\end{enumerate}
\end{claim}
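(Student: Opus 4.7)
Both parts reduce to the same double-counting argument: pair a carefully chosen subset on the $A_i$-side with $U_{1-i}$ (which has size $n^{\alpha}$), use the pseudo-random hypothesis to force many edges between them, and use the hypothetical failure of the conclusion to force strictly fewer edges, producing a contradiction.

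For part (a), I would assume toward contradiction that $|T_i|\ge n^{\alpha/2}$ for some $i\in\{0,1\}$, and fix an arbitrary $T'_i\subseteq T_i$ of size exactly $n^{\alpha/2}$. Applying the pseudo-random hypothesis to the pair $(U_{1-i},T'_i)$ of sizes $n^{\alpha}$ and $n^{\alpha/2}$ across the bipartition gives
\[
e(T'_i,U_{1-i})\ \ge\ \varepsilon\cdot n^{\alpha}\cdot n^{\alpha/2}.
\]
On the other hand, every $v\in T'_i\subseteq T_i$ satisfies $e(v,U_{1-i})<\tfrac{\varepsilon}{2}n^{\alpha}$, so summing the degrees of vertices of $T'_i$ into $U_{1-i}$ yields $e(T'_i,U_{1-i})<\tfrac{\varepsilon}{2}n^{\alpha}\cdot n^{\alpha/2}$. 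These two bounds contradict each other, whence $|T_i|<n^{\alpha/2}$.

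For part (b), I would again argue by contradiction: suppose every $w\in W_i$ satisfies $e(w,U_{1-i})<\tfrac{\varepsilon}{2}n^{\alpha}$. Since $|W_i|=\tfrac{\varepsilon}{5}n\gg n^{\alpha/2}$ for large $n$, pick any $W'\subseteq W_i$ of size exactly $n^{\alpha/2}$; then exactly the same pair of bounds on $e(W',U_{1-i})$ as in (a) — a lower bound $\varepsilon\,n^{3\alpha/2}$ from the hypothesis and an upper bound $\tfrac{\varepsilon}{2}n^{3\alpha/2}$ from the per-vertex assumption — contradict each other, so some $w\in W_i$ must satisfy $e(w,U_{1-i})\ge\tfrac{\varepsilon}{2}n^{\alpha}$.

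The only delicate point — really the only step with any friction in the whole proof — is that the pseudo-random hypothesis as literally written fixes $|X_0|=n^{\alpha}$ and $|X_1|=n^{\alpha/2}$, so for the case $i=0$ the roles of $A_0$ and $A_1$ in the pair $(T'_i,U_{1-i})$ (respectively $(W',U_{1-i})$) are swapped relative to the hypothesis. Since the conclusion is symmetric in $i$, I would use the hypothesis in its natural symmetric form, which is how it is verified in all intended applications of the claim. Once this symmetric reading is granted, each part is just a single line of double counting on one well-chosen pair of subsets.
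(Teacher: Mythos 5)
Your proof is correct and follows essentially the same double-counting argument as the paper: for (a) take a subset of $T_i$ of size exactly $n^{\alpha/2}$ and play the hypothesis' lower bound $\varepsilon n^{3\alpha/2}$ against the degree upper bound $\frac{\varepsilon}{2}n^{3\alpha/2}$, and (b) is the same computation applied to a size-$n^{\alpha/2}$ subset of $W_i$ (the paper simply says (b) follows immediately from (a)). Your remark about the asymmetry of the stated hypothesis when $i=0$ is a fair observation; the paper glosses over it with ``the proof for $T_2$ is similar,'' and the symmetric form is indeed what is verified in the application via Lemma~\ref{lem:NumOfFreeEdg}, so your reading is the intended one.
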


 
 \begin{proof}
 	For item $(a)$,  if $|T_1|\geq n^{\alpha /2}$,  look at the subset $T'\subset T_1$, $|T'|=n^{\alpha/2}$. Then, $e(T',U_2)\geq {\varepsilon} |T'|\cdot |U_2|={\varepsilon}n^{3\alpha /2}$. But, $e(T',U_2)= \sum_{v\in T'}e(v,U_2)<|T'|\cdot \frac {\varepsilon}2 n^{\alpha}=\frac {\varepsilon}2 n^{3\alpha/2}$ -- a contradiction. Therefore, $|T_1|<n^{\alpha/2}$. The proof for $T_2$ is similar. Item $(b)$ follows immediately from $(a)$.
 \end{proof}
 
 \bigskip
 
 In the next claim we show a version of Hall's condition.
 
 \begin{claim}\label{claim:PMinR}
 	Let $\varepsilon>0$ and let $G(A_0\cup A_1,E)$ be a bipartite graph with parts of size $n$. Assume that $G$ satisfies the following properties:
 	\begin{enumerate}
 		\item for every $v\in A_i$ ($i\in \{1,2\}$), $d(v)\geq  {\varepsilon} n$, and
 		\item For every $X_0\subseteq A_0$, $X_1\subseteq A_1$ such that $|X_i|= \varepsilon n$,  we have $e(X_0,X_1)\geq 1$.
 	\end{enumerate} 
 	Then $G$ contains a perfect matching.
 \end{claim}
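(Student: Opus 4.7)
The plan is to prove that $G$ satisfies Hall's condition and then invoke Hall's marriage theorem. Since the two sides have equal size $n$, it suffices to verify $|N(S)| \geq |S|$ for every $S \subseteq A_0$ (by symmetry of the hypotheses the other direction follows in the same way, but a single-sided Hall check already yields a perfect matching).

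I would split the verification according to the size of $S$ into three regimes. First, if $|S| \leq \varepsilon n$, pick any vertex $v \in S$; by hypothesis~(1) we have $|N(S)| \geq |N(v)| \geq \varepsilon n \geq |S|$. Second, if $\varepsilon n < |S| \leq (1-\varepsilon) n$, assume for contradiction that $|N(S)| < |S|$, so that $|A_1 \setminus N(S)| > n - |S| \geq \varepsilon n$. Choose an arbitrary $S' \subseteq S$ with $|S'| = \varepsilon n$ and an arbitrary $T \subseteq A_1 \setminus N(S)$ with $|T| = \varepsilon n$. By hypothesis~(2), $e(S', T) \geq 1$, but this contradicts $T \cap N(S) = \emptyset$. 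Third, if $|S| > (1-\varepsilon) n$, I would argue $N(S) = A_1$: otherwise pick $v \in A_1 \setminus N(S)$, and by hypothesis~(1) the set $N(v) \subseteq A_0 \setminus S$ has size at least $\varepsilon n$, contradicting $|A_0 \setminus S| < \varepsilon n$. Hence $|N(S)| = n \geq |S|$.

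There is no real obstacle here; the argument is a routine three-range Hall verification. The mild point worth being careful about is making sure the sizes used in the middle case really are valid, namely that there is room to find a subset of $A_1 \setminus N(S)$ of size $\varepsilon n$, which is precisely what the restriction $|S| \leq (1-\varepsilon) n$ guarantees; and at the boundary between the second and third cases one should handle floors and ceilings (or just choose $\varepsilon n$ to be an integer, as the paper allows itself to do). Once Hall's condition is established for all $S \subseteq A_0$, the matching theorem immediately furnishes a perfect matching of $G$, completing the proof.
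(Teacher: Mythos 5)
Your proposal is correct and follows essentially the same route as the paper's proof: a three-range verification of Hall's condition, with the minimum-degree hypothesis handling small and very large sets and the bipartite-expansion hypothesis (applied to subsets of size exactly $\varepsilon n$) handling the middle range. If anything, your handling of the middle case is slightly more careful than the paper's, since you explicitly pass to subsets of size exactly $\varepsilon n$ before invoking hypothesis~(2).
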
  
 
 \begin{proof}
 	We need to show that for every $X\subseteq A_0$ we have $|N(X)|\geq |X|$ (Hall's condition). First, let $X\subseteq A_0$ be a set of size $|X|\leq  {\varepsilon} n$. Then by item 1, $|N(X)|\geq |X|$. Next, if $|X|> (1- {\varepsilon})n$ then according to item 1, $|N(X)|=n$ and we are done.
 	Finally, assume $ {\varepsilon} n<|X|\leq (1- {\varepsilon})n$ but $|N(X)|<|X|$. But then $|N(X)|\leq (1- {\varepsilon})n$ and thus for $Z=A_1\setminus N(X)$, we have $|Z|>  {\varepsilon}n$ and from item 2 we have $e(X,Z)\geq 1$. This is a contradiction.
 \end{proof}

\subsection{Expanders}

For positive constants $R$ and $c$, we say that a graph $G=(V,E)$ is
an $(R,c)-expander$ if $|N_G(U)|\geq c|U|$ holds for every
$U\subseteq V$, provided $|U|\leq R$. When $c=2$ we sometimes refer
to an $(R,2)$-expander as an $R$-expander. Given a graph $G$, a
non-edge $e = \{u,v\}$ of $G$ is called a \emph{booster} if adding $e$ to
$G$ creates a graph $G'$ which is Hamiltonian, or contains a path
longer than a maximum length path in $G$.

The following lemma states that if $G$ is a ``good enough" expander,
then it is also a $k$-vertex-connected graph.

\begin{lemma}\label{lemma:connectivity} {\bf [Lemma 5.1 from \cite{BFHK}]}
	For every positive integer $k$, if $G = (V,E)$ is an $(R,
	c)-expander$ with $c \geq k$ and $Rc \geq \frac 12(|V | + k)$, then
	$G$ is $k$-vertex-connected.
\end{lemma}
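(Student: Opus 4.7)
The plan is to use Lemma~\ref{lemma:connectivity} as the final step: I will show that w.h.p.\ Maker's graph $M$ at some point during the game becomes an $(R,k)$-expander with $R=\lceil (n+k)/(2k)\rceil$, from which Lemma~\ref{lemma:connectivity} (applied with $c=k$) yields $k$-vertex-connectivity. Since $k$-connectivity is monotone increasing, it suffices to establish the expansion at any intermediate time. I would stop the analysis after $T=\Theta_k(n\ln n/\ln\ln n)$ rounds, which is far short of the game's total $\Theta(n^2/\ln\ln n)$ duration; in particular Breaker's total edge count is only $T=o(n^2)$.

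The expansion statement splits into two regimes. The easier ``medium-$|U|$'' regime ($|U|$ above some constant) is handled by a standard random-graph union bound: fix $U,W\subseteq V$ with $|U|=s$ and $|W|\geq n-ks\geq n/2$; since Breaker's $T=o(n^2)$ edges remove only a negligible fraction of $E(U,W)$, each uniformly random Maker pick lands in $E(U,W)$ with probability $\Omega(s/n)$. The probability that none of the $mT$ Maker picks falls into $E(U,W)$ is therefore at most $\exp(-\Omega(smT/n))$, and summing over the at most $(Cn/s)^{ks}$ choices of $(U,W)$ gives $o(1)$ provided $mT\geq C'kn\ln n$ with $C'=C'(k)$. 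The hypergeometric dependence within a single Maker round is absorbed via Lemma~\ref{CheHyper}.

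The delicate ``small-$|U|$'' regime requires Maker-degree at least $k$ at every vertex, and this is the natural dual of the isolation game of Theorem~\ref{thm:RMIsoVertex}: the threshold for Breaker to empty some ``box'' (isolate a vertex) is $m\sim \tfrac{1}{2}\ln\ln n$, and the threshold for Breaker to reduce some box to fewer than $k$ Maker edges is of the same order, with a $k$-dependent constant. Mirroring the box-game analysis behind Theorem~\ref{thm:RMIsoVertex}: even when Breaker focuses his entire budget on a single vertex $v$, the expected Maker-degree at $v$ against any Breaker strategy is $\Theta(m)=\Theta(A\ln\ln n)$ (each of Maker's $\Theta(mn)$ picks in the relevant window hits the $v$-box with probability $\Theta(1/n)$), and standard concentration yields $\Pr[\deg_M(v)<k]\leq (\ln n)^{-\Theta(A)}$. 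A conditional/martingale union bound over vertices and Breaker's adaptive choices — the same technical core that underpins Theorem~\ref{thm:RMIsoVertex} — then gives $o(1)$ total failure probability for $A$ large enough in terms of $k$. Combining both regimes with Lemma~\ref{lemma:connectivity} completes the proof. The main obstacle is therefore the small-$|U|$ (min-degree) regime, which is where the $\Theta(\ln\ln n)$ bias threshold originates and which requires careful tracking of Breaker's adaptive distribution of his moves across many vertices.
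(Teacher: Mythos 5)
There is a fundamental mismatch here: your proposal does not prove the statement at all. The statement to be proved is Lemma~\ref{lemma:connectivity} itself --- a purely deterministic, graph-theoretic fact (quoted from \cite{BFHK}) asserting that any $(R,c)$-expander with $c\geq k$ and $Rc\geq\frac12(|V|+k)$ is $k$-vertex-connected. Your text instead sketches a probabilistic argument for the random-Maker $k$-connectivity game (essentially Theorem~\ref{thm:RMCon}), and it explicitly invokes Lemma~\ref{lemma:connectivity} ``as the final step''. In other words, the very statement you were asked to prove is used as a black box, which makes the proposal circular with respect to the task; all the machinery you describe (box games, hypergeometric concentration, union bounds over Maker's random picks, the $\Theta(\ln\ln n)$ bias threshold) is irrelevant to the lemma, which involves no randomness, no game, and no asymptotics in $n$.

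What is missing is the short deterministic argument. One way to run it: suppose $G$ is not $k$-vertex-connected, so there is a set $S$ with $|S|\leq k-1$ whose removal disconnects $G$ (the case $|V|\leq k$ is excluded by the hypotheses). Let $A$ be the vertex set of a smallest component of $G-S$ and let $B=V\setminus(A\cup S)$, so that $N_G(A)\subseteq S$ and $N_G(B)\subseteq S$. If $|A|\leq R$, expansion gives $|N_G(A)|\geq c|A|\geq c\geq k>|S|$, a contradiction; hence $|A|\geq R$ and likewise $|B|\geq R$. Now choose $A'\subseteq A$ and $B'\subseteq B$ with $|A'|=|B'|=R$; since $N_G(A')\subseteq (A\setminus A')\cup S$ and $N_G(B')\subseteq (B\setminus B')\cup S$, expansion yields $|A|\geq R+cR-|S|$ and $|B|\geq R+cR-|S|$, and summing, $|V|=|A|+|B|+|S|\geq 2Rc+2R-|S|\geq 2Rc+2R-k+1$. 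Combined with $2Rc\geq |V|+k$ this forces $2R\leq -1$, which is absurd. Some bookkeeping of this kind (this is the content of Lemma 5.1 in \cite{BFHK}) is what a proof of the statement requires; nothing in your proposal supplies it.
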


The next lemma due to P\'osa (a proof can be found for example in
\cite{Bol}), shows that every connected and non-Hamiltonian expander
has many boosters.

\begin{lemma}\label{lemma:boosters}
	Let $G = (V, E)$ be a connected and non-Hamilton $R$-expander. Then
	$G$ has at least $\frac{(R+1)^2}{2}$ boosters.
\end{lemma}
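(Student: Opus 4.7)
The plan is to apply Lemma \ref{lemma:connectivity} and reduce $k$-vertex-connectivity of Maker's final graph $M$ to showing that $M$ is w.h.p. an $(R,k)$-expander with $R = \lceil (n+k)/(2k) \rceil$, i.e., $|N_M(U)| \geq k|U|$ for every $U \subseteq V(K_n)$ with $|U| \leq R$. I would verify this expansion in three regimes indexed by $s := |U|$.

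For the minimum-degree regime ($s = 1$) I need $\delta(M) \geq k$. Since $\deg_M(v) = n-1-\deg_B(v)$, the bad event $\deg_M(v) < k$ is equivalent to $\deg_B(v) \geq n-k$, so Breaker must invest almost all of the $n-1$ edges at $v$ into his own graph. Adapting the analysis behind Theorem \ref{thm:RMIsoVertex}, run in the opposite direction, Breaker's best focused attack on a fixed $v$ yields $\Pr[\deg_M(v) < k] \leq O_k(e^{-m} m^{k-1})$ by a Chernoff-type estimate on the total count of Maker picks landing at $v$ (whose mean is $\Theta(m)$). The key extra ingredient against multi-target strategies is a ``free accumulation'' principle: each focused phase of $\Theta(n)$ rounds dedicated to some target $v_1$ also deposits an expected $\Theta(m)$ Maker-edges at every other vertex, so successive focused targets $v_i$ enter their own phases already carrying Maker-degree of order $mi$. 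This essentially limits Breaker's realistically threatened set to the very first focused target, and a union bound over the $O(1)$ surviving candidates then yields $\Pr[\delta(M) < k] = o(1)$ for $A = A(k)$ large enough.

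For the small-set regime ($2 \leq s \leq n^\alpha$, $\alpha$ small) I fix $U \subseteq V$ of size $s$; the failure event asserts the existence of $S \subseteq V \setminus U$ with $|S|=ks$ and $E_M(U, V \setminus (U \cup S)) = \emptyset$. This bipartite subgraph contains $s(n - (k+1)s) = \Theta(sn)$ edges of $K_n$, and a Chernoff-type estimate on the number of Maker picks that land in it (valid against any Breaker strategy by coupling with the worst-case Breaker who prioritizes these edges) gives per-$(U,S)$ failure probability at most $\exp(-\Omega(ms^2))$. Multiplying by $\binom{n}{s}\binom{n-s}{ks} \leq n^{(k+1)s}$ and summing over $s$ gives a total $o(1)$ once $s \geq 2$ and $A$ is large in terms of $k$. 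For the large-set regime ($n^\alpha \leq s \leq R$), Maker's $\approx \binom{n}{2}$ edges are densely spread, and Lemma \ref{chernoff_hypg} applied to the hypergeometric count of Maker edges between $U$ and any candidate small co-neighborhood gives failure probability exponentially small in $s$, easily dominating the counting factor in the union bound.

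The main obstacle is the minimum-degree step. A naive union bound over the $n$ vertices fails, since the per-vertex bound $O_k(e^{-m} m^{k-1})$ is only polynomially small in $\ln n$ and not in $n$. The resolution is the free-accumulation trade-off: Breaker's $\binom{n}{2}/(m+1)$ total moves force attacks to be essentially sequential, and each focused phase donates $\Theta(m)$ expected Maker-edges to every other vertex, rendering subsequent isolation attempts hopeless and effectively limiting Breaker to one realistic target. Formalising this trade-off rigorously, and choosing $A = A(k)$ so the lone first-target failure probability is $o(1)$, is the core technical work behind Theorem \ref{thm:RMCon}.
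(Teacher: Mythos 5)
Your proposal does not address the statement at all. Lemma \ref{lemma:boosters} is a purely deterministic, graph-theoretic assertion: \emph{any} connected, non-Hamiltonian $R$-expander (here an $(R,2)$-expander) has at least $\frac{(R+1)^2}{2}$ boosters. There is no game, no Maker, no Breaker, no randomness, and no parameter $m$ in this statement. What you have sketched instead is a probabilistic argument for Theorem \ref{thm:RMCon} (that Maker's graph in the $(m:1)$ random-Maker game is w.h.p.\ a good expander and hence $k$-connected via Lemma \ref{lemma:connectivity}), i.e.\ a proof attempt for a different result in the paper. Nothing in your write-up engages with the notions the lemma is actually about: boosters, longest paths, or why expansion plus connectivity plus non-Hamiltonicity forces many non-edges whose addition lengthens a maximum path or creates a Hamilton cycle.

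The intended argument is the classical P\'osa rotation--extension technique (the paper cites it to P\'osa, with a proof in Bollob\'as's book): fix a longest path $P$ with endpoint $v_0$, and perform rotations keeping $v_0$ fixed; if $S$ denotes the set of endpoints reachable by rotations, then $N_G(S)$ is contained in the set of path-neighbours of $S$, so $|N_G(S)| < 2|S|$, and the $(R,2)$-expansion forces $|S| \geq R+1$. For each such endpoint $u \in S$ one repeats the rotation argument at the other end, producing at least $R+1$ partners $w$; each pair $\{u,w\}$ is a non-edge whose addition closes a cycle on the vertex set of a longest path, which by connectivity and non-Hamiltonicity yields a longer path or a Hamilton cycle, i.e.\ a booster. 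Counting each unordered pair at most twice gives the bound $\frac{(R+1)^2}{2}$. Since your proposal contains none of these steps, it does not constitute a proof (nor a partial proof) of the stated lemma.
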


The following standard lemma shows that in expander graphs, the sizes of
connected components cannot be too small.

\begin{lemma}\label{lemma:component}
	Let $G = (V, E)$ be an $(R,c)$-expander. Then every connected
	component of $G$ has size at least $R(c+1)$.
\end{lemma}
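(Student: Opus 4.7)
The plan is to prove the statement by contradiction, assuming a connected component $C$ of size strictly less than $R(c+1)$ exists, and then splitting into two cases depending on how $|C|$ compares with the expansion threshold $R$.

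First, suppose $|C| \le R$. Since $|C| \le R$, the $(R,c)$-expansion property applies directly to $U := C$, yielding $|N_G(C)| \ge c|C|$. On the other hand, by the very definition of a connected component, no edge of $G$ leaves $C$, so $N_G(C) = \emptyset$. Combining these gives $c|C| \le 0$, and since $c>0$, this forces $C = \emptyset$, contradicting the fact that $C$ is a component.

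Second, suppose $R < |C| < R(c+1)$. In this case I would pick an arbitrary subset $U \subseteq C$ of size exactly $R$. Applying the expansion property to $U$ gives $|N_G(U)| \ge cR$. Because $N_G(U) \subseteq V \setminus U$ and, as above, no edges leave the component $C$, we must have $N_G(U) \subseteq C \setminus U$. Therefore
\[
|C| \;=\; |U| + |C \setminus U| \;\ge\; R + |N_G(U)| \;\ge\; R + cR \;=\; R(c+1),
\]
contradicting the assumption $|C| < R(c+1)$.

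There is essentially no technical obstacle here: the whole argument is a two-line application of the expander definition together with the observation that the neighborhood of any subset of a component stays inside that component. The only thing to be slightly careful about is handling the case $|C|\le R$ separately, since otherwise one cannot take a subset of $C$ of size exactly $R$ on which to apply the expansion inequality.
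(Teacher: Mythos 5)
Your proof is correct and follows essentially the same argument as the paper: apply the expansion property to a subset of the component of size $\min\{R,|C|\}$, note that its neighborhood stays inside the component, and contradict $|C|<R(c+1)$. The only cosmetic difference is that you split explicitly into the cases $|C|\le R$ and $R<|C|<R(c+1)$, whereas the paper handles both at once by taking $|U|=\min\{R,|C|\}$.
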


\begin{proof}
	Assume towards a contradiction that there exists a connected
	component of size less than $R(c+1)$. Let $V_0\subset V$ be the
	vertex set of this component. Choose an arbitrary subset $U\subseteq
	V_0$ such that $|U|=min\{R,|V_0|\}$. Since $G$ is an
	$(R,c)$-expander and $|U|\leq R$, it follows that $|N_G(U)|\geq
	c|U|$. Moreover, note that $N_G(U)\subseteq V_0$ as $V_0$ is a
	connected component,  therefore
	$$|V_0|\geq |U|+|N_G(U)|\geq |U|+c|U|=(c+1)|U|,$$
	which implies $|U|\leq \frac{|V_0|}{c+1}$. On the other hand,
	since $|V_0|<R(c+1)$ and $|U|=\min\{R,|V_0|\}$, it follows that
	$|U|>\frac {|V_0|}{c+1}$, which is clearly a contradiction.
\end{proof}

 \section{Random Breaker games}
 
 In this section we consider the random-player setting where Breaker plays randomly and in every round claims $b$ elements independently at random, chosen from all available elements. Here we prove Theorems \ref{thm:RBHam}, \ref{thm:RB-PM} and \ref{thm:RB-C^k}.

 \subsection{Random Breaker Hamiltonicity game}
 
 In this section we prove Theorem~\ref{thm:RBHam}.

Since this game is bias monotone, we can assume that $b= (1-\varepsilon)\frac n2$.   First we present a strategy for Maker and then prove that during the
 game he can typically follow this strategy. For this,  recall that by $B$ (respectively $M$), we denoted Breaker's (or Maker's) graph at any point during
 the game. We say that some vertex
 $v$ is \emph{free} if for every edge $e\in E(M)$, $v \notin e$.
 
 {\bf Strategy S$_{Ham}$:} Maker's strategy is divided into two stages.
 
 {\bf Stage I:} In this stage Maker's goal in to build a path of
 length $n-n^{1/4}$. For the sake of the argument, Maker thinks of his path at this stage as being directed; the directions will be ignored at later stages. Denote by $R$ the set of vertices that are not in Maker's
 	path.
 \begin{itemize}
 	\item Step 1: After Breaker's first move, Maker chooses an available edge $\{v_0,v_1\}$ such that $e_F(\{v_1\},R)\geq n^{1/5}$. Then Maker updates $P\leftarrow \{\overset{\longrightarrow}{v_0,v_1}\}$.\\
 	
 	In the $k^{th}$ round  ($2\leq k\leq {n-n^{1/4}}$), after a (random) move by Breaker, Maker acts as follows. For every $w\in V$, let $R_{w}=\{\{w,u\}\in E\ |\ u\notin P\ \text{and} \ \{w,u\}\ \text{is available}\}$
 	
 	\item Step $k$: Let $v$ be the last vertex in the (directed) path $P$. Maker finds a vertex $u\in R_{v}$ such that $|R_u|\geq  n^{1/5}$. Then Maker claims the edge $\{v,u\}$ and updates $P\leftarrow P\cup \{\overset{\longrightarrow}{v,u}\}$. 
 \end{itemize}
 The procedure stops when Maker can no longer follow the
 strategy or after $n-n^{1/4}$ times. Following
 this strategy, at the end of this stage, $\Delta (M)\leq 2$.
 
 {\bf Stage II:} In this stage Maker increases his path vertex by
 vertex. 
 \begin{itemize}
 	\item {\bf Step 1:} Maker looks at the endpoints, $v_0,v_s$, of his
 	path. If there is an available edge in $E(v_0,R)$ or in
 	$E(v_s,R)$ then Maker claims this edge and repeats Step 1 (after Breaker's move). Otherwise,
 	all of the endpoints of available edges incident to $v_0$ and $v_s$ are in the
 	path. Let $X_0$ and $X_s$ be the sets of available edges incident to $v_0$ and
 	$v_s$, respectively. We split now these sets into 4 sets: $X_0=Y_0\cup Z_0$ and
 	$X_s=Y_s\cup Z_s$ where $||Y_0|-|Z_0||\leq 1$, $||Y_s|-|Z_s||\leq 1$
 	and $Y_0$ (respectively, $Y_s$) are the edges whose other endpoints are closer to $v_0$ (respectively, $v_s$)
 	on the path. In the next 3 turns of Maker, he closes his path to a cycle as follows (see Figure \ref{fig:rotation} for an illustration).
 	
 	{\bf Case 1:} If all  endpoints of $Y_0$ (other than $v_0$) come before all
 	endpoints of $Y_s$ (other than $v_s$) in the path, then Maker finds an edge $\{v_0,v_i\}$ in $Y_0$
 	and another edge $\{v_j,v_s\}$ in $Y_s$, such that the edge $\{v_{i-1},v_{j+1}\}$ is available  and allows him to close
 	his path to a cycle in 3 steps (the vertices of the cycle are the
 	same as the vertices in the path).
 	
 	{\bf Case 2:} If all endpoints of $Z_s$ (other than $v_s$) come before all
 	endpoints of $Z_0$ (other than $v_0$) in the path, then Maker chooses an edge  $\{v_0,v_j\}$ in $Z_0$
 	and another edge $\{v_i,v_s\}$ in $Z_s$, such that the edge $\{v_{i-1},v_{j+1}\}$ is available  and allows him to close
 	his path to a cycle in 3 steps (the vertices of the cycle are the
 	same as the vertices in the path).
 	
 	In his next 3 turns, Maker claims the three edges and closes his path to a cycle, as described above. Then, Maker
 	continues to Step 2.

\begin{figure}[!h]
	\centering
	
	\begin{tikzpicture}[xscale=1.0,yscale=1.0]
	\tikzset{vertex/.style={fill,circle,inner sep=1pt}}

	\node at (80pt,50pt) {\textbf{Case 1:}};
	
	\node [font=\scriptsize] at (5pt,20pt){$Y_0$};
	\node [font=\scriptsize] at (165pt,20pt){$Y_s$};
	
	\node at (360pt,50pt) {\textbf{Case 2:}};
	
	\node [font=\scriptsize] at (290pt,20pt){$Z_0$};
	\node [font=\scriptsize] at (450pt,20pt){$Z_s$};

	\fill[gray!50] (2,0) ellipse (0.5 and 0.15);
	
	\fill[gray!50] (4,0) ellipse (0.5 and 0.15);
	
	\begin{scope}
	\node[vertex][label={[font=\scriptsize]below:\(v_0\)}] (v0)  at (0,0) {};
	
	\node[vertex][label={[font=\scriptsize]below:\(v_i\)}] (vi) at (2,0) {};
	\node[vertex][label={[font=\scriptsize]below:\(v_{j}\)}] (vj) at (4,0) {};
	
	\node[vertex][label={[font=\scriptsize]below:\(v_s\)}] (vs) at (6,0) {};
	
	\end{scope}
	\path (v0) --  (vi);
	\draw[black] (v0) -- (vs);
	
	\draw  (0,0) .. controls (0.7,1) and (1.6,1) .. (2.5,0);
	
	\draw  (0,0) .. controls (0.5,0.6) and (1,0.6) .. (1.5,0);
	
	\draw  (3.5,0) .. controls (4.4,1) and (5.3,1) .. (6,0);
	
	\draw  (4.5,0) .. controls (5,0.6) and (5.5,0.6) .. (6,0);

	------
	\fill[gray!50] (12,0) ellipse (0.5 and 0.15);
	
	\fill[gray!50] (14,0) ellipse (0.5 and 0.15);
	
	\begin{scope}
	\node[vertex][label={[font=\scriptsize]below:\(v_0\)}] (v0) at (10,0) {};
	
	\node[vertex][label={[font=\scriptsize]below:\(v_i\)}] (vi) at (12,0) {};
	\node[vertex][label={[font=\scriptsize]below:\(v_{j}\)}] (vj) at (14,0) {};
	
	\node[vertex][label={[font=\scriptsize]below:\(v_s\)}] (vs) at (16,0) {};
	
	\end{scope}
	\path (v0) --  (vi);
	\draw[black] (v0)  -- (vs);
	
	\draw  (10,0) .. controls (10.5,1) and (13,1) .. (14.5,0);
	
	\draw  (10,0) .. controls (10.5,0.6) and (12.2,0.6) .. (13.5,0);
	
	\draw (11.5,0) .. controls (12.7,1) and (15.5,1) .. (16,0);

	\draw (12.5,0) .. controls (13.2,0.6) and (15.5,0.6) .. (16,0);
	
	\end{tikzpicture}
	
	\vspace{5mm}
	
	\begin{tikzpicture}[xscale=1.0,yscale=1.0]
	\tikzset{vertex/.style={fill,circle,inner sep=1pt}}
	
	\begin{scope}
	\node[vertex][label={[font=\scriptsize]below:\(v_0\)}] (v0) at (0,0) {};
	\node[vertex][label={[font=\scriptsize,xshift=-2mm, yshift=0mm]below:\(v_{i-1}\)}] (vi-1) at (1.3,0) {};
	\node[vertex][label={[font=\scriptsize]below:\(v_i\)}] (vi) at (2,0) {};
	\node[vertex][label={[font=\scriptsize]below:\(v_{j}\)}] (vj) at (4,0) {};
	\node[vertex][label={[font=\scriptsize,xshift=3mm, yshift=0mm]below:\(v_{j+1}\)}] (vj+1) at (4.7,0) {};
	\node[vertex][label={[font=\scriptsize]below:\(v_s\)}] (vs) at (6,0) {};
	
	\end{scope}
	\path (v0) --  (vi);
	\draw[dashed] (v0) -- (vi-1) -- (vi) -- (vs);
	\draw[  black] (v0) -- (vi-1); 
	\draw[  black] (vi) -- (vj);
	\draw[  black] (vj+1) -- (vs);
	
	\draw  (0,0) .. controls (0.5,1) and (1.2,1) .. (2,0);
	
	\draw (4,0) .. controls (4.7,1) and (5.5,1) .. (6,0);
	
	\draw (1.3,0) .. controls (2,-1.2) and (4,-1.2) .. (4.7,0);
	
	------

	\begin{scope}
	\node[vertex][label={[font=\scriptsize]below:\(v_0\)}] (v0) at (10,0) {};
	\node[vertex][label={[font=\scriptsize,xshift=-2mm, yshift=0mm]below:\(v_{i-1}\)}] (vi-1) at (11.3,0) {};
	\node[vertex][label={[font=\scriptsize]below:\(v_i\)}] (vi) at (12,0) {};
	\node[vertex][label={[font=\scriptsize]below:\(v_{j}\)}] (vj) at (14,0) {};
	\node[vertex][label={[font=\scriptsize,xshift=3mm, yshift=0mm]below:\(v_{j+1}\)}] (vj+1) at (14.7,0) {};
	\node[vertex][label={[font=\scriptsize]below:\(v_s\)}] (vs) at (16,0) {};
	
	\end{scope}
	\path (v0) --  (vi);
	\draw[dashed] (v0) -- (vi-1) -- (vi) -- (vs);
	\draw[ black] (v0) -- (vi-1); 
	\draw[  black] (vi) -- (vj);
	\draw[  black] (vj+1) -- (vs);
	
	\draw  (10,0) .. controls (10.5,1) and (13.2,1) .. (14,0);
	
	\draw (12,0) .. controls (12.7,1) and (15.5,1) .. (16,0);
	
	\draw (11.3,0) .. controls (12,-1.2) and (14,-1.2) .. (14.7,0);
	
	\vspace{3mm}

	\end{tikzpicture}
	
	\caption{Step 1 in Stage II -- Maker claims 3 edges and closes his path to a cycle.  }
	\label{fig:rotation}
\end{figure}
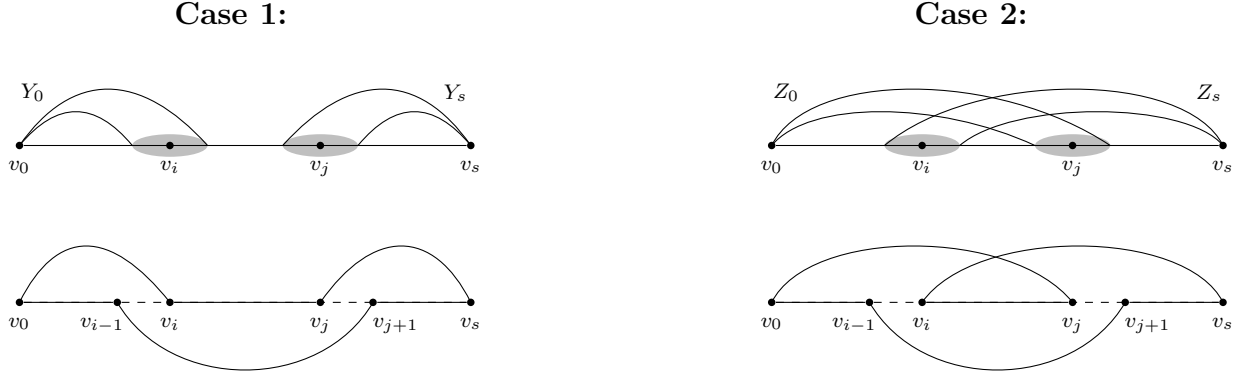

 	\item {\bf Step 2:} Denote  by $C$ the set of vertices in the cycle.
 	Maker claims some available edge $\{c,r\}$ , where $c\in C$ and $r\in
 	R$. Denote by $c'$ one of the neighbors of $c$ in the cycle.
 	Then Maker updates: $v_0\leftarrow c'$, $v_{s-1}\leftarrow c$, $s\leftarrow s+1$ and $v_s\leftarrow r$ and returns to Step 1. 
 
 	Following this strategy, in every iteration Maker increases his maximum degree by at most 2 and therefore at the end of this stage, $\Delta(M)\leq
 	2n^{1/4}+2$.
 
 \end{itemize}
 
 Now we prove that w.h.p.\ Maker can follow this strategy without forfeiting the
 game. In order to do so, first we need to show that during the
 game, the degree of Breaker in each vertex is not too large.

\begin{lemma}\label{lem:MinDegOfFInHam}
	Consider the $(1:b)$ Maker-Breaker game on $E(K_{n})$ where Breaker plays
	randomly and $b=(1-\varepsilon)\frac n2$. Assume that after $(1+o(1))n$ rounds 
	$\Delta(M)=O(n^{\delta})$ for a constant $0<\delta<1$. Then  w.h.p.\ $\Delta(B)\leq (1-\frac {\varepsilon}2)n$.
\end{lemma}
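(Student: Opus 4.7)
The plan is to couple Breaker's random play with a uniformly chosen bijection $\tau : E(K_n) \to \{1, \ldots, \binom{n}{2}\}$ and reduce the statement to a hypergeometric concentration estimate. In the coupled process, Breaker claims in each round the $b$ currently available edges with smallest $\tau$-value, while Maker plays his (deterministic) strategy exactly as in the original game. A short exchangeability argument---at each Breaker move, conditional on the history the relative order of $\tau$ restricted to the currently available set $F_r$ is uniform, so the $b$ smallest $\tau$-values in $F_r$ form a uniform random $b$-subset---shows that the coupled process has the same joint distribution as the original random-Breaker game.

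The next step is a ``prefix containment'' statement: every edge Breaker ever claims has $\tau$-rank at most $(b+1)R$, where $R = (1+o(1))n$ is the total number of rounds. Indeed, if $e$ is claimed by Breaker in round $r$, then any edge with $\tau$-value less than $\tau(e)$ must either already have been claimed in rounds $1, \ldots, r-1$ (at most $(b+1)(r-1)$ such edges) or be one of the other $b-1$ picks of Breaker in round $r$, for otherwise Breaker's greedy rule would have selected that smaller edge instead. Consequently the $\tau$-rank of $e$ is at most $(b+1)r-1 < (b+1)R = (1-\varepsilon+o(1))\frac{n^2}{2}$.

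Fix a vertex $v$ and let $X_v$ count the edges incident to $v$ whose $\tau$-rank lies in $\{1, \ldots, S\}$, where $S := (b+1)R$. By the previous step $d_B(v) \leq X_v$, and since $\tau$ is a uniform bijection, $X_v \sim HG\bigl(\binom{n}{2}, n-1, S\bigr)$ with $\mathbb{E}[X_v] = 2S/n = (1-\varepsilon+o(1))n$. The threshold $(1-\varepsilon/2)n$ exceeds this expectation by a multiplicative factor $1+\alpha$ for a constant $\alpha \to \varepsilon/(2(1-\varepsilon)) > 0$, so Lemma~\ref{CheHyper} yields $\Pr[X_v > (1-\varepsilon/2)n] \leq \exp\bigl(-c(\varepsilon)\, n\bigr)$ for some positive constant $c(\varepsilon)$. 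A union bound over the $n$ vertices then gives $\Pr[\Delta(B) > (1-\varepsilon/2)n] \leq n \exp(-c(\varepsilon)\,n) = o(1)$, which is exactly the conclusion.

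The main technical subtlety will be justifying the coupling and the prefix-containment estimate cleanly; once these are in hand, the remainder is a routine Chernoff-plus-union-bound calculation. It is worth noting that the hypothesis $\Delta(M) = O(n^\delta)$ does not actually enter the argument---only the bound $R = (1+o(1))n$ on the total number of rounds is used.
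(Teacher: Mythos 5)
Your proof is correct, and it reaches the same final calculation as the paper (a hypergeometric upper-tail bound with mean $(1-\Theta(\varepsilon))n$ followed by a union bound over the $n$ vertices), but it gets to the hypergeometric comparison by a genuinely different and more rigorous route. The paper simply asserts that $d_B(v)$ is ``essentially, asymptotically stochastically dominated'' by $HG\bigl(\binom n2-O(n^{\delta+1}),(1-\frac56\varepsilon)\binom n2,n-1\bigr)$, using the round count to bound Breaker's total edge count and the hypothesis $\Delta(M)=O(n^{\delta})$ to control the shrinking population of available edges; the domination itself is left informal. Your permutation coupling (Breaker greedily takes the $b$ available edges of smallest $\tau$-rank, which by the exchangeability argument reproduces the game's law exactly) together with the prefix-containment bound $S=(b+1)R$ turns this into an exact domination $d_B(v)\le X_v\sim HG\bigl(\binom n2,\,n-1,\,S\bigr)$, with no asymptotic hand-waving and no dependence on Maker's behaviour beyond the number of rounds; your observation that the hypothesis $\Delta(M)=O(n^{\delta})$ is not needed is accordingly correct (the paper's own use of it is inessential for the same reason: Maker claims only $(1+o(1))n=o(n^2)$ edges in any case). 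The one small point to tidy up is the application of Lemma~\ref{CheHyper}: its upper-tail inequality is stated for $0<a<1$, while your deviation parameter $\alpha\approx\varepsilon/(2(1-\varepsilon))$ exceeds $1$ when $\varepsilon>2/3$; this is harmless --- replace $\alpha$ by $\min\{\alpha,1/2\}$, say, and the bound $\exp(-c(\varepsilon)n)$ still follows --- but it should be said. With that cosmetic fix, your argument is a complete and arguably cleaner substitute for the paper's proof.
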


\begin{proof}
	Let $v\in V$ and denote by $d_B(v)$ the degree of $v$ in Breaker's graph after $(1+o(1))n$ rounds. Note that after $(1+o(1))n$ rounds Breaker claimed at most $(1-\frac {5}6\varepsilon)\binom n2$ edges. Also, since the maximal degree in Maker's graph is $O(n^{\delta})$ then by the end of the game Maker claimed at most $n^{\delta+1}$ edges. Thus, the number of edges Breaker claimed at each vertex is, essentially, asymptotically stochastically dominated by a hypergeometric random variable with appropriate parameters.  Then for $Z\sim HG\left(\binom n2-O(n^{\delta+1}),(1-\frac {5}6\varepsilon)\binom n2,n-1\right)$ and $\mu=\mathbb{E}(Z)$, using Lemma \ref{CheHyper} we have,
	\begin{align*}
	\Pr\left[d_B(v)> (1-\frac {\varepsilon}2)n\right]&\leq (1+o(1))\cdot\Pr\left[Z> (1-\frac {\varepsilon}2)n\right]\\
	&\leq (1+o(1))\cdot\Pr\left[Z> (1+\frac {\varepsilon}4)\mu\right]\\
	&\leq (1+o(1))e^{-\frac {\varepsilon^2}{48}\mu}\leq (1+o(1))e^{-\frac {\varepsilon^2}{48}(1-2\varepsilon)n}.
	\end{align*}
	Therefore, by union bound, 
	\begin{align*}
	\Pr\left[\exists v\in V,~s.t.~d_B(v)> (1-\frac {\varepsilon}2)n\right]\leq n\cdot (1+o(1))e^{-\frac {\varepsilon^2}{48}(1-2\varepsilon)n}=o(1).
	\end{align*}
\end{proof}

The next lemma shows that during the game, w.h.p.\ between any two sets of polynomial size, there are many available edges.

 \begin{lemma}\label{lem:NumOfFreeEdgInHam}
 	Consider the $(1:b)$ Maker-Breaker game on $E(K_{n})$ where Breaker plays
 	randomly and $b=\Theta (n)$. If at some point during the game
 	$|E(B)|\leq (1-\frac {5}6\varepsilon)\binom n2$, then w.h.p.\ for every constants $0<\beta,\gamma<1$ and for every $X,Y\subseteq V$ such that $|X|= n^{\beta}$, $|Y|= n^{\gamma}$ and $e_M(X,Y)=o(|X|\cdot |Y|)$, the number of available elements in $E(X,Y)$ is at least $\frac {\varepsilon}3 |X|\cdot |Y|$. 
 \end{lemma}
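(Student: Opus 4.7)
The plan is to couple Breaker's random draws with a uniformly random permutation $\pi$ of $E(K_n)$ so that $e_B(X,Y)$ is stochastically dominated by a hypergeometric random variable, and then to apply Lemma~\ref{CheHyper} together with a union bound. Concretely, I think of Breaker as maintaining a pointer into $\pi$ and, in each round, advancing the pointer to claim the next $b$ edges not currently lying in Maker's graph (skipping any that are). Exchangeability of $\pi$ guarantees that this produces exactly the original distribution on Breaker's draws, while Maker plays his (deterministic) strategy in between.

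The key structural observation is that, at every moment of the game, Breaker's set $B$ is contained in an initial segment $\{\pi_1,\dots,\pi_s\}$, where $s \le |E(B)| + |E(M)|$ (each scanned position is either in $B$ or was skipped because it was already in $M$). At the moment when $|E(B)| \le (1-\tfrac{5}{6}\varepsilon)\binom{n}{2}$, the number of rounds elapsed is $|E(B)|/b = O(n)$ (since $b=\Theta(n)$), hence $|E(M)| = O(n)$, and therefore $s \le s_0 := (1-\tfrac{5}{6}\varepsilon)\binom{n}{2} + O(n) = (1-\tfrac{5}{6}\varepsilon + o(1))\binom{n}{2}$ deterministically. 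Consequently $B \subseteq \{\pi_1,\dots,\pi_{s_0}\}$, and since the latter is a uniformly random subset of $E(K_n)$ of deterministic size $s_0$ (independent of Maker's strategy), for any fixed $(X,Y)$ the count satisfies $e_B(X,Y) \le Z$ pointwise, where $Z \sim \mathrm{HG}\bigl(\binom{n}{2},|E(X,Y)|,s_0\bigr)$ with $\mathbb{E}(Z) \le (1-\tfrac{5}{6}\varepsilon + o(1))|E(X,Y)|$.

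Applying Lemma~\ref{CheHyper} yields $\Pr\bigl[Z > (1-\tfrac{2}{3}\varepsilon)|E(X,Y)|\bigr] \le \exp\bigl(-c_\varepsilon\,|E(X,Y)|\bigr)$ for a constant $c_\varepsilon>0$. Combining the elementary bound $|E(X,Y)| \ge \tfrac{1}{2}|X||Y|$ (which holds for all $X,Y$ with the worst case $X=Y$) with the hypothesis $e_M(X,Y) = o(|X||Y|)$ in the identity $e_F(X,Y) = |E(X,Y)| - e_B(X,Y) - e_M(X,Y)$, we obtain $e_F(X,Y) \ge \tfrac{2}{3}\varepsilon\cdot\tfrac{1}{2}|X||Y| - o(|X||Y|) \ge \tfrac{\varepsilon}{3}|X||Y|$ for $n$ sufficiently large, off the deviation event.

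To conclude, I union bound over all pairs $(X,Y)$ with $|X|=n^\beta$, $|Y|=n^\gamma$: there are at most $\binom{n}{n^\beta}\binom{n}{n^\gamma} \le \exp\!\bigl(O(n^{\max(\beta,\gamma)}\log n)\bigr)$ such pairs, while each failure event has probability $\exp\bigl(-\Omega(n^{\beta+\gamma})\bigr)$. Since $\min(\beta,\gamma)>0$ is a positive constant, $n^{\beta+\gamma}$ dwarfs $n^{\max(\beta,\gamma)}\log n$ and the union-bounded failure probability is $o(1)$. The main technical subtlety I anticipate is making the permutation coupling precise, in particular verifying the deterministic upper bound $s \le s_0$ even though Maker's strategy adaptively depends on Breaker's random history; once this is set up, the rest is a routine hypergeometric concentration and union-bound calculation.
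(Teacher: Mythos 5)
Your proof takes essentially the same route as the paper: dominate $e_B(X,Y)$ by a hypergeometric random variable and finish with Lemma~\ref{CheHyper} plus a union bound over all pairs of sets of sizes $n^{\beta},n^{\gamma}$; your permutation coupling simply makes rigorous the stochastic domination that the paper asserts with a $(1+o(1))$ factor (and your $|E(X,Y)|\ge\tfrac12|X||Y|$ step handles overlapping $X,Y$, which the paper implicitly ignores). The only flaw is the last inequality, since $\tfrac{2}{3}\varepsilon\cdot\tfrac{1}{2}|X||Y|-o(|X||Y|)$ is \emph{not} at least $\tfrac{\varepsilon}{3}|X||Y|$; this is repaired by exploiting the slack you already have and taking the deviation threshold at, say, $(1-\tfrac{4}{5}\varepsilon)|E(X,Y)|$ (legitimate because $\mathbb{E}(Z)\le(1-\tfrac{5}{6}\varepsilon+o(1))|E(X,Y)|$), which gives $e_F(X,Y)\ge(\tfrac{2}{5}\varepsilon-o(1))|X||Y|>\tfrac{\varepsilon}{3}|X||Y|$.
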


 \begin{proof}
 	Let $X,Y\subseteq V$ such that $|X|= n^{\beta}$ and $|Y|= n^{\gamma}$, then for $Z\sim HG\left(N,(1-\frac {5}6\varepsilon)\binom n2,|X| |Y|\right)$ where $N=\binom{n}{2}-o(n^2)$ and for $\mu=\mathbb E[Z]$,
 	
 	\begin{eqnarray}
 	\Pr\left[e_B(X,Y)>(1-\frac {\varepsilon}3) |X|\cdot |Y|\right]&\leq& (1+o(1))\Pr\left[Z>(1-\frac {\varepsilon}3)|X|\cdot|Y|\right]\nonumber\\
 	&\leq& (1+o(1)\Pr\left[Z>(1+\frac{\varepsilon}{4})\mu\right] \nonumber\\
 	&\leq& (1+o(1))e^{\frac{-\varepsilon^2}{48}\mu}
 	\leq (1+o(1)) e^{\frac{-\varepsilon^2}{48}(1-2\varepsilon)|X|\cdot|Y|}.\nonumber
 	\end{eqnarray}
 	
 	
 	Therefore, by the union bound 
 	
 	\begin{eqnarray}
 	&&\Pr\left[\exists X,Y,\ |X|= n^{\beta}, |Y|= n^{\gamma},\ s.t.\ e_B(X,Y)>(1-\frac {\varepsilon}3) |X|\cdot |Y| \right]\nonumber\\
 	&\leq& \binom{n}{|X|}\binom{n}{|Y|}\cdot(1+o(1))\cdot\exp\left[-{\frac{\varepsilon^2}{48}(1-2\varepsilon)n^{\beta+\gamma}}\right]\nonumber\\
 	&\leq& (1+o(1))\left( {en^{1-\beta}}\right)^{n^{\beta}}\left( {en^{1-\gamma}}\right)^{n^{\gamma}}\cdot\exp\left[-{\frac{\varepsilon^2}{48}(1-2\varepsilon)n^{\beta+\gamma}}\right]\nonumber\\
 	&=& o(1).\nonumber
 	\end{eqnarray}
 \end{proof}

 We now prove that w.h.p.\ Maker can follow his strategy at any point during the game.
 
 {\bf Stage I:}
 \begin{itemize}
 	
 	\item Step 1: Maker chooses one available edge $\{v_0,v_1\}$ and updates $P=\{v_0,v_1\}$. From Lemma \ref{lem:MinDegOfFInHam} we know that the number of available edges incident to $v_1$ is at least $\frac {\varepsilon}2n$. Therefore, in this case clearly $|R_{v_1}|\geq n^{1/5}$. 
 	
 	\item Step $k$: Let $R$ be the set of vertices that are not in the path $P$. Then $n^{1/4}\leq |R|\leq n$. We now show that if $|R_v|\geq n^{1/6}$ and $|R|\geq n^{1/4}$, then there are at least $n^{1/8}$ vertices $u\in R_v$ such that $|R_u|\geq n^{1/5}$. Let $T_v\subset R_v$ be the set of vertices such that for every $u\in T_v$, $|R_u|\geq n^{1/5}$. If $|T_v|<n^{1/8}$, then $e_F(R_v\setminus T_v,R)=\sum_{x\in R_v\setminus T_v}e_F(\{x\},R)\leq |R_v|\cdot n^{1/5}$. But according to Lemma \ref{lem:NumOfFreeEdgInHam}, w.h.p. $e_F(R_v\setminus T_v,R)\geq \frac {\varepsilon}3|R_v\setminus T_v|\cdot|R|\geq \frac {\varepsilon}4|R_v|\cdot n^{1/4}$. This is clearly a contradiction. Then, Maker chooses one vertex $u\in T_v$ and claims $\{v,u\}$.
 
 	In the next round, Breaker claims $b$ elements randomly. Let $X$ be the number of edges Breaker claimed from $R_u$ in the current round. Then $X\sim HG(\Theta(n^2),b,|R_u|)$ and  $\Pr(X>\varepsilon n^{1/5})=o\left(\frac 1n\right)$ and therefore after Breaker's move, $|R_u|\geq n^{1/6}$. Maker repeats this procedure $n-n^{1/4}$ times, then by union bound we have that w.h.p.\ in every step, $|R_u|\geq n^{1/6}$.

  \end{itemize}
 
 {\bf Stage II:}
 \begin{itemize}
 	\item Step 1: From Lemma \ref{lem:MinDegOfFInHam} we have that w.h.p.\ during the game, each vertex has at least $\frac {\varepsilon}3 n$ available neighbors. Assume that all the endpoints of $X_0$ and $X_s$ are in the path $P=\{v_0,v_1,\dots,v_s\}$.
 	\begin{itemize}
 		\item Case 1: Let $T_0$ be the set of the predecessors of the vertices of $Y_0$ along the path (without $v_0$), and let  $T_s$ be the set of the predecessors of the vertices of $Y_s$ along the path (without $v_s$). Then w.h.p.\ $|T_0|,|T_s|\geq \frac {\varepsilon}3n-1$. Therefore we can use Lemma \ref{lem:NumOfFreeEdgInHam} (note that $E_M(T_0,T_1)=O(n^{1/4})$) and get that w.h.p.\ there exists an available edge $\{v_i,v_j\}$ in the set $E(T_0,T_s)$. In the next 3 turns of Maker, he aims to claim the edges $\{v_i,v_j\},\{v_{i+1},v_0\},\{v_s,v_{j-1}\}$. Then $C=\{v_0,v_{i+1},\dots,v_{j-1},v_s,v_{s-1},\dots,v_{j},v_i,v_{i-1},\dots,v_0\}$ would be a cycle of Maker. The probability for Breaker to claim each such edge is $\Theta\left(\frac b{n^2}\right)$ and therefore (using the union bound over $n^{1/4}$ iterations) w.h.p.\ Maker can achieve his goal. 
 		\item Case 2: Let $T_0$  be the set of the predecessors of the vertices of $Z_0$ along the path (without $v_s$), and let $T_s$  be the set of the predecessors of the vertices of $Z_s$ along the path (without $v_0$). Then, $|T_0|,|T_s|\geq \frac {\varepsilon}3n-1$. Therefore we can use Lemma \ref{lem:NumOfFreeEdgInHam} and get that there exists an available edge $\{v_i,v_j\}$ in the set $E(T_0,T_s)$. In the next 3 turns of Maker, he aims to claim the edges $\{v_i,v_j\},\{v_{i+1},v_0\},\{v_s,v_{j-1}\}$. Then $C=\{v_0,v_{i-1},\dots,v_{j+1},v_s,v_{s-1},\dots,v_{i},v_j,v_{j-1},\dots,v_0\}$ would be a cycle of Maker. The probability for Breaker to claim each such edge is $\Theta\left(\frac b{n^2}\right)$ and therefore (using the union bound over $n^{1/4}$ iterations) w.h.p.\ Maker can achieve his goal. 
 	\end{itemize}
 	\item Step 2: From Lemma \ref{lem:MinDegOfFInHam}, we have that w.h.p.\ for every $v\in R$, $d_F(v)\geq \frac {\varepsilon}3n$. Since $|R|=O(n^{1/4})$ it follows that for some (in fact for any) $v\in R$, w.h.p.\ there exists a vertex $u\in C$ such that $\{u,v\}$ is available. 
 \end{itemize}

 \subsection{Random Breaker perfect-matching game}
 
 In order to prove Theorem \ref{thm:RB-PM}, we will use Maker's strategy in the perfect-matching game played on $E(K_{n,n})$. Therefore, we now analyze the random-Breaker perfect-matching game where the board of the game is $E(K_{n,n})$. Since this game is bias monotone, we can assume that $b= (1-\varepsilon)n$. We prove the following theorem:

 \begin{theorem}\label{thm:RB-PM2}
 	Let $\varepsilon>0$, let $n$ be a sufficiently large integer and let
 	$b=(1-\varepsilon)  n$. Then Maker has a 
 	strategy which is w.h.p.\ a winning strategy for the $(1:b)$ random-Breaker perfect-matching game played on $E(K_{n,n})$ in $(1+o(1)) n$
 	rounds.
 \end{theorem}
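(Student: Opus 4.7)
The plan is to mirror the two-stage template used in the proof of Theorem~\ref{thm:RBHam}, adapted to the bipartite board. I would first establish bipartite analogues of Lemma~\ref{lem:MinDegOfFInHam} and Lemma~\ref{lem:NumOfFreeEdgInHam}: throughout $(1+o(1))n$ rounds w.h.p.\ every vertex retains at least $\frac{\varepsilon}{2}n$ available neighbors on the opposite side, and for every pair of polynomial-sized sets $X\subseteq A_0$, $Y\subseteq A_1$ we have $e_F(X,Y)\geq \frac{\varepsilon}{3}|X||Y|$. Both facts follow from the Chernoff/hypergeometric arguments of the previous subsection with $\binom{n}{2}$ replaced by $n^2$.

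In Stage~I Maker greedily builds a matching $M_*$ of size $n-n^{\alpha}$, for a fixed constant $\alpha\in(2/3,1)$: in each round he claims an arbitrary available edge whose two endpoints are both currently unmatched. Letting $U_0\subseteq A_0$, $U_1\subseteq A_1$ denote the unmatched sets, as long as $|U_0|=|U_1|\geq n^{\alpha}$ the bipartite analogue of Lemma~\ref{lem:NumOfFreeEdgInHam} yields $e_F(U_0,U_1)\geq\frac{\varepsilon}{3}|U_0||U_1|>0$, so such an edge always exists. After $n-n^{\alpha}$ rounds we have $|U_0|=|U_1|=n^{\alpha}$.

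Stage~II consists of $n^{\alpha}$ length-$3$ augmentations of $M_*$, each taking two Maker moves. Pick any $v\in U_0$. The set $Y_v:=N_F(v)\cap(A_1\setminus U_1)$ has size at least $\frac{\varepsilon}{2}n-n^{\alpha}\geq \frac{\varepsilon}{3}n$, and its set of $M_*$-partners $W\subseteq A_0\setminus U_0$ has the same size. The bipartite form of Claim~\ref{claim:TisSmall}(b), applied to $F$ with pseudo-randomness constant $\varepsilon/3$ supplied by the analogue of Lemma~\ref{lem:NumOfFreeEdgInHam}, then produces a vertex $v'\in W$ with at least $\frac{\varepsilon}{6}n^{\alpha}$ available neighbors in $U_1$. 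Writing $u=M_*(v')$, Maker claims $\{v,u\}$ in his current move. A hypergeometric tail bound shows that w.h.p.\ Breaker's subsequent turn eliminates only $o(n^{\alpha})$ of the available $v'$-to-$U_1$ edges, so Maker's next move finds and claims some $\{v',u'\}$ with $u'\in U_1$. Replacing $\{u,v'\}\in M_*$ by $\{v,u\}$ and $\{v',u'\}$ augments the matching by one, so after $n^{\alpha}$ such augmentations $M_*$ is a perfect matching. The total number of rounds is $n-n^{\alpha}+2n^{\alpha}=(1+o(1))n$.

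The main technical obstacle is the careful maintenance of the pseudo-randomness of the available graph $F$ (i.e., the bipartite analogues of Lemmas~\ref{lem:MinDegOfFInHam} and~\ref{lem:NumOfFreeEdgInHam}) throughout Stage~II, together with a union bound over the $n^{\alpha}=o(n)$ augmentation attempts controlling the rare event that Breaker happens to destroy the targeted second edge before Maker claims it. None of the individual estimates is delicate, but this bookkeeping must be performed at every step so that Claim~\ref{claim:TisSmall} can be invoked for every augmentation.
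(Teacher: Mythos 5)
Your Stage~I and the preparatory pseudo-randomness lemmas are fine and match the paper's setup, but Stage~II has a genuine gap: the endgame of the augmentation scheme. You invoke (the bipartite form of) Claim~\ref{claim:TisSmall}(b) to find, at every augmentation, a vertex $v'\in W$ with at least $\frac{\varepsilon}{6}n^{\alpha}$ available neighbours in $U_1$ -- but $U_1$ shrinks by one vertex with each augmentation, so this conclusion is literally impossible as soon as $|U_1|<\frac{\varepsilon}{6}n^{\alpha}$, i.e.\ for a constant fraction of your $n^{\alpha}$ steps, and no weakened version rescues the final steps. Once $|U_1|$ is of constant (or polylogarithmic) size, the Chernoff-plus-union-bound statements of the type of Lemma~\ref{lem:NumOfFreeEdg} are no longer available, and the property you would actually need -- ``every set $W\subseteq A_0$ of size $\frac{\varepsilon}{3}n$ contains a vertex with an available edge into the current $U_1$'' -- is simply false at the end of the game: by Lemma~\ref{lem:MinDegOfF} available degrees are only about $\varepsilon n$, so for the last unmatched vertex $u'$ a linear-size set $W$ can sit entirely inside the Breaker-neighbourhood of $u'$. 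Since $W$ and $U_1$ are produced by the game history and are therefore correlated with Breaker's random edges, you cannot treat them as fixed sets and appeal to freshness either; your sketch offers no mechanism to control these last augmentations, and the one-round hypergeometric bound you mention addresses a different (and easier) issue.

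The paper resolves exactly this difficulty by a structurally different endgame, which is worth comparing. It stops the greedy stage with $|R|=2n^{\alpha}$ unmatched vertices for $\alpha=\frac14<\frac13$ (the opposite regime from your $\alpha\in(\frac23,1)$), runs a cleanup stage guaranteeing every vertex of $R$ has $\Omega(n^{\alpha})$ available neighbours inside $R$, then verifies Hall's condition (Claim~\ref{claim:PMinR}) to extract an entire perfect matching of currently available edges inside $K_{n,n}[R]$ in one shot, and finally claims it over $n^{\alpha}$ rounds using the observation that Breaker, choosing $b\cdot n^{\alpha}\approx n^{1+\alpha}$ random edges out of $\Theta(n^2)$, w.h.p.\ touches none of the $n^{2\alpha}$ edges of this small board -- this is precisely where $\alpha<\frac13$ is needed. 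So the vertex-by-vertex completion is avoided altogether, and the sub-polynomial regime in which your concentration arguments die never arises. Your choice $\alpha>\frac23$ is incompatible with this escape, so the endgame of your argument needs to be redesigned rather than patched.
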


 Note that Theorem \ref{thm:RB-PM} follows easily from Theorem \ref{thm:RB-PM2}. Indeed, at the beginning of the game Maker focuses on two disjoint subsets of  $V(K_n)$, each of size $\lfloor \frac n2\rfloor$, and pretends to play the perfect-matching game on $E(K_{\lfloor \frac n2\rfloor,\lfloor \frac n2\rfloor})$  (Maker plays only on the edges between the two sets and ignores all other edges). Then, in every round, w.h.p.\ at least $(1- \frac{3\varepsilon}2)\lfloor \frac n2\rfloor$ of Breaker's edges will be contained in one of the two sets and therefore his  bias in the simulated game  is actually at most $(1- \frac{\varepsilon}2)\lfloor \frac n2\rfloor$.

 {\bf Proof of Theorem \ref{thm:RB-PM2}}
 First we present a strategy for Maker and then prove that w.h.p.\ during the
 game he can follow this strategy.
 
 {\bf Strategy S$_{PM}$:} Maker's strategy is divided into three stages. Denote
 by $B$ (or $M$), Breaker's (or Maker's) graph at any point during
 the game. We also write $G=M\cup B$.
 
 {\bf Stage I:} In this stage Maker's goal is to claim a matching $P$ of size $n-n^{\alpha}$ where $0<\alpha<\frac 13$ is a constant to be chosen later. Denote by $R$ the set of vertices not touched by any edge of $M$ and denote $E[R]=\{e\in E(K_{n,n})~|~|e\cap R|=2\}$. In every turn, Maker chooses two vertices $x,y\in R$ and claims the edge $\{x,y\}$. Maker repeats this procedure $n-n^{\alpha}$ times and then moves to the next stage. Note that at any point in this stage $\Delta(M)\leq 1$. If at some point during this stage Maker cannot follow this strategy, then he forfeits the game. This stage takes $n-n^{\alpha}$ rounds. 
 
 {\bf Stage II:} In this stage, Maker ``fixes" his graph by replacing some of the edges in the subgraph $P$. Denote again by $R$ the set of vertices such that $R\cap V(M_1)=\emptyset$ where $M_1$ is Maker's graph from Stage I and let $P'\leftarrow P$. Let $T=\{v\in R~|~e_F(v,R)<\frac {\varepsilon}{4}n^{\alpha}\}$. For every $v\in T$,  Maker chooses an edge $e=\{u,w\}\in P$ such that the edge $\{v,u\}$ is available, $e_F(w,R)\geq\frac {\varepsilon}{8}n^{\alpha}$, and then claims the edge $e'=\{v,u\}$ and updates $P\leftarrow P\setminus \{e\}$, $P'\leftarrow(P'\cup \{e'\})\setminus \{e\}$ and $R\leftarrow(R\setminus \{v\})\cup \{w\}$. Maker repeats this procedure $|T|$ times.  If at some point during this stage Maker cannot do so, then he forfeits the game. Denote Maker's new graph by $M_2$. Observe that by the end of this stage, $P'$ is a  matching of size $n-n^{\alpha}$ and that  $|R|=2n^{\alpha}$. Furthermore,  $\Delta(M_2)\leq 3$. We will show later that this stage takes at most $\Theta(n^{\alpha})$ rounds and that every vertex $v\in R$ satisfies $e_F(v,R)\geq\frac {\varepsilon}{5}n^{\alpha}$ by the end of this stage.
 
 {\bf Stage III:} Let $R$ the set of vertices from Stage II. Note that $|R|=2n^{\alpha}$. In this stage, Maker plays only on the graph $G'=K_{n,n}[R]=(U_1\cup U_2,E')$, note that $|U_1|=|U_2|=n^{\alpha}$.  At the beginning of this stage, Maker finds a set of available edges $P_1$ such that $P_1$ is a perfect matching in $G'$. In each turn, Maker claims an unclaimed edge $e=\{u,v\}$ from $P_1$. Maker repeats this procedure $n^{\alpha}$  times. If at some point during this stage Maker cannot do so, then he forfeits the game. Observe that during this stage, the degree of the vertices that are not in $ V(G')=R$ stays the same as in Stage II, and the degree of the vertices in $G'$ increases by one. Therefore, by the end of this stage $\Delta(M)=O(1)$.
 
It is evident that $P'\cup P_1$ is a perfect matching and therefore following the suggested strategy, w.h.p.\ Maker wins the game in $n-n^\alpha+\Theta(n^{\alpha})+n^{\alpha}=n(1+o(1))$ rounds. It remains to prove that  w.h.p.\ Maker can follow this strategy without forfeiting the game. 
To this end, we first state two lemmas concerning Breaker's graph by the end of the game. The proofs of the lemmas are essentially identical to the proofs of Lemma \ref{lem:NumOfFreeEdgInHam}  and Lemma \ref{lem:MinDegOfFInHam}, respectively.

 \begin{lemma}\label{lem:NumOfFreeEdg}
 	Consider the $(1:b)$ Maker-Breaker game on $E(K_{n,n})$ (the bipartite complete graph with sides $V_1,V_2$), where Breaker plays
 	randomly and $b=\Theta (n)$. If at some point of the game
 	$|E(B)|\leq (1-\frac {5}6\varepsilon)n^2$ and $\Delta(M)=O(1)$, then w.h.p.\ for every constants $0<\beta,\gamma<1$ and for every $X_1\subseteq V_1$, $X_2\subseteq V_2$  such that $|X_1|= n^{\beta}$, $|X_2|= n^{\gamma}$, the number of available elements in $E(X_1,X_2)$ is at least $\frac {\varepsilon}3 |X_1|\cdot |X_2|$. 
 \end{lemma}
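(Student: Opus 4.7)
\textbf{Proof plan for Lemma \ref{lem:NumOfFreeEdg}.} The plan is to mirror the proof of Lemma \ref{lem:NumOfFreeEdgInHam}, with only minor modifications to adapt to the bipartite setting. I fix the constants $\beta,\gamma\in(0,1)$ and arbitrary subsets $X_1\subseteq V_1$, $X_2\subseteq V_2$ with $|X_1|=n^\beta$, $|X_2|=n^\gamma$. The number of available edges in $E(X_1,X_2)$ equals $|X_1||X_2|-e_M(X_1,X_2)-e_B(X_1,X_2)$. Since $\Delta(M)=O(1)$, we have $e_M(X_1,X_2)=O(\min(|X_1|,|X_2|))=o(|X_1||X_2|)$, so it suffices to show that w.h.p.\ $e_B(X_1,X_2)\le (1-\tfrac{\varepsilon}{3})|X_1||X_2|$ for every such pair $X_1,X_2$.

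The key observation is that the edges of Breaker have been chosen uniformly at random (across many rounds) from the unclaimed edges, and at this point of the game at most $(1-\tfrac{5}{6}\varepsilon)n^2$ edges are Breaker's while only $O(n)$ are Maker's. Thus for fixed $X_1,X_2$, the random variable $e_B(X_1,X_2)$ is asymptotically stochastically dominated by $Z\sim HG(N,K,|X_1||X_2|)$ with $N=n^2-O(n)$ and $K=(1-\tfrac{5}{6}\varepsilon)n^2$, so $\mu=\mathbb{E}[Z]=(1-\tfrac{5}{6}\varepsilon+o(1))|X_1||X_2|$. Writing $(1-\tfrac{\varepsilon}{3})|X_1||X_2|\ge(1+\tfrac{\varepsilon}{4})\mu$ and applying the upper-tail Chernoff-type bound from Lemma \ref{CheHyper}, I obtain
\begin{equation*}
\Pr\!\left[e_B(X_1,X_2)>(1-\tfrac{\varepsilon}{3})|X_1||X_2|\right]\le(1+o(1))\exp\!\left(-\tfrac{\varepsilon^2}{48}(1-2\varepsilon)n^{\beta+\gamma}\right).
\end{equation*}

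Finally, I take a union bound over all choices of $X_1\subseteq V_1$ and $X_2\subseteq V_2$ of the prescribed sizes. Using $\binom{n}{n^\beta}\binom{n}{n^\gamma}\le(en^{1-\beta})^{n^\beta}(en^{1-\gamma})^{n^\gamma}=\exp\!\left(O(n^\beta\log n+n^\gamma\log n)\right)$, the entropy factor is negligible compared to $n^{\beta+\gamma}$ whenever $\beta,\gamma>0$, so the total failure probability is $o(1)$. This yields the claimed bound $e_F(X_1,X_2)\ge\tfrac{\varepsilon}{3}|X_1||X_2|$ simultaneously for all such pairs with high probability.

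I do not anticipate any serious obstacle here: the argument is genuinely identical to that of Lemma \ref{lem:NumOfFreeEdgInHam}, since the board size changes only from $\binom{n}{2}$ to $n^2$ (an irrelevant constant factor in the hypergeometric parameters), and the assumption $\Delta(M)=O(1)$ plays exactly the role that the hypothesis $e_M(X,Y)=o(|X||Y|)$ played in the original lemma. The only minor subtlety is the stochastic domination: since Breaker's random choices are correlated across rounds (an edge claimed earlier cannot be claimed again), one should formally argue that at the end of the game Breaker's set of edges is uniformly distributed among all subsets of $E(K_{n,n})\setminus E(M)$ of the appropriate size, which then dominates the hypergeometric variable up to the $1+o(1)$ factor coming from the $O(n)$ Maker edges removed from the denominator.
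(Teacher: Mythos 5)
Your proposal is correct and follows exactly the route the paper intends: the paper gives no separate proof of Lemma \ref{lem:NumOfFreeEdg}, stating only that it is essentially identical to the proof of Lemma \ref{lem:NumOfFreeEdgInHam}, and your argument is precisely that proof (hypergeometric domination of $e_B(X_1,X_2)$, the Chernoff-type bound of Lemma \ref{CheHyper}, and a union bound over the $\binom{n}{n^\beta}\binom{n}{n^\gamma}$ choices of sets). Your observation that $\Delta(M)=O(1)$ yields $e_M(X_1,X_2)=o(|X_1||X_2|)$, playing the role of the corresponding hypothesis in Lemma \ref{lem:NumOfFreeEdgInHam}, is exactly the adaptation needed for the bipartite setting.
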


 \begin{corollary}
 	\label{lem:enoughEdgesInR}
 	Consider the $(1:b)$ Maker-Breaker game on $E(K_{n,n})$ where Breaker plays
 	randomly and $b=(1-\varepsilon) n$. If 
 	$\Delta(M)=O(1)$, then after $(1+o(1))n$ rounds w.h.p.\ between every two sets of vertices $X_i\subset V_i$ ($i\in \{1,2\}$) such
 	that $|X_i|\geq n^{\alpha}$, there exists an unclaimed edge.
 \end{corollary}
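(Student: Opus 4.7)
The plan is to derive the corollary directly from Lemma \ref{lem:NumOfFreeEdg} by verifying that its hypotheses are met at the relevant point in the game, and then noting that the lower bound on the number of available edges guaranteed by the lemma is much larger than $1$.

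First, I would check Breaker's edge count. In $(1+o(1))n$ rounds Breaker claims at most $(1-\varepsilon)n\cdot(1+o(1))n = (1-\varepsilon+o(1))n^{2}$ edges. For $n$ sufficiently large this is at most $(1-\tfrac{5}{6}\varepsilon)n^{2}$, so the hypothesis $|E(B)|\le(1-\tfrac{5}{6}\varepsilon)n^{2}$ of Lemma \ref{lem:NumOfFreeEdg} is satisfied. The other two hypotheses, $b=\Theta(n)$ and $\Delta(M)=O(1)$, are given directly.

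Next, I would reduce to the case $|X_{1}|=|X_{2}|=n^{\alpha}$. Given arbitrary $X_{1}\subseteq V_{1}$, $X_{2}\subseteq V_{2}$ with $|X_{i}|\ge n^{\alpha}$, pick arbitrary subsets $X'_{i}\subseteq X_{i}$ with $|X'_{i}|=\lceil n^{\alpha}\rceil$; any unclaimed edge in $E(X'_{1},X'_{2})$ is also unclaimed in $E(X_{1},X_{2})$. Since there are at most $\binom{n}{\lceil n^{\alpha}\rceil}^{2}$ pairs of such subsets, the union bound used inside the proof of Lemma \ref{lem:NumOfFreeEdg} already covers this family. Applying Lemma \ref{lem:NumOfFreeEdg} with $\beta=\gamma=\alpha$ yields that w.h.p.\ for every such pair
\[
e_{F}(X'_{1},X'_{2})\;\ge\;\tfrac{\varepsilon}{3}\,|X'_{1}|\cdot|X'_{2}|\;=\;\tfrac{\varepsilon}{3}\,n^{2\alpha}\;\ge\;1
\]
for $n$ large enough, which gives the desired unclaimed edge in $E(X_{1},X_{2})$.

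There is no real obstacle here; the only minor point of care is to make sure that the quantitative bound on $|E(B)|$ needed by the lemma is indeed achieved after $(1+o(1))n$ rounds, which is immediate from the choice $b=(1-\varepsilon)n$ with a strictly positive slack $\frac{\varepsilon}{6}$. So the corollary is essentially a direct specialization of Lemma \ref{lem:NumOfFreeEdg} to the smallest interesting size $n^{\alpha}$, together with a monotonicity observation to pass from the ``$=n^{\alpha}$'' statement of the lemma to the ``$\ge n^{\alpha}$'' statement of the corollary.
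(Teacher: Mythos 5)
Your proposal is correct and is essentially the paper's intended derivation: the paper states this as an immediate consequence of Lemma \ref{lem:NumOfFreeEdg} (with the hypothesis $|E(B)|\le(1-\frac{5}{6}\varepsilon)n^2$ supplied by the $\varepsilon/6$ slack after $(1+o(1))n$ rounds, exactly as you check), and gives no separate proof. Your reduction from $|X_i|\ge n^{\alpha}$ to subsets of size exactly $n^{\alpha}$ and the observation $\frac{\varepsilon}{3}n^{2\alpha}\ge 1$ are the same routine specialization.
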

 
The next lemma shows that during the game, the degree in Breaker's graph is not too close to $n$.
 
 \begin{lemma}\label{lem:MinDegOfF}
 	Consider the $(1:b)$ Maker-Breaker game on $E(K_{n,n})$ where Breaker plays
 	randomly and $b=(1-\varepsilon) n$. If 
 	$\Delta(M)=O(1)$, then after $(1+o(1))n$ rounds, w.h.p.\ $\Delta(B)\leq (1-\frac {\varepsilon}2)n$.
 \end{lemma}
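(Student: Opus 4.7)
The plan is to mimic the argument for Lemma~\ref{lem:MinDegOfFInHam} verbatim, with the routine adjustments needed for the bipartite board $E(K_{n,n})$. Fix an arbitrary vertex $v \in V_1 \cup V_2$ and let $d_B(v)$ denote its degree in Breaker's graph after $(1+o(1))n$ rounds. The goal is to show that $\Pr\bigl[d_B(v) > (1-\tfrac{\varepsilon}{2})n\bigr]$ is exponentially small in $n$, and then to take a union bound over the $2n$ vertices of the board.

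First I would control the quantities that feed into a Chernoff-type bound. Since $\Delta(M)=O(1)$ throughout the game, Maker contributes only $O(1)$ edges at~$v$, so at least $n - O(1)$ of the $n$ potential edges incident to $v$ are ever available for Breaker. On the other hand, after $(1+o(1))n$ rounds Breaker has claimed at most $b \cdot (1+o(1))n \leq (1-\tfrac{5}{6}\varepsilon)n^2$ edges in total (assuming $n$ large enough given $\varepsilon$), while Maker has claimed only $(1+o(1))n$ edges altogether. Thus Breaker is drawing his edges from a pool of size $n^2 - o(n^2)$, and the number of Breaker edges at $v$ is, up to $1+o(1)$ factors introduced by the sequential nature of the game, stochastically dominated by $Z \sim HG\bigl(n^2 - o(n^2),\, (1-\tfrac{5}{6}\varepsilon)n^2,\, n\bigr)$, exactly as in the proof of Lemma~\ref{lem:MinDegOfFInHam}.

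Next I compute $\mu := \mathbb{E}[Z] = (1-\tfrac{5}{6}\varepsilon)(1-o(1))n$ and observe that
\[
(1-\tfrac{\varepsilon}{2})n \;\geq\; (1+\tfrac{\varepsilon}{4})\mu
\]
for all sufficiently large $n$. Applying the upper-tail bound in Lemma~\ref{CheHyper} (item~2 of Lemma~\ref{Che}) then yields
\[
\Pr\bigl[d_B(v) > (1-\tfrac{\varepsilon}{2})n\bigr] \;\leq\; (1+o(1))\Pr\bigl[Z > (1+\tfrac{\varepsilon}{4})\mu\bigr] \;\leq\; (1+o(1))\exp\!\Bigl(-\tfrac{\varepsilon^2}{48}\mu\Bigr) \;=\; e^{-\Omega(n)}.
\]
A union bound over the $2n$ vertices gives $\Pr[\Delta(B) > (1-\tfrac{\varepsilon}{2})n] \leq 2n \cdot e^{-\Omega(n)} = o(1)$, which is the required conclusion.

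The only mildly delicate point, and the place where the argument deviates from a pure i.i.d.\ calculation, is justifying the stochastic-domination step: Breaker's samples across different rounds are not a single hypergeometric draw, because the pool of available edges shrinks as both players claim edges, and Maker's $O(n)$ claims perturb the denominator. However, the perturbation is at most a $1+o(1)$ multiplicative factor on the sampling probabilities (since Maker removes only $o(n^2)$ edges from a pool of size $\Theta(n^2)$), and one can couple the process round by round with the hypergeometric $Z$ defined above; this is precisely the argument used in Lemma~\ref{lem:MinDegOfFInHam}, and it transfers unchanged to the bipartite setting. Once this coupling is in hand, the Chernoff and union-bound steps are routine.
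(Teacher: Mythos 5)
Your proposal is correct and follows essentially the same route as the paper, which proves Lemma~\ref{lem:MinDegOfF} simply by noting that the argument of Lemma~\ref{lem:MinDegOfFInHam} (hypergeometric domination of $d_B(v)$, the Chernoff-type bound of Lemma~\ref{CheHyper} with deviation $\varepsilon/4$, and a union bound over the vertices) carries over verbatim to the bipartite board. Your parameter adjustments (pool $n^2-o(n^2)$, at most $(1-\tfrac{5}{6}\varepsilon)n^2$ Breaker edges, $n$ draws per vertex, $2n$ vertices in the union bound) are exactly the routine changes this transfer requires.
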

 

 Now we are ready to prove that w.h.p.\ Maker can follow his strategy in every stage.
 
 {\bf Stage I:} We need to prove that at any point during this stage Maker can find unclaimed edge $\{x,y\}$ such that $\{x,y\}\cap V(M)=\emptyset$. Indeed, at any point during this stage we have $|R|\geq 2n^{\alpha}$, and by Corollary \ref{lem:enoughEdgesInR} we know that there is an unclaimed edge in $R$. Therefore, Maker can claim the desired edge.
 
 {\bf Stage II:} First we show that the number of vertices $v\in R$ such that $e_F(v,R)<\frac {\varepsilon}8 n^{\alpha}$ is small.
 
Using Lemma~\ref{lem:NumOfFreeEdg} for $R$, w.h.p.\ the graph $F$, by the end of the game, satisfies the property from Claim \ref{claim:TisSmall}, and thus, w.h.p.\ $|T|\leq 2n^{\alpha/2}$. We write $T_0=V_0\cap T$ and $T_1=V_1\cap T$ where $V_0$ and $V_1$ are the parts of $K_{n,n}$. Then $|T_1|,|T_0|\leq n^{\alpha/2}$.
 
  In order to replace the \emph{bad} vertices in $R$ (that is, the vertices of $T$), note that for every vertex $v\in T_i$ the number of candidates to replace $v$ is large. Indeed, from Lemma~\ref{lem:MinDegOfF}, w.h.p.\ the minimum degree in $F$ is at least $\frac {\varepsilon}4n$, therefore w.h.p.\ for every $v\in T_i$, $e_F(v,V_{1-i}\setminus (R\cup T))\geq \frac {\varepsilon}4n-2n^\alpha-2n^{\alpha/2}\geq \frac {\varepsilon}5n$. For every $v\in T_i$ denote  $W_v=\{w\in V_i\setminus (T\cup R)~|~\exists u,~\{w,u\}\in P,~\{v,u\}\in F\}$, so $|W_v|\geq \frac {\varepsilon}5n>n^{\alpha}$. Since  w.h.p.\ the graph $F$ satisfies the property from Claim~\ref{claim:TisSmall}, we have that w.h.p.\ there exists $w\in W_v$ such that $e(w,U_{1-i})\geq \frac {\varepsilon}4 n^{\alpha}$. 
  Therefore, for every $v\in T_0$, Maker can choose a vertex $u\in V_1\setminus (R\cup T)$ such that $\{v,u\}$ is available and  $e_F(w,R)\geq \frac {\varepsilon}4 n^{\alpha}$ where $w$ is the vertex such that  $\{u,w\}\in P$. Maker repeats this procedure for every vertex in $v\in T_0$ and then move to the vertices of $T_1$. Thus, we can replace the vertices of $T_0$ in such a way that for every $v\in U_0$, $e_F(v,U_1)\geq \frac {\varepsilon}4n^{\alpha}$. Finally, Maker does the same for the vertices in $T_1$ and ensure that for every $v\in U_1$, $e_F(v,U_0)\geq \frac {\varepsilon}4n^{\alpha}$. This time, during the procedure, if a vertex $v\in T_1$ was replaced by $w$, then the degree of his neighbors in $U_0$ could become smaller than $\frac {\varepsilon}4n^{\alpha}$. But $|T_1|\leq n^{\alpha/2}$ and therefore after replacing the vertices of $T_1$ we still have that for every $v\in U_0$, $e_F(v,U_1)\geq\frac {\varepsilon}4n^{\alpha}-n^{\alpha/2}>\frac{\varepsilon}5n^{\alpha}$. 
 Note that in every step Maker looks only at the neighbors of $v\in T$ that are not in the current $R$ and are not part of $T$. Therefore, by the end of this stage $\Delta(M_2)\leq 3$. This completes the proof for Stage II. 

{\bf Stage III:} First we show that after Stage II, the graph $F[R]$ contains a perfect matching $P_1$.

By the construction in Stage II and by Lemma~\ref{lem:NumOfFreeEdg}, we have that the graph $G'$ w.h.p.\ satisfies properties 1 and 2 of Claim~\ref{claim:PMinR} with $|A_i|=n^{\alpha}$. Therefore, w.h.p.\ $G'$ contains a perfect matching $P_1$.

Finally, we show that in the next $n^{\alpha}$ rounds, w.h.p.\ Breaker will not claim any edge from $G'$.
Recall that by the end of the game there are at least $ \frac 34\varepsilon n^2$ available edges in the graph. Thus,	the probability for Breaker to claim an edge from $G'$ in the next round is at most $\frac {4n^{2\alpha}}{3\varepsilon n^2}\cdot b=(1-\varepsilon)\frac {4n^{2\alpha +1}}{3\varepsilon n^2}$. Therefore, the probability for Breaker to claim an edge from $G'$ in the next $n^\alpha$ rounds is at most $n^{\alpha}\cdot (1-\varepsilon)\frac {4n^{2\alpha +1}}{3\varepsilon n^2}=(1-\varepsilon)\frac {4n^{3\alpha +1}}{3\varepsilon n^2}=o(1)$ for $0<\alpha < \frac 13$.  We thus choose $\alpha$ to be (for instance) $\frac 14$.

All in all, during stage III w.h.p.\ Breaker will not claim edges from $G'$. Furthermore, from Claim~\ref{claim:PMinR}, w.h.p.\ Maker can find a perfect matching $P_1$ in $G'$ and thus claim all the edges of $P_1$ in the next $n^{\alpha}$ rounds. 

This completes the proof of Theorem~\ref{thm:RB-PM}.
 
 \subsection{Random Breaker $k$-connectivity game}

 In order to prove Theorem~\ref{thm:RB-C^k} we use the Random-Breaker Hamiltonicity game (Theorem~{\ref{thm:RBHam}}) and the Random-Breaker perfect-matching game (Theorem~\ref{thm:RB-PM2}).  Since this game is also bias monotone, we can assume that $b= (1-\varepsilon)\frac nk$. 
 
 First we present a strategy for Maker and then we prove that w.h.p.\ Maker can follow this strategy.
 
 {\bf Strategy $S_{k}$:} Maker (arbitrarily) partitions the vertices of $K_n$ into $k$ disjoint sets $V_1,\dots, V_{k-1},U$ where each $V_i$ has size $\lfloor{\frac {n}{k-1}}\rfloor$ and $U=V(K_n)\setminus \left(\bigcup V_i\right)$ (that is, $|U|<k-1$). For every $1\leq i\leq k-1$ let $G_i=K_n[V_i]$ and for every $1\leq i<j\leq k-1$ let $B_{ij}$ be the bipartite graph with $V(B_{ij})=V_i\cup V_j$ and $E(B_{ij})=\{\{u,v\}~|~v\in V_i,~u\in V_j\}$. 
 
  {\bf Stage I:} Maker plays the perfect-matching game on every $B_{ij}$ according to $S_{PM}$ with $\alpha=\frac 14$.  Maker follows $S_{PM}$ on $E(B_{12})$ to create a perfect matching in $B_{12}$, then on $E(B_{13})$, etc. In total, Maker plays (sequentially) $\binom {k-1}2$ games on $\binom {k-1}2$ boards. If at some point Maker cannot follow $S_{PM}$ then he forfeits the game. Note that this stage takes $(1+o(1))\lfloor{\frac {n}{k-1}}\rfloor\binom {k-1}2$ rounds and by the end of this stage Maker's graph contains a perfect matching between any two sets $V_i,V_j$. Also, according to $S_{PM}$, $\Delta(M)=O(1)$.
  
   {\bf Stage II:} Maker plays the Hamiltonicity game on every $G_i$ according to $S_{Ham}$.  Maker follows $S_{Ham}$ on $G_1$ to create a Hamilton cycle in $G_1$, then on $G_2$, etc. In total, Maker plays $k-1$ games on $k-1$ boards. If at some point Maker cannot follow $S_{Ham}$ then he forfeits the game. Note that this stage takes $(1+o(1))\lfloor{\frac {n}{k-1}}\rfloor(k-1)$ rounds and by the end of this stage Maker has $k-1$ disjoint cycles each of length $\lfloor{\frac {n}{k-1}}\rfloor$. Also, according to $S_{Ham}$ and $S_{PM}$, $\Delta(M)=O(n^{1/4})$.
   
  {\bf Stage III:} For every $u\in U$ Maker claims $k$ distinct edges $\{u,w\}$ such that $w\in V(K_n)\setminus U$. This stage takes at most $k^2$ rounds.
  
  Observe that if Maker successfully follows the strategy, his graph (on $\bigcup V_i$) consists of $k-1$ cycles of size $\lfloor\frac n{k-1}\rfloor$ and between any two  cycles there is a perfect matching. This graph is easily seen to be $k$-connected (see e.g., \cite{CFKL}, Lemma 2.7). Adding $U$ to this graph and demanding that for every $u\in U$, $e(\{u\},\cup V_i)=k$, we conclude that Maker's graph is indeed $k$-connected. In total, Maker plays at most $\left\lfloor{\frac {n}{k-1}}\right\rfloor (k-1)+\left\lfloor{\frac {n}{k-1}}\right\rfloor\binom {k-1}2+k^2+o(n)=(1+o(1))\frac {nk}2$ turns.
  
  We show now that w.h.p.\ Maker can follow strategy $S_k$. 
  
  {\bf Stage I:}  First we show that the conditions of Lemma~\ref{lem:NumOfFreeEdg}  holds. Indeed, observe that by the end of the game,  Breaker has at most $(1-\frac 56\varepsilon)\binom n2$ edges in the graph. Therefore, it follows from Lemma~\ref{lem:NumOfFreeEdgInHam} that w.h.p.\ by the end of the game $|E_B(B_{ij})|=e_B(V_i,V_j)\leq (1-\frac {\varepsilon}3)\left(\left\lfloor{\frac {n}{k-1}}\right\rfloor\right)^2$ for every $1\leq i<j\leq k-1$. Thus, the statement in Lemma~\ref{lem:NumOfFreeEdg} also holds for every graph $B_{ij}$ where $\frac {\varepsilon}3 \leftarrow \frac 56\varepsilon$. Next we show, analogously to Lemma~\ref{lem:MinDegOfF}, that during the game, the degree in Breaker's graph in each $B_{ij}$ is not too large.

   \begin{lemma}\label{lem:DegOfBreakerInB_ij}
   	Consider the $(1:b)$ Maker-Breaker game on $E(K_{n})$ where Breaker plays
   	randomly and $b=(1-\varepsilon) \frac nk$. If after $(1+o(1))\frac {kn}2$ turns of the game
   	$\Delta(M)=O(1)$, then w.h.p.\ $\Delta(B\cap B_{ij})\leq (1-\frac {\varepsilon}2)\lfloor{\frac {n}{k-1}}\rfloor$.
   \end{lemma}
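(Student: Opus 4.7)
The plan is to mimic the proof of Lemma \ref{lem:MinDegOfFInHam} almost verbatim, the only change being that instead of counting Breaker's edges incident to a vertex $v$ across all of $V(K_n)$, we count those that land inside $E(B_{ij})$. First, I would observe that after $(1+o(1))\frac{kn}{2}$ rounds with Breaker's bias $b=(1-\varepsilon)\frac{n}{k}$, the total number of edges Breaker has claimed is $(1-\varepsilon)(1+o(1))\frac{n^2}{2}$, which is at most $(1-\frac{5}{6}\varepsilon)\binom{n}{2}$ for $n$ large enough in terms of $\varepsilon$. Meanwhile, since $\Delta(M)=O(1)$, Maker's total edge count is $O(n)$ and in particular he occupies at most $O(1)$ of the $\lfloor \frac{n}{k-1}\rfloor$ potential edges from a fixed vertex $v\in V_i$ into $V_j$.

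Next, I would fix $v\in V_i$ and let $d(v)$ denote the number of Breaker's edges from $v$ into $V_j$. Because Breaker's choices are uniformly random among all unclaimed edges in every round, $d(v)$ is asymptotically stochastically dominated by a hypergeometric variable
\[
Z \sim HG\!\left(\tbinom{n}{2}-O(n),\;(1-\tfrac{5}{6}\varepsilon)\tbinom{n}{2},\;\left\lfloor\tfrac{n}{k-1}\right\rfloor - O(1)\right),
\]
with expectation $\mu=(1-\frac{5}{6}\varepsilon)\lfloor \frac{n}{k-1}\rfloor(1+o(1))$. Choosing a constant $a=a(\varepsilon,k)>0$ small enough so that $(1+a)\mu \le (1-\frac{\varepsilon}{2})\lfloor \frac{n}{k-1}\rfloor$ (any $a$ slightly below $\frac{\varepsilon/3}{1-5\varepsilon/6}$ works for $n$ large), Lemma \ref{CheHyper} yields
\[
\Pr\!\left[d(v) > (1-\tfrac{\varepsilon}{2})\left\lfloor\tfrac{n}{k-1}\right\rfloor\right] \le (1+o(1))\exp\!\left(-\tfrac{a^2}{3}\mu\right) = \exp\!\left(-\Omega_{\varepsilon,k}(n)\right).
\]

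Finally, I would take a union bound over the $n$ vertices (and the $O(k^2)=O(1)$ choices of the unordered pair $\{i,j\}$), which still gives $o(1)$ because each individual failure probability decays exponentially in $n$. I do not anticipate any real obstacle here: the argument is a direct adaptation of Lemma \ref{lem:MinDegOfFInHam}, and the only thing one must check carefully is that the linear deficit $\frac{\varepsilon}{2}\lfloor \frac{n}{k-1}\rfloor - (1-\frac{5}{6}\varepsilon)\lfloor \frac{n}{k-1}\rfloor(1+o(1))$ between the desired threshold and the hypergeometric mean is a positive constant fraction of $\mu$, which it is for every fixed $k$ and $\varepsilon>0$.
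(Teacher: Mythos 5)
Your proposal is correct and follows essentially the same route as the paper: bound Breaker's degree from a vertex $v$ into the opposite side of $B_{ij}$ by a hypergeometric variable with population $\binom n2-O(n)$, success count a $(1-\Theta(\varepsilon))$-fraction of $\binom n2$, and $\lfloor\frac n{k-1}\rfloor$ draws, apply Lemma~\ref{CheHyper} to get an $e^{-\Omega_{\varepsilon,k}(n)}$ tail, and finish with a union bound over the $O(n)$ vertices and the $\binom{k-1}2=O(1)$ bipartite graphs $B_{ij}$. The only cosmetic differences (using $(1-\frac56\varepsilon)\binom n2$ in place of the paper's $(1-\varepsilon)\binom n2$, and subtracting Maker's $O(1)$ edges from the number of draws) do not change the argument.
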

   
   \begin{proof}
   	Let $v\in V$ and denote by $d_B(v)$ the degree of $v$ in $B[B_{ij}]$ by the end of the game. Then for $Z\sim HG\left(\binom n2-O(n),(1-\varepsilon)\binom n2,\left\lfloor{\frac {n}{k-1}}\right\rfloor\right)$ and $\mu=\mathbb{E}(Z)$, using Lemma \ref{CheHyper} we have,
   	\begin{align*}
   	\Pr\left[d_B(v)> (1-\frac {\varepsilon}2)\left\lfloor{\frac {n}{k-1}}\right\rfloor\right]&\leq (1+o(1))\cdot\Pr\left[Z> (1-\frac {\varepsilon}2)\left\lfloor{\frac {n}{k-1}}\right\rfloor\right]\\
   	&\leq (1+o(1))\cdot\Pr\left[Z> (1+\frac {\varepsilon}4)\mu\right]\\
   	&\leq (1+o(1))e^{-\frac {\varepsilon^2}{48}\mu}\leq (1+o(1))e^{-\frac {\varepsilon^2}{48k}(1-2\varepsilon)n}.
   	\end{align*}
   	Therefore, by the union bound, 
   	\begin{align*}
   	\Pr\left[\exists v\in B_{ij},~s.t.~d_B(v)> (1-\frac {\varepsilon}2)\left\lfloor{\frac {n}{k-1}}\right\rfloor\right]\leq 2\left\lfloor{\frac {n}{k-1}}\right\rfloor\cdot (1+o(1))e^{-\frac {\varepsilon^2}{48k}(1-2\varepsilon)n}.
   	\end{align*}
   	Finally, 
   	\begin{align*}
   &\Pr\left[\exists B_{ij},~\exists v\in B_{ij},~s.t.~d_B(v)> (1-\frac {\varepsilon}2)\left\lfloor{\frac {n}{k-1}}\right\rfloor\right]\\
   \leq& \binom {k-1}2\cdot2\left\lfloor{\frac {n}{k-1}}\right\rfloor\cdot (1+o(1))e^{-\frac {\varepsilon^2}{48k}(1-2\varepsilon)n}=o(1).
   	\end{align*}
    \end{proof}

   All in all, according to the proof of Theorem~\ref{thm:RB-PM2} (and using the union bound on $\binom {k-1}2$ events), Maker can build w.h.p.\ a perfect matching in each $B_{ij}$ in $(1+o(1))\lfloor{\frac {n}{k-1}}\rfloor$ rounds. Therefore, to build $\binom {k-1}2$ perfect matchings Maker needs $(1+o(1))\lfloor{\frac {n}{k-1}}\rfloor\binom {k-1}2$ rounds.
   
    {\bf Stage II:}  In order to follow the strategy in this stage, we need to show that similarly to the proof of Theorem \ref{thm:RBHam}, also here there are many  available edges between any two subsets of vertices of polynomial size. For this we use  Lemma \ref{lem:NumOfFreeEdgInHam}. Note that it follows from the strategy and the duration of the game that yet again w.h.p.\ $|E(M)|=o(n^2)$ and  $|E(B)|\leq (1-\frac 56 \varepsilon)\binom n2$ and thus the statement of Lemma \ref{lem:NumOfFreeEdgInHam} holds here as well.
    	
%
%
%
%


   Next we show, similarly to Lemma \ref{lem:MinDegOfFInHam}, that during the game, the degree in Breaker's graph in each $G_i$ is not too large.

    \begin{lemma}\label{lem:DegOfBreakerInG_i}
    	Consider the $(1:b)$ Maker-Breaker game on $E(K_{n})$ where Breaker plays
    	randomly and $b=(1-\varepsilon) \frac nk$. If 
    	$\Delta(M)=O(n^{\delta})$ (where $0<\delta<1$ is a constant), then w.h.p.\ $\Delta(B\cap G_{i})\leq (1-\frac {\varepsilon}2)\lfloor{\frac {n}{k-1}}\rfloor$.
    \end{lemma}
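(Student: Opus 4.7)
The plan is to mirror the approach of Lemma \ref{lem:MinDegOfFInHam} (for $\Delta(B)$ on all of $K_n$) and Lemma \ref{lem:DegOfBreakerInB_ij} (for $\Delta(B)$ restricted to $B_{ij}$), adapting only the parameters that change when we localize to the induced subgraph $G_i = K_n[V_i]$, which has $\lfloor n/(k-1)\rfloor$ vertices.

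First I would fix a subgraph $G_i$ and a vertex $v \in V_i$, and let $d_B(v)$ denote the number of Breaker's edges within $G_i$ incident to $v$. The game lasts $(1+o(1))\frac{kn}{2}$ rounds and Breaker claims $(1-\varepsilon)\frac{n}{k}$ edges per round, so the total number of edges Breaker claims is at most $(1-\tfrac{5}{6}\varepsilon)\binom{n}{2}$. Since $\Delta(M)=O(n^\delta)$ for some constant $0<\delta<1$, Maker claims at most $O(n^{1+\delta}) = o(n^2)$ edges overall. Thus, conditioning on the total number of Breaker edges, the random variable $d_B(v)$ is (up to a $1+o(1)$ factor) stochastically dominated by $Z \sim HG\bigl(\binom{n}{2}-O(n^{1+\delta}),\ (1-\tfrac{5}{6}\varepsilon)\binom{n}{2},\ \lfloor n/(k-1)\rfloor - 1\bigr)$, with expectation $\mu \leq (1-\tfrac{5}{6}\varepsilon + o(1))\lfloor n/(k-1)\rfloor$.

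Next I would apply the upper-tail Chernoff bound for the hypergeometric distribution (Lemma \ref{CheHyper}, item 2) with $a=\varepsilon/4$, which ensures $(1+a)\mu \leq (1-\tfrac{\varepsilon}{2})\lfloor n/(k-1)\rfloor$ for $n$ large enough. This yields
\[
\Pr\left[d_B(v) > \left(1-\tfrac{\varepsilon}{2}\right)\left\lfloor\tfrac{n}{k-1}\right\rfloor\right] \leq (1+o(1))\exp\!\left(-\tfrac{\varepsilon^2}{48k}(1-2\varepsilon)n\right).
\]
Taking a union bound over the at most $\lfloor n/(k-1)\rfloor$ vertices in $V_i$ and the $k-1$ choices of $G_i$, the total failure probability is bounded by $(k-1)\lfloor n/(k-1)\rfloor \cdot (1+o(1))\exp(-\Theta_{\varepsilon,k}(n)) = o(1)$, exactly as in Lemma \ref{lem:DegOfBreakerInB_ij}.

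There is no genuine obstacle: the only difference from Lemma \ref{lem:MinDegOfFInHam} is that the number of potential neighbors of $v$ is $\lfloor n/(k-1)\rfloor - 1$ rather than $n-1$, so the hypergeometric has a different third parameter but the same exponential decay in the relevant range; and the only difference from Lemma \ref{lem:DegOfBreakerInB_ij} is that we are now working with the induced subgraph rather than a bipartite subgraph, which affects constants only. The slightly larger bound $\Delta(M)=O(n^\delta)$ (vs.\ $O(1)$) is harmless since $n^{1+\delta}=o(n^2)$ and so the total number of edges claimed by Maker is still negligible compared to $\binom{n}{2}$.
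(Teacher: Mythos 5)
Your proposal is correct and follows essentially the same route as the paper: compare $d_B(v)$ (for $v$ in a fixed $G_i$) with a hypergeometric variable $Z\sim HG\bigl(\binom n2-O(n^{1+\delta}),\,(1-\Theta(\varepsilon))\binom n2,\,\lfloor \frac{n}{k-1}\rfloor-1\bigr)$, apply the hypergeometric Chernoff bound (Lemma \ref{CheHyper}) with deviation $\varepsilon/4$ to get a failure probability of $(1+o(1))e^{-\frac{\varepsilon^2}{48k}(1-2\varepsilon)n}$, and finish with a union bound over the $\lfloor \frac{n}{k-1}\rfloor$ vertices of each $G_i$ and the $k-1$ subgraphs. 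The minor bookkeeping differences (using $(1-\frac{5}{6}\varepsilon)\binom n2$ rather than $(1-\varepsilon)\binom n2$ for Breaker's edge count) do not change the argument.
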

    
    \begin{proof}
    	Let $v\in V$ and denote by $d_B(v)$ the degree of $v$ in $B[G_i]$ by the end of the game. Then for $Z\sim HG\left(\binom n2-n^{\delta+1},(1-\varepsilon)\binom n2,\lfloor{\frac {n}{k-1}}-1\rfloor\right)$ and $\mu=\mathbb{E}(Z)$, using Lemma \ref{CheHyper} we have,
    	\begin{align*}
    	\Pr\left[d_B(v)> (1-\frac {\varepsilon}2)\left\lfloor{\frac {n}{k-1}}\right\rfloor\right]&\leq (1+o(1))\cdot\Pr\left[Z> (1-\frac {\varepsilon}2)\left\lfloor{\frac {n}{k-1}}\right\rfloor\right]\\
    	&\leq (1+o(1))\cdot\Pr\left[Z> (1+\frac {\varepsilon}4)\mu\right]\\
    	&\leq (1+o(1))e^{-\frac {\varepsilon^2}{48}\mu}\leq (1+o(1))e^{-\frac {\varepsilon^2}{48k}(1-2\varepsilon)n}.
    	\end{align*}
    	Therefore, by union bound, 
    	\begin{align*}
    	\Pr\left[\exists v\in G_{i},~s.t.~d_B(v)> (1-\frac {\varepsilon}2)\left\lfloor{\frac {n}{k-1}}\right\rfloor\right]\leq \left\lfloor{\frac {n}{k-1}}\right\rfloor\cdot (1+o(1))e^{-\frac {\varepsilon^2}{48k}(1-2\varepsilon)n}.
    	\end{align*}
    	Finally, 
    	\[\Pr\left[\exists G_i,~\exists v\in G_i,~s.t.~d_B(v)> (1-\frac {\varepsilon}2)\left\lfloor{\frac {n}{k-1}}\right\rfloor\right]\leq (k-1)\cdot\left\lfloor{\frac {n}{k-1}}\right\rfloor\cdot (1+o(1))e^{-\frac {\varepsilon^2}{48k}(1-2\varepsilon)n}=o(1).\]
    \end{proof}
   
   All in all, according to the proof of Theorem~\ref{thm:RBHam} (and using the union bound on $k-1$ events), w.h.p.\ Maker can build a Hamilton cycle in each $G_i$ in $(1+o(1))\lfloor{\frac {n}{k-1}}\rfloor$ rounds. Therefore, to build ${k-1}$ cycles, Maker needs $(1+o(1))\lfloor{\frac {n}{k-1}}\rfloor (k-1)$ rounds.
   
   {\bf Stage III:} From Lemma~\ref{lem:MinDegOfF} we know that w.h.p.\ for every $u\in U$ we have $d_B(u)\leq (1-\frac {\varepsilon}2)n$. But $|U|<k$ and therefore w.h.p.\ by the end of the game $e_F(\{u\},V\setminus U)>k$ and Maker can  claim w.h.p.\ $k$ neighbors of $u$ in $k$ rounds. 
  
  Taking it all together, w.h.p.\ Maker can follow the suggested strategy and build the desired graph. This completes the proof of Theorem \ref{thm:RB-C^k}.

\section{Random Maker games}
In this section we consider the random-player setting where Maker plays randomly and claims in every round $m$ elements among all available elements, uniformly at random. Here we prove Theorems \ref{thm:RMIsoVertex}, \ref{thm:RMHam} and \ref{thm:RMCon}.

\subsection{Random Maker positive minimum degree game -- Breaker's side}

In this section we prove Theorem \ref{thm:RMIsoVertex}.   We show that if $m=O(\ln\ln n)$ then w.h.p.\ Breaker can isolate a vertex in Maker's graph and thus typically win
every game whose winning sets consist of spanning subgraphs with a positive minimum degree. 

\begin{proof} Let $\varepsilon>0$ be small enough constant and let $m\leq(\frac 12-\varepsilon)\ln\ln n$. Since claiming more edges can only help Maker, we can assume $m=(\frac 12-\varepsilon)\ln\ln n$.  We show that Breaker has a strategy such that following it w.h.p.\ he can claim all $n-1$ edges incident to one vertex within
	$\frac {cn\ln n}m$ rounds of the game ($c$ is a constant to be
	chosen later). Observe that after $\frac {cn\ln n}m$ rounds, there
	are only $\frac {cn\ln n}m(m+1)=(1+o(1))cn\ln n$ claimed elements on the
	board of the game. Therefore, at any point during the game, there
	are at least $\binom n2-\frac {cn\ln n}m(m+1)=(1-o(1))\frac {n^2}2$
	available elements on the board. We say that a vertex $v$ is \emph{free} if $E_M(\{v\},N(v)) =\emptyset$.  Breaker chooses a free
	vertex $v$ and tries to claim all the edges incident  to $v$. If Maker claimed some edge $\{v,w\}$ before Breaker was able to claim all edges incident to $v$, then in the next round Breaker chooses another free vertex and repeats the procedure.  If at some point during the first $\frac {cn\ln n}{m}$ rounds there is no free vertex in the graph, then Breaker forfeits the game.  Every time Breaker moves to a new free vertex, we say that Breaker started a new \emph{attempt}. Now we need to show that w.h.p.\ if Breaker follows his strategy then he will succeed in one of the attempts. 
	
	First we show that at any point during the first $\frac {cn\ln n}m$
	rounds of the game, there exists a free vertex in the graph.
	
	\begin{claim}\label{claim:freeVertex}
		There exists a constant $c>0$ such that after $\frac {cn\ln n}m$ rounds of the game, w.h.p.\ there exists a vertex
		$v$ which is isolated in Maker's graph.
	\end{claim}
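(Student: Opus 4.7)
The plan is the second moment method. Let $Y$ denote the number of free vertices at the end of round $T := \lfloor cn\ln n/m\rfloor$ for a fixed constant $c \in (0, 1/2)$ (say $c = 1/4$); it will suffice to show $\mathbb{E}[Y] \to \infty$ and $\mathrm{Var}(Y) = o(\mathbb{E}[Y]^2)$, whence Chebyshev's inequality gives $\Pr[Y \ge 1] \to 1$. Since $T(m+1) = (1+o(1))cn\ln n = o(n^2)$, the available pool satisfies $|A_t| \ge (1-o(1))\binom{n}{2}$ uniformly in $t \le T$.

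For the first moment, fix a vertex $v$. In round $t$, conditionally on any history, Maker's $m$ edges form a uniform $m$-subset of $A_t$ and at most $n-1$ of the edges of $A_t$ are incident to $v$, so a hypergeometric bound gives a conditional miss-probability at least $(1 - (2+o(1))/n)^m$, deterministically. Telescoping through the $T$ rounds via the tower property,
\[
\Pr[v \text{ free}] \ge \bigl(1 - \tfrac{2+o(1)}{n}\bigr)^{mT} = n^{-2c - o(1)},
\]
so $\mathbb{E}[Y] \ge n^{1-2c-o(1)} \to \infty$.

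For the second moment, I compare the per-round joint miss-probability $q_t^{\{u,v\}} = \binom{|A_t|-a_t^u-a_t^v+\epsilon_t}{m}/\binom{|A_t|}{m}$ (with $\epsilon_t = \mathbf{1}[\{u,v\}\in A_t]$) with the product $q_t^u\, q_t^v$ of single-miss probabilities. A direct expansion of the ratio as the product of $m$ hypergeometric factors, each of the form $1 + O(1/|A_t|) = 1 + O(1/n^2)$ since $a_t^u, a_t^v \le n$ and $|A_t| = \Theta(n^2)$, gives deterministically that $q_t^{\{u,v\}}/(q_t^u\, q_t^v) \le 1 + O(m/n^2)$. Folding this pointwise bound through the tower property over the $T$ rounds yields the approximate-independence statement $\Pr[u,v \text{ both free}] \le (1 + o(1))\Pr[u \text{ free}]\Pr[v \text{ free}]$; summing over the pairs $u \ne v$ gives $\mathrm{Var}(Y) \le \mathbb{E}[Y] + o(1)\mathbb{E}[Y]^2$, and Chebyshev's inequality concludes.

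The main obstacle is transporting the per-round pointwise approximate-independence bound into a global one in the face of the adaptive Breaker: the events $\{u \text{ free}\}$ and $\{v \text{ free}\}$ are dependent both because Maker samples without replacement within each round and because Breaker's history-dependent choices feed back into $A_t$. The saving grace is that $|A_t| = (1-o(1))\binom{n}{2}$ throughout, so each source of dependence contributes only $O(1/n^2)$ multiplicative error per round, and the accumulated error stays $o(1)$ over $T = \Theta(n\ln n/m)$ rounds.
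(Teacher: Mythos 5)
Your first-moment computation is fine: the per-round conditional miss-probability bound holds uniformly over histories, so telescoping it through the tower property is legitimate no matter how Breaker plays, and $\mathbb{E}[Y]\ge n^{1-2c-o(1)}\to\infty$ for $c<1/2$. The gap is in the second moment. The inequality $q_t^{\{u,v\}}\le\bigl(1+O(m/n^2)\bigr)q_t^u q_t^v$ compares three conditional probabilities \emph{given the same history}; it does not ``fold'' into $\Pr[u,v\text{ both free}]\le(1+o(1))\Pr[u\text{ free}]\Pr[v\text{ free}]$, because after taking expectations you must compare $\mathbb{E}\bigl[\prod_t Q_t^uQ_t^v\bigr]$ with $\mathbb{E}\bigl[\prod_t Q_t^u\bigr]\mathbb{E}\bigl[\prod_t Q_t^v\bigr]$, and the conditional probabilities $Q_t^u,Q_t^v$ are driven by Breaker's adaptive choices, which are not an $O(1/n^2)$-per-round perturbation. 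Concretely, Breaker has $T=\Theta(n\ln n/\ln\ln n)\gg n$ moves, so he can inspect Maker's first-round edges and, on some balanced event in that randomness, spend the next $2n$ rounds claiming every edge incident to both $u$ and $v$; this raises each survival probability from $n^{-2c}$ to roughly $e^{-2m}=(\ln n)^{-(1-2\varepsilon)}$, and makes $\Pr[u,v\text{ both free}]$ exceed $\Pr[u\text{ free}]\Pr[v\text{ free}]$ by a constant factor (and by much more if the triggering event is given smaller probability). So the approximate-independence statement you rely on is false against an adaptive Breaker: your ``saving grace'' controls only the within-round, sampling-without-replacement dependence, not the cross-round dependence Breaker injects through the pool $A_t$ and through $a_t^u,a_t^v$. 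This cannot be dismissed as pathological, since in the proof of Theorem \ref{thm:RMIsoVertex} Breaker does exactly this: he adaptively selects free vertices and claims all edges at them, which is precisely the kind of targeted protection that correlates the events $\{u\text{ free}\}$ and $\{v\text{ free}\}$.

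The paper avoids this issue altogether: instead of a variance bound on the number of free vertices, it lower-bounds the waiting time $\tau$ until Maker touches all $n$ vertices by a coupon-collector argument, using only the history-uniform fact that the probability that a single random Maker edge touches one of the $n-2i$ currently untouched vertices is $O\bigl((n-2i)/n\bigr)$ in every position; this yields stochastic domination of $\tau$ from below by a sum of independent geometric variables with mean $\Theta(n\ln n)$ and variance $\Theta(n^2)$, and Chebyshev applied to that sum gives the claim. If you want to rescue a second-moment argument you would need to fix the particular Breaker strategy, separate out the (few) vertices Breaker ever targets, and prove near-independence only for untargeted pairs -- considerably more work than the waiting-time route, and none of it is present in your write-up as it stands.
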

	
	\begin{proof}
		In the first $\frac {cn\ln n}m$ rounds, Maker claims $cn\ln n$ edges.
		The proof is similar to the Coupon Collector problem with minor
		changes. 
%
%
Assume
that every time that Maker touches a free vertex, he actually
touches two free vertices (otherwise it is only in favor of Breaker). 
		Let
		$\tau_i$ be the number of turns until Maker
		touches $2i$ different vertices and let $\tau$ be the number of
		turns until Maker touches all the vertices. Then
		$\tau=\tau_1+(\tau_2-\tau_1)+(\tau_3-\tau_2)+\dots+(\tau_{\frac
			n2}-\tau_{\frac n2-1})$. Also, for every $i$, $\tau_{i+1}-\tau_i$ is stochastically dominated from above by $A_i\sim Geo(\frac{2}{1-o(1)}\cdot\frac {n-2i}{n})$ and from below by $B_i\sim Geo(\frac {(n-2i)n-cn\ln n}
		{n^2})$ (the number of
		unclaimed edges from the $n-2i$ free vertices  divided by the number of available edges in the graph).  Furthermore, since $B_i$ and $A_i$ are all
		independent random variables, we have that
		$\mathbb{E}(\tau)\geq Cn\ln n$ (for some constant $C>0$) and
		$Var(\tau)=\Theta(n^2)$. Then by Chebyshev we have that
		$$\Pr\left(|\tau-Cn\ln n|>dn\ln \ln n\right)\leq \Theta\left(\frac{n^2}{d^2n^2\ln^2 \ln n}\right)=o(1).
		$$ Therefore, for a constant $c$ satisfying $cn\ln n<Cn\ln n-dn\ln\ln
		n$ we have that $\Pr\left(\tau<cn\ln n\right)=o(1).$
		%
		%
		%
	\end{proof}
	
	\medskip
	
	Next, we show that the probability for Breaker to claim
	all edges from one vertex in one attempt is not too small.
	
	\begin{claim}
		The probability for Breaker to be able to claim in one attempt all edges incident to some {\bf
			free} vertex $v$ is at least $\frac {1}{\ln^{1-4\varepsilon} n}$.
	\end{claim}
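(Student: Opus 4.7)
Fix a \emph{free} vertex $v$ on which Breaker has started an attempt. Since no Maker-edge touches $v$, Breaker needs at most $n-1$ rounds to claim every $v$-edge (fewer if he already owns some $v$-edges from earlier attempts, which only helps). Writing $E_i$ for the event that Maker's $m$ random picks in round $i$ of the attempt miss every $v$-edge, the attempt succeeds iff $E_1\cap\cdots\cap E_{n-1}$ occurs, so
\[
\Pr[\text{attempt on }v\text{ succeeds}]\;=\;\prod_{i=1}^{n-1}\Pr[E_i\mid E_1,\dots,E_{i-1}].
\]
My plan is to bound each factor via a hypergeometric computation and then telescope.

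Conditioning on $E_1\cap\dots\cap E_{i-1}$, exactly $k_i=n-i$ edges at $v$ remain available (Breaker has claimed $i-1$ and Maker has claimed $0$ so far in this attempt). Let $N_i$ be the total number of available edges at the moment Maker plays in round $i$. Because Breaker's strategy is capped at $\tfrac{cn\ln n}{m}$ rounds overall, no more than $O(n\ln n)$ edges have been claimed at any point in the game, so $N_i\geq(1-o(1))\binom{n}{2}$ uniformly. Writing the round-$i$ success probability as a hypergeometric ratio,
\[
\Pr[E_i\mid E_1,\dots,E_{i-1}] \;=\; \frac{\binom{N_i-k_i}{m}}{\binom{N_i}{m}} \;=\; \prod_{j=0}^{m-1}\!\left(1-\frac{k_i}{N_i-j}\right),
\]
and since $k_i/(N_i-j)=O(1/n)=o(1)$, Taylor expansion $\log(1-x)\geq -(1+o(1))x$ gives
\[
\log\Pr[E_i\mid E_1,\dots,E_{i-1}] \;\geq\; -(1+o(1))\,\frac{m\,k_i}{N_i}.
\]

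Now I would sum over $i$. Since the $k_i$ form an arithmetic progression under the success history, $\sum_{i=1}^{n-1}k_i = \binom{n}{2}$, and combined with $N_i\geq(1-o(1))\binom{n}{2}$ the bound collapses to
\[
\sum_{i=1}^{n-1}\log\Pr[E_i\mid\cdots]\;\geq\;-(1+o(1))\,m.
\]
Substituting $m=(\tfrac12-\varepsilon)\ln\ln n$ yields a lower bound $(\ln n)^{-(1/2-\varepsilon+o(1))}$ on the success probability of a single attempt, which is well above $\ln^{-(1-4\varepsilon)}n$ for all small enough $\varepsilon>0$ and large $n$.

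There is no serious obstacle here; the computation is essentially a two-line Chernoff/Taylor estimate once one sees the right quantities. The only point I would take care with is the uniform lower bound $N_i\geq(1-o(1))\binom{n}{2}$, which is what keeps $k_i/N_i=o(1)$ and makes the Taylor approximation tight -- this is free from the global bound on the number of rounds built into Breaker's strategy. That my estimate is stronger than what is stated is fortunate: the slack $(1/2-\varepsilon)$ versus $(1-4\varepsilon)$ will be useful in the subsequent step, where one shows that among the many (nearly independent) attempts Breaker performs, at least one succeeds w.h.p.
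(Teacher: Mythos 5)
Your proof is correct, and it sharpens the paper's argument rather than merely reproducing it. The paper bounds the probability that any single one of Maker's random picks touches $v$ by $\frac{n-1}{\binom{n}{2}-O(n\ln n)}=\frac{2}{n}(1+o(1))$, applies this uniformly to all $m(n-1)$ picks Maker makes during the attempt, and obtains a success probability of roughly $e^{-(2+o(1))m}=(\ln n)^{-(1-2\varepsilon)(1+o(1))}$. You instead track the number $k_i\le n-i$ of still-unclaimed edges at $v$ in round $i$ of the attempt, so that $\sum_i k_i\le\binom{n}{2}$ and the exponent telescopes to $(1+o(1))m$, yielding $(\ln n)^{-(\frac 12-\varepsilon)(1+o(1))}$. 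This factor-two saving in the exponent is not just cosmetic: for small $\varepsilon$ (say $\varepsilon<1/6$) your bound indeed exceeds the stated $\ln^{-(1-4\varepsilon)}n$, whereas the paper's cruder estimate only gives $(\ln n)^{-(1-2\varepsilon)+o(1)}$, which is in fact marginally below the bound asserted in the claim (its final display would need $(1-2\varepsilon)(1+o(1))\le 1-4\varepsilon$, which fails for large $n$); the discrepancy is harmless for Theorem \ref{thm:RMIsoVertex}, since all that is used afterwards is that the number of attempts $\frac{c\ln n}{m}$ times the per-attempt success probability tends to infinity, which either bound provides, but your bookkeeping is what actually delivers the stated constant. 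Two small points to phrase carefully: $k_i$ is \emph{at most} $n-i$ (Breaker may already own some $v$-edges from earlier attempts, and the order of moves within a round can only reduce it further), which only helps you since the per-round miss probability is decreasing in $k_i$; and the bound $N_i\ge(1-o(1))\binom{n}{2}$ holds deterministically because Breaker's strategy is capped at $\frac{cn\ln n}{m}$ rounds, so it can legitimately be invoked conditionally on any history, exactly as you use it.
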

	
	\begin{proof}
		For some vertex $v$, the probability for Maker to touch this vertex
		at some turn is at most $\frac {n-1}{\binom n2-\frac {cn\ln
				n}m(m+1)}=\frac 2n (1+o(1))$. Therefore, the probability for Breaker
		to claim all edges from some vertex is at least $(1-\frac 2n
		(1+o(1)))^{m(n-1)}$ (we want Maker to "fail`` at least $m(n-1)$ times). Note
		that,
		
		\begin{align*}
		\left(1-\frac 2n
		(1+o(1))\right)^{m(n-1)}=&\left(1-\frac 2n (1+o(1))\right)^{(\frac 12-\varepsilon)(n-1)\ln \ln
			n}
		\geq e^{-(1-4\varepsilon)\ln \ln
			n},
		\end{align*}
		this completes the proof.
	\end{proof}

	During the game, Breaker has at least $\frac
	{\frac {cn\ln n}m}{n}=\frac {c\ln n}{m}$ attempts. Let $X$ be the
	number of successful attempts, then,
	$$\Pr\left[X<1\right]\leq \Pr\left[\Bin\left(\frac {c\ln n}{m},\frac {1}{\ln^{1-4\varepsilon} n}\right)=0\right]=\left(1-\frac {1}{\ln^{1-4\varepsilon} n}\right)^{\frac {c\ln n}{m}}=o(1).$$

	Thus w.h.p.\ Breaker succeeds at least once and is able to claim all edges from some vertex $v$ in the first $\frac {cn\ln n}m$ rounds. This completes the proof.
\end{proof}

\subsection{Random Maker Hamiltonicity game}

In this section we prove Theorem~\ref{thm:RMHam}. One way to show that Maker's graph contains w.h.p.\ a Hamilton cycle, is to prove that Maker's graph is a good (connected) expander. In order to do so, we use the well-known Box game, firstly introduced by Chv\'atal and Erd\H{o}s in \cite{CE}. 

Let $n$ and $s$ be integers. In the $(a:b)$-$Box(n\times s)$ game there are $n$ boxes, each with $s$ elements. In every round, BoxMaker claims $a$ elements from the boxes, and BoxBreaker claims $b$ elements from the boxes. BoxBreaker's goal is to claim at least one element from each box by the end of the game. After we analyze the game Box in our setting (and a variant called the $d$-Box game), we show that Maker's graph is indeed w.h.p.\ an expander and then we show that w.h.p.\ it contains a Hamilton cycle.

\subsubsection {Building an expander}

\begin{lemma}\label{lemma:box2}
	Let $n$ be an integer, let $d$ and $\alpha$ be constants and let $A>9d/\alpha$. Then for $b=A\ln \ln n$ w.h.p.\ random-BoxBreaker wins the
	$(1:b)$-random-BoxBreaker $Box(dn\times s)$ game assuming $s=\alpha n$.
\end{lemma}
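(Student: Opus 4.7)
The plan is to fix a time horizon $T_0$ large enough that random-BoxBreaker has whp touched every box by round $T_0$, and then show that no BoxMaker strategy can fully claim any box within those $T_0$ rounds. I would take $T_0 = 2dn\ln n / b$. Over $T_0$ rounds, BoxBreaker makes $bT_0 = 2dn\ln n$ uniform random selections from an unclaimed pool of size $(1-o(1))\alpha dn^2$ throughout (the total claims up to time $T_0$ are only $(b+1)T_0 = O(dn\ln n) = o(n^2)$). For each fixed box $B$ of size $s = \alpha n$, a hypergeometric-type bound as in Lemma~\ref{CheHyper} gives that the probability all of BoxBreaker's selections miss $B$ is $(1+o(1))e^{-bT_0/(dn)} = n^{-2+o(1)}$. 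By linearity and Markov, whp every box has been touched by round $T_0$, so BoxBreaker wins provided BoxMaker has failed to complete any box by then.

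For the second task, I would decompose any BoxMaker strategy into \emph{attempts}, each being a maximal consecutive run of BoxMaker turns in a single box. BoxMaker wins iff some attempt has length exactly $s$ with BoxBreaker missing the target throughout; also, BoxMaker has no incentive to play in a box already touched by BoxBreaker, so without loss he only plays in currently-winnable boxes. A short exchange argument shows that the ``commit-and-switch'' heuristic (commit to one winnable box until it is either fully claimed or touched by BoxBreaker, then jump to a fresh winnable box) essentially upper-bounds any other strategy, since interleaving between boxes only stretches each attempt in wall-clock time and thereby gives BoxBreaker more random chances to land on the target. Under commit-and-switch, each attempt length is distributed as $\min(X,s)$ with $X \sim \mathrm{Geom}(p)$, where $p = 1-(1-s/N)^b \approx b/(dn)$; since $ps = b\alpha/d \gg 1$, the cap is almost never binding and attempts have expected length $\sim dn/b$. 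A standard concentration argument then shows that whp the total number of attempts within $T_0$ rounds is at most $3\ln n$.

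For a single attempt to succeed, BoxBreaker must miss the target box for $s$ consecutive rounds, with probability $(1-p)^s \approx e^{-b\alpha/d} = (\ln n)^{-A\alpha/d}$. A union bound over attempts yields
\[
\Pr[\text{some attempt succeeds within } T_0 \text{ rounds}] \leq 3\ln n \cdot (\ln n)^{-A\alpha/d} = O\!\left((\ln n)^{1 - A\alpha/d}\right) = o(1),
\]
since the hypothesis $A > 9d/\alpha$ gives $A\alpha/d > 9$. Combined with the first step, this proves the lemma. The main obstacle is the clean formalization of why commit-and-switch dominates every BoxMaker strategy; the generous gap between the argumentatively necessary $A\alpha/d > 1$ and the stipulated $A\alpha/d > 9$ is exactly what absorbs the logarithmic losses from this reduction and from the concentration for the number of attempts.
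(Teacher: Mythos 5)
Your overall architecture (cut the game off at a horizon $T_0=\Theta(dn\ln n/b)$, show every box is touched by then, show BoxMaker cannot finish a box by then) matches the paper's, but the two steps carrying the difficulty are not actually established, and both fail as stated against an arbitrary adaptive BoxMaker. First, your bound $(1+o(1))e^{-bT_0/(dn)}$ for missing a fixed box presumes that box keeps $\Theta(s)$ unclaimed elements throughout; ``BoxMaker has not completed any box'' does not give this. He can, within the horizon, bring $\Theta(\ln n/\ln\ln n)$ untouched boxes to $s-o(s)$ (or $s-1$) of his elements; such a box is touched by time $T_0$ only with probability $O(\ln n/n)$, and nothing in your two events prevents him from finishing it after $T_0$ --- recall BoxBreaker must touch every box by the \emph{end} of the game, and the attempt-counting union bound cannot simply be run over the whole game (there can be up to $dn$ attempts in total, far too many against a $(\ln n)^{-A\alpha/d}$ failure probability), so the horizon step must genuinely hold. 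Second, and mainly, the reduction of every BoxMaker strategy to ``commit-and-switch'' is exactly the hard part and is only asserted; a clean exchange/coupling argument against an adaptive opponent in this time-inhomogeneous setting (shrinking pool, changing box contents, and dominance needed for the joint event your proof uses, not just for the completion event) is not routine, and without it the geometric-attempt calculus and the $3\ln n$ bound on the number of attempts have no foundation.

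The paper avoids any strategy reduction with a budget argument you could adopt instead: since BoxMaker claims one element per round, at most $\frac{4dn\ln n/b}{s/3}=O(\ln n/\ln\ln n)$ boxes can ever accumulate $s/3$ of his elements within the horizon, and to complete a box he must keep it untouched for at least $s/3$ further \emph{full rounds} after the first moment it reaches $s/3$ of his elements, during which the box still has at least $s/3$ unclaimed elements; hence each such dangerous box survives those rounds with probability at most $\left(1-\frac{1}{3dn}\right)^{bs/3}\le(\ln n)^{-A\alpha/(9d)}$, and the union bound over the $O(\ln n/\ln\ln n)$ dangerous boxes (this is precisely where $A>9d/\alpha$ enters) together with the easy estimate that any box retaining $s/3$ available elements is touched within $\frac{4dn\ln n}{b}$ rounds finishes the proof against every adaptive strategy at once. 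Your heuristic correctly locates the exponent $(\ln n)^{-A\alpha/d}$ and the $\Theta(\ln n)$-attempts intuition, but as written the proposal has a genuine gap at its central step.
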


\begin{proof} {\bf of Lemma \ref{lemma:box2}}
	We show that w.h.p.\ BoxBreaker wins within $\Theta(\frac {n\ln n}{b})$
	rounds. Specifically, we argue that after $\frac {4dn\ln n}{b}$ rounds, the elements claimed by BoxBreaker w.h.p.\ satisfy two properties that allow BoxBreaker to  win the game.
	
	{\bf Property 1:} First we show that in the first  $\frac {4dn \ln n}{b}$ rounds,
	w.h.p.\ every box that BoxBreaker has not touched, has at least $
	\frac s3$ available elements. Let $A_i$ be the event
	"the $i^{th}$ box to reach $\frac s3$ elements of BoxMaker was not touched by BoxBreaker
	in the next $\frac s3$ rounds``. Now, the probability for BoxBreaker to
	touch some box $i$ with his next element is at least $\frac {s/3}{dns}= \frac
	{1}{3dn}$, Therefore,
	$$\Pr\left(A_i\right)\leq \left(1-\frac {1}{3dn}\right)^{bs/3}\leq e^{-\frac {1}{9dn}bs }=
	e^{-\frac {\alpha}{9d} A\ln \ln n}=\left(\frac 1{\ln n}\right)^{\frac {\alpha}{9d}A}.$$
	Note that the number of events $A_i$ that can occur is at most
	$\frac {4dn\ln n/b}{s/3}=\frac {12d\ln n}{\alpha b}$. Thus, for $A>9d/\alpha$ we
	have that
	$$\Pr\left(\exists i\in[n],\ s.t.\ A_i\right)\leq \frac {12d\ln n}{\alpha b}\cdot \Pr\left(A_i\right)
	\leq \frac {12d\ln n}{\alpha A\ln\ln n}\cdot \frac 1{(\ln n)^{\frac
			{\alpha}{9d}A}}=o(1).$$
	
	{\bf Property 2:} Next we show that w.h.p. BoxBreaker touches every box that had at
	least $\frac s3$ available elements in the first $\frac {4dn\ln n}{b}$
	rounds. We say that a box $i$ is \emph{free} if all the elements in $i$ are either available or belong to BoxMaker.  The probability for BoxBreaker to
	touch the box $i$ with at least $\frac s3$ available elements in a given turn is at least $\frac 1{3dn}$. Therefore, for a given box, the probability that after the first $\frac {4dn\ln n}{b}$ rounds the box has at least $\frac s3$ available elements and was not  touched by BoxMaker is at most 	 $ \left(1-\frac 1{3dn}\right)^{\frac {4dn\ln n}{b}}= o\left(\frac 1{dn}\right)$. Using the union bound over all boxes we have that the probability that after the first $\frac {4dn\ln n}{b}$ rounds there is a box that was not touched by BoxBreaker is $1-o(1)$.
	
	
	All in all, the set of elements belong to BoxBreaker w.h.p.\ satisfies properties 1+2 and thus after $\frac {4dn\ln n}{b}$ rounds (that is $4dn\ln n$
	turns of BoxBreaker), w.h.p.\ BoxBreaker claimed at least one element
	from each box.
\end{proof}

\bigbreak

The following corollary is obtained from Lemma \ref{lemma:box2}.
In this corollary, we study a version of the Box game, the $d$-Box game, where $d$ is a positive integer. This game will be used later for proving Theorem
\ref{thm:RMHam} and Lemma \ref{lemma:buildExpander}. In the game
$d$-$Box(n\times s)$ there are $n$ boxes, each with $s$ elements, and two players,
$d$-Maker and $d$-Breaker. In the $(m:1)$-random-Maker $d$-Box game, $d$-Maker plays randomly and
claims in each round exactly $m$ previously unclaimed elements while $d$-Breaker claims exactly one element. The goal of $d$-Maker is to
claim at least $d$ elements in each box.

\begin{corollary}\label{claim:MinDeg}
	Let $\alpha, d,n>0$ be integers, let $A>9d^2/\alpha$ be a constant and let $m=A\ln \ln n$.  Then w.h.p.\ $d$-Maker wins the $(m:1)$-random-Maker
	$d$-$Box(n\times s)$ game where 
	$s=\alpha n$. Furthermore, in every box there were at least $\frac s2$ available elements  when $d$-Maker claimed $\frac d2$ elements from it. 
\end{corollary}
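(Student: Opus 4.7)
The plan is to deduce Corollary~\ref{claim:MinDeg} from Lemma~\ref{lemma:box2} by the standard sub-box reduction, being careful that the roles of ``random player'' and ``smart player'' swap between the two statements. I would partition each of the $n$ original boxes (each of size $s=\alpha n$) into $d$ equal sub-boxes of size $s/d = (\alpha/d)n$; since touching every sub-box of a given original box already yields $d$ distinct elements there, it suffices to force $d$-Maker to hit each of the resulting $dn$ sub-boxes. In this reformulation, $d$-Maker (random, bias $m$) plays the role of random-BoxBreaker of Lemma~\ref{lemma:box2}, while $d$-Breaker (smart, bias $1$) plays the role of smart BoxMaker. Applying Lemma~\ref{lemma:box2} to the $(1:m)$-random-BoxBreaker Box$(dn\times (\alpha/d)n)$ game, the new parameters are $d'=d$ and $\alpha'=\alpha/d$, so the required hypothesis becomes $A > 9d'/\alpha' = 9d^2/\alpha$, which is precisely the assumption of the corollary. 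This yields the main claim: w.h.p.\ $d$-Maker touches every sub-box, and hence claims at least $d$ elements in every original box, within $\frac{4dn\ln n}{m}$ rounds.

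For the ``furthermore'' statement I would argue via concentration as follows. Fix a box $B$ and let $r_B$ be the first round at which $d$-Breaker's cumulative count in $B$ reaches $s/2 - d/2$; since $d$-Breaker claims one element per round, $r_B \geq s/2 - d/2 = \Omega(n)$. During the first $\frac{4dn\ln n}{m}$ rounds the total number of available elements on the board stays $\Theta(ns)$, so in each single uniform pick of $d$-Maker the probability of hitting $B$ is at least $\Omega(1/n)$ as long as $B$ retains $\Omega(s)$ available elements (which is the case before round $r_B$). Consequently, the number of $d$-Maker's claims in $B$ by round $r_B$ stochastically dominates a hypergeometric variable with mean $\Omega(mr_B/n) = \Omega(m\alpha) = \Omega(A\ln\ln n)$, which is much larger than $d/2$ since $A > 9d^2/\alpha$. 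A hypergeometric Chernoff bound (Lemma~\ref{CheHyper}) together with a union bound over the $n$ boxes shows that w.h.p., for every $B$, $d$-Maker already has at least $d/2$ elements in $B$ strictly before round $r_B$; equivalently, when $d$-Maker claims his $\frac{d}{2}$-th element of $B$, at most $s/2-d/2$ elements of $B$ have been taken by $d$-Breaker, so at least $s/2$ remain available, as required.

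The first paragraph is a routine translation of Lemma~\ref{lemma:box2}; the main (mild) obstacle is the concentration in the second paragraph, where the $m$ picks in a single round are drawn without replacement from a shrinking pool and are therefore not independent. One must use the hypergeometric Chernoff bound rather than the Binomial one, and verify that the probability of hitting $B$ in a single $d$-Maker pick stays of order $1/n$ throughout the first $r_B$ rounds (which is where the uniform lower bound $\Theta(ns)$ on the total number of available elements is used). Combining the two properties gives the full statement of the corollary.
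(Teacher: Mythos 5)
Your first paragraph is exactly the paper's argument: partition each box into $d$ sub-boxes of size $s/d$, view $d$-Maker as random-BoxBreaker in the $Box(dn\times \frac{\alpha n}{d})$ game, and apply Lemma~\ref{lemma:box2} with $d'=d$, $\alpha'=\alpha/d$, giving the condition $A>9d^2/\alpha$. That part is fine.

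The ``furthermore'' part, however, has a genuine gap. Fix a box $B$ and let $r_B$ be as you define it; by round $r_B$ Maker has made about $m\cdot\frac{\alpha n}{2}$ picks, each hitting $B$ with probability $\Theta(1/n)$, so the expected number of his elements in $B$ at that time is only $\Theta(m)=\Theta(\ln\ln n)$. Consequently the per-box failure probability you get from Lemma~\ref{CheHyper} is $e^{-\Theta(\ln\ln n)}=(\ln n)^{-\Theta(1)}$, which is nowhere near $o(1/n)$: the union bound over the $n$ boxes that you invoke gives $n\cdot(\ln n)^{-\Theta(1)}\to\infty$ and proves nothing. (Your aside that the mean exceeds $d/2$ ``since $A>9d^2/\alpha$'' is also not the point -- it exceeds $d/2$ only because $\ln\ln n\to\infty$.) To rescue this route you would have to use that within the $\Theta(\frac{n\ln n}{m})$ rounds of the game Breaker can push at most $O(\frac{\ln n}{\ln\ln n})$ boxes up to $\frac{s}{2}-\frac{d}{2}$ of his elements, and since the set of attacked boxes is chosen adaptively in response to Maker's random choices, even that requires a sequential (stopping-time) argument rather than a plain union bound over a random set. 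The paper avoids all of this: it deduces the ``furthermore'' statement deterministically from the sub-box construction. If at some moment a box held at least $\frac{s}{2}-\frac{d}{2}+1$ Breaker elements while Maker had at most $\frac{d}{2}-1$ elements there, then, since $(\frac{d}{2}-1)(\frac{s}{d}-1)+\frac{s}{d}=\frac{s}{2}-\frac{d}{2}+1$, Breaker's elements in that box would have sufficed to occupy one sub-box entirely and all but one element of $\frac{d}{2}-1$ further sub-boxes, i.e.\ to win the auxiliary $Box(dn\times \frac{s}{d})$ game -- contradicting the w.h.p.\ win of random-BoxBreaker against every strategy that was just established. So the second property comes for free from Lemma~\ref{lemma:box2}, with no additional concentration estimate; your concentration approach, as written, does not close.
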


\begin{proof}
	At the beginning of the game, $d$-Maker partitions each of the $n$
	boxes into $d$ sub-boxes, each of size $\frac {s}{d}$.  Then, for $s=\alpha n$ we have $dn$ boxes each of size $\frac {\alpha n}{d}$. From Lemma \ref{lemma:box2}, playing the random-BoxBreaker $Box(dn\times \frac {\alpha n}{d})$ game, BoxBreaker w.h.p.\ wins when $b\geq A\ln\ln dn$ where $A>9d^2/\alpha$. Therefore for $m\geq A\ln\ln n$ and $A>9d^2/\alpha$ $d$-Maker wins w.h.p.\ the $d$-Box game. Moreover, he typically does so within $\Theta(\frac {n\ln n}m)$ rounds of the game.

	For the second part, we showed that w.h.p.\ $d$-Maker is the winner of the game against any strategy of $d$-Breaker. Now, assume by contradiction that at some point of the game there exists a box $i$ with at least $\frac s2-\frac d2+1$ elements of $d$-Breaker and at most $\frac d2-1$ elements of $d$-Maker. Then, by looking at $d$-Maker's partition  of box $i$ into $d$ sub-boxes, $d$-Breaker could have claimed all the elements (but one) from $\frac d2-1$ sub-boxes and all the $\frac sd$ elements from another sub-box, thus winning the game, which is clearly a contradiction. 
\end{proof}

\medskip
The next step in the proof of Theorem \ref{thm:RMHam} (and also of Theorem \ref{thm:RMCon}) is to show that w.h.p.\ Maker's graph is an expander. For this we need the following lemma.

\begin{lemma}\label{lemma:buildExpander}
	For every positive integer $k$ there exist constants
	$\delta>0$ and $C_1>0$ for which the
	following holds. Suppose that $m'\geq {C_1\ln\ln n}$. Then in the $(m':1)$-random-Maker game played on the edge set of $K_n$, w.h.p.\ after $O(\frac {n\ln n}{m'})$ rounds, Maker's graph is an $(R,2k)$-expander,
	where $R=\delta n$.
\end{lemma}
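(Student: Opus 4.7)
The plan is to establish the $(R, 2k)$-expansion of $M$ by a direct union bound, exploiting that in every round Maker's $m'$ new edges are chosen uniformly at random from the unclaimed pool. Fix a large constant $A = A(k)$, to be chosen at the end, and run the game for $T := \lceil An\ln n/m'\rceil$ rounds, so Maker has claimed $N := m'T = (1+o(1))An\ln n$ edges. Since Breaker adds only one edge per round, the pool of unclaimed edges always has size $(1-o(1))\binom{n}{2}$; consequently, conditioning on the history, Maker's next edge lies in any prescribed set $F \subseteq E(K_n)$ with probability $(1+o(1))|F|/\binom{n}{2}$, and the number of Maker edges in $F$ is stochastically dominated from below by a Binomial random variable with mean $(1-o(1))N|F|/\binom{n}{2}$.

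Now set $R := \delta n$ for a small constant $\delta = \delta(k) \leq \frac{1}{4k+4}$. Failure of $(R,2k)$-expansion is equivalent to the existence of sets $U, S \subseteq V(K_n)$ with $|U| = u \in [1, R]$ and $|S| = 2ku$ such that $N_M(U) \subseteq U \cup S$, i.e., no Maker edge lies in $F_{U,S} := E_{K_n}(U, V \setminus (U \cup S))$. For such a pair the choice of $\delta$ ensures
\[ |F_{U,S}| = u\bigl(n - (2k+1)u\bigr) \geq un/2, \]
so by the Bernoulli coupling above,
\[ \Pr\bigl[N_M(U) \subseteq U \cup S\bigr] \leq \left(1 - (1-o(1))\tfrac{u}{n}\right)^N \leq n^{-(1-o(1))Au}. \]
Combined with the crude count $\binom{n}{u}\binom{n}{2ku} \leq n^{(2k+1)u}$, a union bound over all $(U,S)$ and all $u \in [1, R]$ gives total failure probability at most
\[ \sum_{u=1}^{R} n^{-(A - 2k - 1 - o(1))u}, \]
which is $o(1)$ as soon as $A > 2k+2$. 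Fixing such $A = A(k)$ and $\delta$, and taking $C_1$ to be any positive constant (so that $m' \geq C_1 \ln\ln n$ makes $T = O(n\ln n/m')$ well-defined), completes the argument. Note that the $u = 1$ case of this union bound is itself a minimum-degree statement ($\Delta_M \geq 2k$ at every vertex), so a separate invocation of Corollary~\ref{claim:MinDeg} is not strictly needed.

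I expect the main technical point to be the rigorous justification of the Bernoulli coupling in the first paragraph, i.e., quantifying the drift between Maker's sequential, without-replacement draws (interleaved with Breaker's moves) and $N$ i.i.d.\ uniform draws. Since both players consume only $o(n^2)$ edges during the $T$ rounds, the shrinkage of the unclaimed pool contributes only a $(1+o(1))$ multiplicative correction to each hit-probability, which is harmless for the estimates above. An equally clean route, bypassing the coupling, is to apply the hypergeometric Chernoff bound (Lemma~\ref{CheHyper}) directly to the count of Maker's edges in $F_{U,S}$, whose mean is at least $(1-o(1))Au\ln n$, and to use the standard tail estimate $\Pr[X = 0] \leq e^{-\mu/2}$.
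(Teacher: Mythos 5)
There is a genuine gap, and it sits exactly at the step you flagged as ``the main technical point.'' Your coupling asserts that, conditionally on the history, Maker's next edge lands in a prescribed set $F$ with probability $(1+o(1))|F|/\binom n2$, because the unclaimed pool has size $(1-o(1))\binom n2$. But that only controls the denominator. The conditional probability is $|F\cap\text{unclaimed}|/|\text{unclaimed}|$, and Breaker is \emph{adversarial}: over the $T=\Theta(n\ln n/m')=\omega(n)$ rounds he claims $\omega(n)$ edges and may concentrate them on the very sets $F_{U,S}$ you need to hit. For $u=1$ the set $F_{U,S}$ is essentially the star of a single vertex ($\approx n$ edges), and Breaker's budget suffices to swallow the stars of $\Theta(\ln n/\ln\ln n)$ vertices one after another; more generally, for all $u=O(\ln n/\ln\ln n)$ he can make $|F_{U,S}\cap\text{unclaimed}|$ far smaller than $|F_{U,S}|$ for some specific $U$. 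So neither the Binomial domination nor the hypergeometric alternative at the end of your proposal is justified: Maker's edges are not (even approximately) a uniform sample from $E(K_n)$, because the pool is being depleted adaptively in a way correlated with the target sets. A decisive sanity check is that your argument never uses the size of $C_1$ (``taking $C_1$ to be any positive constant''), whereas by Theorem~\ref{thm:RMIsoVertex} the statement is false in spirit for $C_1<\tfrac12$: Breaker can then isolate a vertex within $O(n\ln n/m')$ rounds, so Maker's graph is not an $(R,2k)$-expander. Any correct proof must win a quantitative race against Breaker's vertex-targeting, and that race is precisely where the largeness of $C_1$ enters.

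This is what the paper's machinery is for. It assigns to each vertex $v$ a box $A_v$ of $\approx n/2$ edges via a tournament orientation and plays the $d$-Box game (Lemma~\ref{lemma:box2}, Corollary~\ref{claim:MinDeg}) to guarantee that w.h.p.\ Maker acquires $d=16k$ edges ``chosen by'' every vertex, and moreover that when the first $d/2$ such edges at $v$ were chosen, at least $n/4$ edges of $A_v$ were still unclaimed. Only with that guarantee in hand does the union bound become legitimate: each edge chosen by a vertex of a putative non-expanding set $A$ lands in $A\cup N$ with conditional probability at most $\frac{|A\cup N|-1}{n/4}$, regardless of how Breaker has played, and the counting over pairs $(A,N)$ then closes the argument. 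Your union-bound skeleton for the ``large $u$'' regime is fine in outline, but without a per-vertex availability guarantee of this kind (or some other mechanism that neutralizes Breaker's ability to target small stars), the key probability estimate does not hold, and the small-$u$ cases --- including the $u=1$ minimum-degree case you propose to absorb into the union bound --- are exactly where it breaks.
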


\begin{proof}
	Let $d=16k$ and let $C_1>20d^2$.
	At the beginning of the game, Maker assigns edges of $K_n$ to
	vertices so that each vertex gets about $\frac n2$ edges
	incident to it. To do so, let $D_n$ be any tournament on $n$
	vertices such that for every vertex $v\in V$, $|N^+(v)|=|N^-(v)|\pm
	1$ if $n$ is even and $|N^+(v)|=|N^-(v)|$ if $n$ is odd. For each
	vertex $v\in V(D_n)$, define $A_v=E(v,N^+(v))$. Note that
	for every $v\in V(K_n)$ we have that
	$|A_v|=\lfloor\frac{n-1}2\rfloor$ or $|A_v|=\lceil\frac{n-1}2\rceil$
	and that all the $A_v$'s are pairwise disjoint. For the sake of simplicity, during the proof we sometimes omit floor
	and ceiling signs whenever these are not crucial.
	
	Now, note that if a graph $G$ is an $(R,2k)$-expander, then $G\cup\{e\}$ is
	also an $(R,2k)$-expander for every  edge $e\in E(K_n)$. Therefore,
	claiming extra edges can not harm Maker in his goal of creating an
	expander and we can assume $m'= {C_1n\ln n}$ (if $m'$ is larger
	then it is only in favor of Maker).
	
	Our goal is to show that w.h.p.\ Maker's graph, after $\Theta(n\ln n)$ turns of the game, is  an $(R,2k)$-expander.
	Here, we think of Maker
	as $d$-Maker in the game $d$-$Box$ with
	appropriate parameters, where the boxes are $\{A_v:v\in V(K_n)\}$. Since $d$-Maker is w.h.p.\ the winner of the $d$-$Box$ game, we will prove that Maker also wins this game.
	
	Consider a game $d$-$Box(n\times s)$ for $d=16k$ and $s=\frac {n-1}2$.
	
	Since $|A_v|= \frac {n-1}2$, by Corollary~\ref{claim:MinDeg} we have that for $m'>20d^2\ln\ln n$ w.h.p.\ $d$-Maker wins the $d$-$Box$ game after at most $\Theta(n\ln n)$ turns of the game. Furthermore, for every box $A_v$, if Maker claimed so far less than $8k$ elements from  $A_v$, then there are at least $\frac n4$ available elements in the box. 
	  For some edge $e$ in Maker's graph, we say that $e=\{u,v\}$
	  was \emph{chosen} by the vertex $v$ if $e\in A_v$.

	According to the setting of the game, in every round Maker claims $C_1 \ln\ln n$ available elements randomly. This can be viewed as follows.
	 In every turn we can think of Maker as choosing first a vertex $v$ with an appropriate probability ( $\frac {f(v,i)}{\sum_{u\in [n]}f(u,i)}$, where  $f(v,i)$ is the number of available elements in $A_v$ just before the $i^{th}$ turn of the game) and then choosing a random element from $A_v$, rather than choosing a random available edge from the graph.

	We now prove that Maker's graph is w.h.p.\ an $(R,2k)$-expander.
	Let $M^*$ be Maker's graph after Maker, as $d$-Maker, wins the $d$-Box game. First, we look in a sub-graph of $M^*$ obtained in the following way. For every set $A_v$ we consider only the first $\frac d2$ elements claimed by $d$-Maker. Then, we look at the graph $L\subseteq M^*$ formed by these edges. It is evident that if $L$ is an $(R,2k)$-expander then $M^*$ is also an $(R,2k)$-expander. Therefore, it is enough to prove now that w.h.p.\ $L$ is an $(R,2k)$-expander. 
	Suppose that the graph $L$ is not an
	$(R,2k)$-expander, then there is a vertex subset $A$, $|A|=a\leq R$, in
	the graph $L$, such that $N_L(A)\subseteq N$, where $|N|=2ka-1$.
	Since Maker claimed from each $A_v$ at least $8k$ edges and $k\geq 1$, we
	can assume that $a\geq 5$ and that there are at least $4ka$ of Maker's
	edges incident to $A$, all of them chosen by vertices from A and all went into $A\cup N$. 
	Assume that at some point
	during the game Maker chose an edge with one vertex $v\in A\cap A_v$ and
	whose second endpoint is in $A\cup N$. This means that there are at least $\frac n4$ available elements in the box
	$A_v$ and therefore at that point of the game, there
	are at least $\frac n4$ unclaimed edges incident to $v$. The
	probability that at that point Maker chose an edge at $v$ whose
	second endpoint belongs to $A\cup N$ is thus at most $\frac{|A\cup
		N|-1}{n/4}$. It follows that the probability that there are at least
	$4ka$ edges chosen by vertices of $A$ that end up in $A\cup N$ is at
	most $\left(\frac {(2k+1)a-2}{n/4}\right)^{4ka}$. 
Thus, the probability that there are at least $4ka$ 
	edges between $A$ and $A\cup N$ is at most
			$\left(\frac {(2k+1)a-2}{n/4}\right)^{4ka} <\left(\frac{12ka}{n}\right)^{4ka}$.
	Therefore the probability that there is such a pair of sets $A,N$ as
	above is at most
	\begin{eqnarray*}
		\sum_{a=5}^{R}\binom{n}{a}\binom{n-a}{2ka-1}\left(\frac
		{12ka}{n}\right)^{4ka}&\leq&
		\sum_{a=5}^{R}\left(\frac{ne}{a}\left(\frac{ne}{2ka}\right)^{2k}\left(\frac
		{12ka}{n}\right)^{4k}\right)^a\\
		&=& \sum_{a=5}^{R}\left(\frac{e^{2k+1}k^{2k}12^{4k}}{2^{2k}}\left(\frac{a}{n}\right)^{2k-1}\right)^a=o(1).\nonumber\\
	\end{eqnarray*}
	The last equality is due to the fact that for $5\leq a\leq \sqrt{n}$
	\[
			\left(\frac{e^{2k+1}k^{2k}12^{4k}}{2^{2k}}\left(\frac{a}{n}\right)^{2k-1}\right)^a
			= \left(\Theta\left(\frac {1}{n^{k-0.5}}\right)\right)^5
			= o\left(\frac1n\right),\]
	
	and for $\sqrt{n}\leq a\leq R$ with $R=\delta n$ where
	$\delta<\left(12ke\right)^{-6}$ is a constant,
	\begin{eqnarray*}
		\left(\frac{e^{2k+1}k^{2k}12^{4k}}{2^{2k}}\left(\frac{a}{n}\right)^{2k-1}\right)^a  &\leq&
		\left(\frac{e^{2k+1}k^{2k}12^{4k}}{2^{2k}}\left(\frac{R}{n}\right)^{2k-1}\right)^{\sqrt{n}}\\
		&=& \left(\frac{e^{2k+1}k^{2k}12^{4k}}{2^{2k}}\cdot\delta^{2k-1}\right)^{\sqrt{n}}\\
		&<& \left({e^{2k+1}k^{2k}12^{4k}}\cdot(12ke)^{-12k+6}\right)^{\sqrt{n}}\\
		&=& o\left(\frac1n\right).
	\end{eqnarray*}
	It follows that Maker is
	able to create an $(R,2k)$-expander w.h.p. in at most $\Theta(\frac {n\ln n}m)$ turns of the game.
\end{proof}

\bigbreak

Using Lemma~\ref{lemma:buildExpander}, we now show that for $m\geq An\ln n$, playing on the edge set of $K_n$, Maker's graph by the end of the random-Maker game contains w.h.p.\ a Hamilton
cycle.

\begin{proof} {\bf of Theorem \ref{thm:RMHam}}
	Let  $d=16$. Let $\delta=\left(13e\right)^{-6}$
	and let $A=21d^2$.
	
 We divide the game into three parts and in each part we show that w.h.p.\ Maker's graph satisfying some ``good" properties:
	
	{\bf Part I -- the graph M is an expander:} Using Lemma \ref{lemma:buildExpander} for  $k=1$ and $R=\delta n$,
	Maker's graph is w.h.p.\ an $R$-expander before both players claimed in total
	$\Theta(n\ln n)$ edges from the graph.

	{\bf Part II -- the graph M is a connected expander:} Denote Maker's graph from Stage I by $M_1$. Then $M_1$ is w.h.p.\ an  $R$-expander, and
	by Lemma \ref{lemma:component} the size of every connected component
	of $M_1$ is w.h.p.\ at least
	$3R$. It follows that there are at most $\frac {n}{3R}=\frac1{3\delta}=:C$ connected components in $M_1$. In the next $2\ln n$ turns the game, Maker's graph will become a connected $R$-expander. Observe that there are at
	least $(3R)^2=9\delta^2n^2$ edges of $K_n$ between any two such
	components.  Also,  in the next $2\ln n$ turns of the game, the number of edges claimed by both  players (together with Stage I) is  $\Theta(n\ln n)$ and therefore the number of available edges between any two connected components during this stage is at least $9\delta^2n^2-\Theta(n\ln n)=\Theta(n^2)$. Denote by $E_1,\dots ,E_{\binom C2}$ the sets of edges between each two connected components.  Maker's goal now is to claim at least one edge from each such set of edges in the next $2\ln n$ turns.  Denote by $\tau$ the number of Maker's turns until he touches every set at least once. The probability of Maker to claim an available edge from some set $E_j$ is at least $\frac {9\delta^2n^2-\Theta(n\ln n)}{n^2}\geq 8\delta^2$ and therefore 
	$$\Pr(\tau>\ln n)\leq \sum_{i=1}^{\binom C2} \left(1-8\delta^2\right)^{\ln n}\leq \binom C2\cdot e^{-8\delta^2\ln n}=o(1).$$
	
	Thus, in the next $2\ln n $ turns of the game, Maker w.h.p.\ is able to claim at least one edge from each set and to make his graph connected. We denote the new graph Maker
	created by $M_2$. It is evident that $M_2$ is still an $R$-expander.
	
	{\bf Part III -- completing a Hamilton cycle:} If $M_2$ contains a
	Hamilton cycle, then we are done. Otherwise, by Lemma
	\ref{lemma:boosters}, $E(K_n)\setminus M_2$ contains at least $\frac {(R+1)^2}2$
	boosters. Observe that after adding a booster, the current graph is
	still an $R$-expander and therefore also contains at least $\frac
	{(R+1)^2}2$ boosters. Clearly, after adding at most $n$ boosters,
	$M_2$ becomes Hamiltonian. We show now that in this stage w.h.p., Maker
	reaches his goal after   $\Theta(n)$ turns of
	the game.
	
	In the next $\frac
	{4n}{\delta^2}$ turns of  Maker, he claims a {\bf random} unclaimed edge  from
	the graph. We are now looking for the probability for such edge to be a booster. There are at most  $\frac{n^2}2-\frac n2$
	unclaimed edges, and according to Lemma \ref{lemma:boosters} there are  at least $\frac{(\delta n+1)^2}{2}$
	boosters in $M_2$. Since by the end of the game Breaker claimed at most $\Theta(n\ln n)$ edges, the number of available boosters after each turn of Maker at any point of this stage is at least $\frac{(\delta n+1)^2}{2}-|E(B)|=\frac{(\delta n+1)^2}{2}-\Theta(n\ln n)>(\delta n/2)^2$. Therefore, in each turn of Maker, until he was able to claim $n$ boosters,
	the  probability for Maker to claim a booster is at least
	$$ \frac {(\delta n/2)^2}{\binom{n}2} \geq \frac {\delta^2}3.$$
	Let $Y$ be the number of boosters Maker claimed in $\frac
	{4n}{\delta^2}$ turns. Then by Lemma
	\ref{Che}, $$\Pr(Y<n)\leq\Pr\left(\Bin(\frac
	{4n}{\delta^2},\frac {\delta^2}3)<n\right) \leq e^{-\left(\frac 14\right)^2\frac43
		n}=o\left(1\right).$$
	Thus in the next $\frac {4n}{\delta^2}$
	turns of Maker he is able w.h.p. to claim at least $n$
	boosters. All in all, in the next $\Theta(n\ln n)$ turns of the game, Maker was  able (w.h.p.) to claim $n$
	boosters and thus Maker's graph in Hamiltonian.
	
\end{proof}

\subsection{Random-Maker $k$-connectivity game}
In this section we prove Theorem~\ref{thm:RMCon}. In order to do so, we first prove that w.h.p.\ also in this game Maker's graph is a good expander (by using Lemma \ref{lemma:buildExpander}) and then show that in $n\ln n$ further turns it becomes $k$-connected.

\begin{proof} Let  $d=16k$, $\delta=(13ke)^{-6}$ and let $A=\max\{21d^2,\frac {9-6\ln \delta}{\delta}\}$. We divide the game into two parts and in each part we show that w.h.p.\ Maker's graphs satisfy some ``good" properties:
	
	{\bf Part I:} Using Lemma \ref{lemma:buildExpander} for $R=\delta n$,
	Maker's graph is w.h.p.\ an $(R,2k)$-expander before both players claimed in total
	$\Theta(n\ln n)$ edges in the graph.

	{\bf Part II:} Maker  makes his graph a $(\frac
	{n+k}{4k},2k)$-expander in $n\ln n$ further turns of
	the game. During the next $n\ln n$ turns of the game, Maker played more than
	$An$ turns for $A\geq \frac {9-6\ln \delta}{\delta}$. It remains to prove that if
	Maker claims $An$ edges randomly, then w.h.p. Maker's graph is a
	$(\frac {n+k}{4k},2k)$-expander. It is enough to prove that
	$E_M(U,W)\neq \emptyset$ for every two subsets $U,W\subseteq V$,
	such that $|U|=|W|=R$. Indeed, if there exists a subset $X\subseteq
	V$ of size $R\leq |X| \leq \frac {n+k}{4k}$ such that $|X\cup
	N_M(X)|<(2k+1)|X|$, then there are two subsets $U\subseteq X$ and
	$W\subseteq V\setminus (X\cup N_M(X))$ such that $|U|=|W|=R$ and
	$E_M(U,W)=\emptyset$. We now prove that w.h.p., $E_M(U,W)\neq
	\emptyset$ for every $|U|=|W|=R$ after $An$ turns of Maker.  Let
	$U,W$ be two subsets such that $|U|=|W|=R$. Recall that in the entire
	game, both players claim at most $\Theta(n\ln n)$ edges.  Thus
	the number of available edges between $U$ and $W$ at any point throughout
	this stage is at least $|U||W|-\Theta(n\ln n)>\frac
	{\delta^2n^2}3$ for a large $n$. Then the probability that Maker
	claims an edge $e\in E(U,W)\setminus E_B(U,W)$ is at least $\frac
	{{\delta^2n^2}/3}{\binom n2}\geq \frac{\delta^2}3$. So at the end of
	Stage II,
	$$\Pr\left(E_M(U,W)=\emptyset\right)=\Pr\left(E(U,W)\setminus E_B(U,W)=\emptyset\right)\leq \left(1-\frac {\delta^2}3\right)^{An}.$$
	Using the
	union bound, we get that the probability that there exist two
	subsets $U,W$, $|U|=|W|=R$ such that $E_M(U,W)=\emptyset$ is at most
	
	\begin{eqnarray*}
		\binom {n}{\delta n} \binom {n}{\delta n}\left(1-\frac{\delta^2}3\right)^{An}&\leq&  \left(\frac{en}{\delta n} \right)^{\delta n}\left(\frac{en}{\delta n} \right)^{\delta n}\left(1-\frac{\delta^2}3\right)^{An}\\
		&\leq& \left(\frac{e}{\delta } \right)^{2\delta n}\left(1-\frac{\delta^2}3\right)^{An}\\
		&\leq& e^{2\delta n-2\delta n\ln {\delta}-\delta^2 An/3}\\
		&=&o(1).
	\end{eqnarray*}
	
	Then w.h.p.\ by Lemma \ref{lemma:connectivity}, since
	$(\frac{n+k}{4k})\cdot 2k\geq \frac 12(|V|+k)$, Maker's graph  is
	$k$-connected and he wins the game.
\end{proof}

\section{Concluding remarks and open problems}

\begin{enumerate}[$(1)$]
	\item 
	In this paper we studied the random-player Maker-Breaker games such as the Hamiltonicity game, the perfect-matching game and the $k$-vertex-connectivity game. In the random-Breaker version, we were able to give asymptotically tight results for the value of the critical bias of the games. We actually proved that the critical bias for these random-Breaker games is asymptotically the maximal value of $b$ that allows Maker's graph to satisfy the desired property. Namely, we proved that for every $\varepsilon>0$ if $b\leq (1-\varepsilon)\frac n2$ then Maker typically wins the random-Breaker Hamiltonicity game, if $b\leq (1-\varepsilon)n$ then Maker typically wins the random-Breaker perfect-matching game and if $b\leq (1-\varepsilon)\frac nk$ then Maker typically wins the random-Breaker $k$-connectivity game.  However, in the random-Maker version, we were only able to determine  the correct order of magnitude  for the critical bias $m^*$. In the random-Maker versions of the above games we proved that $m^*=\Theta(\ln\ln n) $, that is, the maximal value of $m$ that allows Breaker to be the typical winner of the game is of order $\ln\ln n$. One can try to find the exact multiplicative constant in this term.
	\item Note that we only considered random-player  Maker-Breaker games played on $E(K_n)$, and 
	there is still nothing known about random-player games played on different boards. Thus, it would be interesting to study the critical bias for random-player Maker-Breaker games played on different boards, for example, games played on the edge set of some general graph $G$, games played on the edge set of a random graph, games played on the edge set of a hypergraph, etc.
	
	\item With a bit more careful implementation of the same arguments one can also take $\varepsilon=\varepsilon(n)$ to be a concrete vanishing function of $n$, but we decided not to pursue this goal here.
 
\end{enumerate}

 \noindent {\bf Acknowledgment.} The authors would like to thank the referees of the paper for their careful reading and many helpful remarks.


\begin{thebibliography}{99}

\bibitem{AloSpe2008}
N.\ Alon and J. H. Spencer, {\bf The Probabilistic Method}, $3^{rd}$ ed. Wiley,
New York, 2008.

\bibitem{Bol}
B.\ Bollob\'as, {\bf Random graphs}, $2^{nd}$ ed. Cambridge University
Press, 2001.

\bibitem{Beck}
J.\ Beck, {\bf Combinatorial Games: Tic-Tac-Toe Theory}, Cambridge
University Press, 2008.

\bibitem{BL}
M.\ Bednarska  and T.\  \L uczak. \emph{Biased positional games for
which random strategies are nearly optimal}, Combinatorica 20
(2000), 477--488.

\bibitem{BFHK}
S. Ben-Shimon,  A. Ferber, D. Hefetz and M. Krivelevich, \emph{ Hitting time results for Maker-Breaker games}, Random Structures \& Algorithms 41 (2012), 23--46.

\bibitem{CE}
V. Chv\'atal and P. Erd\H{o}s, \emph{Biased positional games},
Annals of Discrete Math. 2 (1978), 221--228.

\bibitem{CFGHL}
D. Clemens, A. Ferber, R. Glebov, D. Hefetz and A. Liebenau,
\emph{Building spanning trees quickly in Maker-Breaker games}, SIAM Journal on Discrete Mathematics, to appear.

\bibitem{CFKL}
D. Clemens, A. Ferber, M. Krivelevich and A. Liebenau, \emph{Fast Strategies In Maker–Breaker Games Played on Random Boards.} Combinatorics, Probability and Computing 21 (2012), 897--915.
%
%
%
%
\bibitem{FH}
A.~Ferber and D.~Hefetz.
\newblock \emph{Weak and strong $k$-connectivity game}.
European Journal of Combinatorics 35 (2014), 169--183.

\bibitem{RandomTurn}
A. Ferber, M. Krivelevich and G. Kronenberg. \emph{Efficient winning strategies in random-turn Maker-Breaker games}. submitted. arXiv:1408.5684.

%

\bibitem{GS}
H. Gebauer and T. Szab\'o, {\em Asymptotic Random Graph Intuition
for the Biased Connectivity Game}, Random Structures $\&$
Algorithms 35 (2009), 431--443.
%

\bibitem{HKSS2009b}
D.~Hefetz, M.~Krivelevich, M.~Stojakovi\'{c}, and T.~Szab\'{o}.
\newblock Fast winning strategies in {M}aker-{B}reaker games.
\newblock {\em Journal of Combinatorial Theory Series {B}}, 99 (2009), 39--47.

\bibitem{PG}
D.~Hefetz, M.~Krivelevich, M.~Stojakovi\'{c} and T.~Szab\'{o}, {\bf
	Positional Games (Oberwolfach Seminars)}, Birkh\"auser, 2014.


\bibitem{JLR}
 S. Janson, T. \L uczak and A. Rucinski. {\bf Random graphs}. John Wiley $\&$
Sons, 2000.
%
%
\bibitem{Ham}
M. Krivelevich, {\em The critical bias for the Hamiltonicity game is
$(1+ o(1)) n/\ln n$}, Journal of the American Mathematical Society
24 (2011), 125--131.

\bibitem{Nash}
H. W. Kuhn, (Ed.)  {\bf �The� Essential John Nash}.
Princeton University Press, 2002.


\bibitem{Lehman}
A.~Lehman.
\newblock {\em A solution of the {S}hannon switching game}.
\newblock Journal of the Society for Industrial and Applied Mathematics,
  12 (1964), 687--725.


\bibitem{PSSW}
Y. Peres, O. Schramm, S. Sheffield and D. B. Wilson,  
{\em Random-turn hex and other selection games. American Mathematical}
Monthly, 114 (2007), 373--387.


\bibitem{West}
D.\ B.\ West, {\bf Introduction to Graph Theory}, $2^{nd}$ ed. Prentice Hall,
2001.

\end{thebibliography}
\end{document}